\definecolor{midpurple}{rgb}{0.6,0.2,0.4}
\numberwithin{equation}{section}
\newcommand{\R}{\mathcal{R}}
\newcommand{\V}{\mathcal{V}}
\newcommand{\F}{\mathcal{F}}
\newcommand{\PP}{\Bbb P}
\newcommand{\Z}{\Bbb Z}
\newcommand{\RR}{\Bbb R}
\newcommand{\TT}{\mathbb{ T} }
\newcommand{\QQ}{\Bbb Q}
\newcommand{\C}{\Bbb C}
\newcommand{\Ca}{\mathcal{C}}
\newtheorem{theorem}{Theorem}
\newtheorem{definition}[theorem]{Definition}
\newtheorem{proposition}[theorem]{Proposition}
\newtheorem{lemma}[theorem]{Lemma}
\newtheorem{remark}[theorem]{Remark}
\DeclareMathOperator{\sgn}{sgn}
\DeclareMathOperator{\supp}{supp}
\DeclareMathOperator{\diam}{diam}
\DeclareMathOperator{\N}{\mathbb{N}}
\numberwithin{theorem}{section}
\begin{document}
\title[Discrete Carleson Along the Primes]{A Discrete Carleson Theorem Along the Primes with a Restricted Supremum}
%Enter your title between curly braces
\author{Laura Cladek}
\address{
Department of Mathematics
The University of Wisconsin-Madison \\
480 Lincoln Dr.
Madison, WI 53706-1325}
\email{cladek@math.wisc.edu}
\author{Kevin Henriot}
\address{
Department of Mathematics
The University of British Columbia \\
1984 Mathematics Road
Vancouver, B.C.
Canada V6T 1Z2}
\email{khenriot@math.ubc.ca}
\author{Ben Krause}
\address{
Department of Mathematics
The University of British Columbia \\
1984 Mathematics Road
Vancouver, B.C.
Canada V6T 1Z2}
\email{benkrause@math.ubc.ca}
\author{Izabella $\L$aba}
\address{
Department of Mathematics
The University of British Columbia \\
1984 Mathematics Road
Vancouver, B.C.
Canada V6T 1Z2}
\email{ilaba@math.ubc.ca}
\author{Malabika Pramanik}
\address{
Department of Mathematics
The University of British Columbia \\
1984 Mathematics Road
Vancouver, B.C.
Canada V6T 1Z2}
\email{malabika@math.ubc.ca}

\date{\today}
\maketitle

\begin{abstract}
Consider the discrete maximal function acting on finitely supported functions on the integers,
\[ \Ca_\Lambda f(n) := \sup_{\lambda \in \Lambda} \left| \sum_{p \in \pm \PP} f(n-p) \log |p| \frac{e^{2\pi i \lambda p}}{p} \right|,\]
where $\pm \PP := \{ \pm p : p \text{ is a prime} \}$, and $\Lambda \subset [0,1]$. We give sufficient conditions on $\Lambda$, met by (finite unions of) lacunary sets, for this to be a bounded sublinear operator on $\ell^p(\Z)$ for $\frac{3}{2} < p < 4$. This result has as its precedent the work of Bourgain and Wierdl on the discrete maximal function along the primes,
\[ \sup_{N} \left| \frac{1}{N} \sum_{p \in \PP, p \leq N} f(n-p) \log p \right|,\]
and Mirek and Trojan on maximally truncated singular integrals along the primes,
\[ \sup_{N} \left| \sum_{p \in \pm \PP, |p| \leq N} f(n-p) \frac{\log |p|}{p} \right|,\]
as well as the most recent work of Mirek, Trojan, and Zorin-Kranich, studying the larger variational variants of the above maximal operators.

The proof relies on Bourgain's elegant ``multi-frequency logarithmic lemma", developed in his study of pointwise ergodic theorems along polynomial orbits, combined with the results of Hyt\"{o}nen and Lacey on a vector-valued version of Carleson's theorem, and those of Oberlin, Seeger, Tao, Thiele, and Wright on a variational version of Carleson's theorem.
\end{abstract}

\section{Introduction}
Modern discrete harmonic analysis began in the late 80s/early 90s, when Bourgain proved his celebrated pointwise ergodic theorems along polynomial orbits \cite{B0, B, B1}; a key ingredient in the proof of Bourgain's polynomial ergodic theorems were the quantitative estimates
\[ \| \sup_N \left| \frac{1}{N} \sum_{n\leq N } f(x - P(n)) \right| \|_{\ell^p(\Z)} \leq C_{p,P} \| f\|_{\ell^p(\Z)}, \ P \in \Z[-], \ 1<p \leq \infty\]
for some absolute constants $C_{p,P} >0$.

The analogous result for averages along the primes was established by Bourgain \cite{B'} and Wierdl in \cite{W}:
\[ \| \sup_N \left| \frac{1}{N} \sum_{p \in \PP, p \leq N} f(x- p) \log p \right|  \|_{\ell^p(\Z)} \leq C_p \| f \|_{\ell^p(\Z)}, \ 1< p \leq \infty\]
for some absolute constant $C_p > 0$. Here, $\PP := \{ p : p \text{ is a prime} \}$.

Since Bourgain's and Wierdl's work on ``arithmetic" ergodic theorems, much work has been devoted to studying discrete maximal functions and discrete singular integrals. For some foundational papers in this direction, we refer the reader to \cite{IW, MSW, SW1, SW2} and to the references contained therein.

In 2013, Mirek and Trojan in \cite{MT} began a study of maximally truncated discrete ``arithmetic" singular integral operators, by proving an analogue of Cotlar's ergodic theorem along the primes. The key quantitative result of their paper is the following:

\begin{theorem}[Theorem 2 of \cite{MT}]\label{cot}
The maximal function
\[ \sup_N \left| \sum_{p \in \pm \PP, |p| \leq N} f(x-p) \frac{\log|p|}{p} \right| \]
is $\ell^p(\Z)$ bounded for each $1<p<\infty$.
\end{theorem}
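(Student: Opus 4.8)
The plan is to run the standard circle-method machinery for discrete ``arithmetic'' singular integrals: pass to the von Mangoldt function, compare the resulting Fourier multipliers against a continuous model on major arcs, transfer the continuous maximally truncated Hilbert transform bound to $\Z$ via a sampling principle, and absorb the full range of admissible denominators using the Ionescu--Wainger multiplier theorem. First I would replace the prime sum, truncated at $|p|\le N$, by $T_N f(x) := \sum_{1\le|n|\le N} f(x-n)\,\Lambda(|n|)\,\tfrac1n$, where $\Lambda$ is the von Mangoldt function. The difference is supported on genuine prime powers $p^k$, $k\ge2$, and since $\sum_{p,\,k\ge2}\tfrac{\log p}{p^k}<\infty$ the associated maximal operator is dominated by convolution with a fixed $\ell^1(\Z)$ kernel, hence bounded on every $\ell^p$. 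Up to a standard comparison between the full supremum and its dyadic restriction (using the odd symmetry of the kernel $\tfrac1n$ and Mertens' theorem to bound the short sums between consecutive scales), it then suffices to bound $\sup_j|T_{2^j}f|$ on $\ell^p(\Z)$ for $1<p<\infty$.

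\textbf{Circle method.} On the frequency side $\widehat{T_{2^j}}(\xi)=\sum_{1\le|n|\le 2^j}\Lambda(|n|)\tfrac1n e(-n\xi)$, and I would decompose $\TT=[0,1)$ by Dirichlet's approximation theorem. On the major arcs around $a/q$ with $q$ small, the Siegel--Walfisz theorem for primes in arithmetic progressions gives
\[
\widehat{T_{2^j}}(a/q+\beta) = \frac{\mu(q)}{\varphi(q)}\,\Phi_{2^j}(\beta) + (\text{error}),\qquad \Phi_{2^j}(\beta):=\sum_{1\le|n|\le 2^j}\tfrac1n e(-n\beta),
\]
where $\Phi_{2^j}(\beta)$ is, up to a smooth $O\!\bigl(\min(2^j|\beta|,(2^j|\beta|)^{-1})\bigr)$ correction, the symbol $-i\pi\,\sgn(\beta)$ of the truncated Hilbert transform at scale $2^j$; on the minor arcs Vinogradov's bilinear estimate gives $|\widehat{T_{2^j}}(\xi)|\lesssim_D(\log 2^j)^{-D}$ for every $D$. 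Accordingly I would write $T_{2^j}=M_{2^j}+E_{2^j}$, where $M_{2^j}$ has multiplier
\[
\sum_{q\le Q_j}\frac{\mu(q)}{\varphi(q)}\sum_{\substack{a\bmod q\\(a,q)=1}}\chi\bigl(q^2(\xi-a/q)\bigr)\,\Phi_{2^j}(\xi-a/q)
\]
for a fixed bump $\chi$ and $Q_j=\exp\bigl((\log 2^j)^c\bigr)$ the Ionescu--Wainger range.

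\textbf{Main term and error term.} For $\sup_j|M_{2^j}f|$, decompose $\Phi_{2^j}=\Phi_\infty+(\Phi_{2^j}-\Phi_\infty)$. The $\Phi_\infty$ part produces a single, scale-independent multiplier of arithmetic singular-integral type, bounded on $\ell^p(\Z)$ for $1<p<\infty$ by the Ionescu--Wainger multiplier theorem combined with $L^p(\RR)$-boundedness of the Hilbert transform. For the smooth tail $\Phi_{2^j}-\Phi_\infty$, the Magyar--Stein--Wainger sampling principle reduces the maximal function over $j$ to the classical $L^p(\RR)$ bound for the maximally truncated Hilbert transform $\sup_{R>0}\bigl|\int_{|s|<R}g(x-s)\tfrac{ds}{s}\bigr|$, with the arithmetic weights $\mu(q)/\varphi(q)$ and the full denominator range $q\le Q_j$ again handled by Ionescu--Wainger, and a Rademacher--Menshov square-function step upgrading the pointwise supremum to an $\ell^p$ bound. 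For $\sup_j|E_{2^j}f|$ the minor-arc decay $(\log 2^j)^{-D}\sim j^{-D}$ is summable on $\ell^2$, while $\|E_{2^j}\|_{\ell^1\to\ell^1}\lesssim\log 2^j\sim j$ by Mertens; interpolating these two bounds scale-by-scale gives $\|E_{2^j}\|_{\ell^p\to\ell^p}\lesssim_p j^{-\kappa(p)}$ with $\kappa(p)>1$ once $D$ is chosen large in terms of $p$, so $\sup_j|E_{2^j}f|\le\sum_j|E_{2^j}f|$ is controlled on $\ell^p$.

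\textbf{Main obstacle.} The essential difficulty is not the $\ell^2$ theory, where Plancherel together with the exponential-sum input does everything, but the simultaneous passage to $\ell^p$ for $p\ne2$ and to the supremum over scales. This forces one to invoke the Ionescu--Wainger multiplier theorem in a form compatible with maximal functions, so that the ``middle'' denominator range $(\log N)^D<q\le\exp((\log N)^c)$ -- invisible to Siegel--Walfisz -- is still captured, to use the sampling principle to import the continuous Lebesgue estimate, and to interleave these with the multi-scale interpolation for the error term. Making these three devices fit together coherently, in particular tracking the range $Q_j$ through both $M_{2^j}$ and $E_{2^j}$, is where the real work lies.
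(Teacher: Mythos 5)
The paper does not prove Theorem \ref{cot}: it is quoted from Mirek--Trojan \cite{MT}, whose approximation scheme $\S 4$ then adapts to the Carleson setting. Your proposal is a reasonable circle-method proof of the same statement, but it takes a genuinely different route in the key major-arc step. You share with \cite{MT} the von Mangoldt replacement, the dyadic decomposition of $1/t$, the Siegel--Walfisz/Vinogradov major--minor arc dichotomy, and transference via Magyar--Stein--Wainger. Where you diverge is in controlling the major-arc main term and its maximal truncation: you propose the Ionescu--Wainger multiplier theorem (in a maximal/vector-valued form) to absorb the denominator range $q\le\exp((\log N)^c)$, together with a Rademacher--Menshov square-function reduction for the supremum over $N$. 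Mirek--Trojan -- and this paper, in Lemma \ref{error-est} and Proposition \ref{carleson-red-prop} -- instead split the denominators into dyadic blocks $\R_s=\{a/q : 2^s\le q<2^{s+1},\ (a,q)=1\}$, apply a multi-frequency multiplier estimate (Lemma \ref{K1}, with loss $|X|^{|1/2-1/p|}\lesssim 2^{2s|1/2-1/p|}$) that is beaten by the totient gain $\varphi(q)^{-1}\lesssim 2^{-(1-\epsilon)s}$ and is therefore summable in $s$ for every $1<p<\infty$, and treat the supremum over scales with a Bourgain-type multi-frequency maximal/variational estimate rather than Ionescu--Wainger plus Rademacher--Menshov. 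Your route is sound and is essentially the later Mirek--Stein--Trojan \cite{MST1, MST2} framework, but it carries heavier machinery than the problem requires: the multi-frequency route avoids Ionescu--Wainger altogether, is more elementary, and -- most relevantly here -- is precisely the approach that generalizes to the extra modulation supremum, which is why the present authors build a Carleson analogue of it in $\S 3$ (Proposition \ref{0}) at the cost of shrinking the admissible range of $p$.
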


After Mirek and Trojan's result came to light, further study of averaging operators along the primes was conducted by Zorin-Kranich in \cite{zk}, where a variational extension of Bourgain-Wierdl was proven. This result was later recovered by Mirek, Trojan, and Zorin-Kranich in \cite{MTZ}, where it was presented with the variational strengthening of Theorem \ref{cot}; we refer the reader to $\S 2.4$ below for a further discussion of variation.

Most recently, in December of 2015, two seminal papers by Mirek, Stein, and Trojan \cite{MST1, MST2} established joint extensions of Bourgain \cite{B1} and Mirek, Trojan, and Zorin-Kranich \cite{MTZ}, proving -- among other things -- maximal and variational estimates for averaging operators and truncations of singular integrals along polynomial sequences. In light of their work, it is natural to begin considering discrete analogues of operators which enjoy an additional modulation symmetry -- \emph{Carleson} operators. This work was initiated by Eli Stein, who we have learned \cite{SS} was able to bound the discrete Carleson operator on $\ell^p(\Z)$, $1 < p < \infty$:

\begin{theorem} The discrete Carleson operator
\[ \sup_{0 \leq \lambda \leq 1} \left| \sum_{m \neq 0} f(n-m) \frac{e(\lambda m)}{m} \right| \]
is bounded on $\ell^p(\Z), \ 1<p<\infty$. Here and throughout, $e(t) := e^{2\pi i t}$.
\end{theorem}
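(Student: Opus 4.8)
The plan is to reduce the statement to the classical (continuous) Carleson--Hunt theorem by a sampling/transference argument, after first peeling off an error term that is insensitive to the modulation symmetry. Write $Hf(n):=\sum_{m\neq 0}f(n-m)/m$ for the discrete Hilbert transform and $M_\lambda g(n):=e(\lambda n)g(n)$ for modulation, and for a symbol $\mathfrak m$ let $T_\mathfrak m$ be the associated Fourier multiplier operator (on $\Z$ or on $\RR$ as the context dictates). A one-line computation gives $\sum_{m\neq 0}f(n-m)e(\lambda m)/m=(M_\lambda HM_{-\lambda}f)(n)$, so the operator in question is $\mathcal Cf:=\sup_{\lambda}|M_\lambda HM_{-\lambda}f|$, and since $M_\lambda$ depends only on $\lambda\bmod 1$ we may take the supremum over $\lambda\in\TT$. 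Summing the Fourier series of $1/m$ one finds that $H$ has multiplier $\widehat K(\theta)=-i\pi\sgn(\theta)+2\pi i\theta$ on the fundamental domain $\TT=(-\tfrac12,\tfrac12)$; as a function on $\TT$ this is $C^\infty$ away from $\theta=0$ and has a single jump discontinuity there. Thus $M_\lambda HM_{-\lambda}$ has symbol $\widehat K(\theta-\lambda)$, whose only singularity is the jump at $\theta=\lambda$ --- precisely the structure of Carleson's operator, with the modulation parameter playing the role of the moving frequency.

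Next I isolate the jump. Fix an even $\eta\in C^\infty(\TT)$ with $\eta\equiv1$ near $0$ and $\supp\eta\subset(-\tfrac14,\tfrac14)$, and split $\widehat K=\mathfrak m_0+\mathfrak m_1$, where $\mathfrak m_0(\theta):=-i\pi\,\eta(\theta)\sgn(\theta)$ carries the jump and $\mathfrak m_1:=\widehat K-\mathfrak m_0$. Since $\widehat K+i\pi\sgn=2\pi i\theta$ is smooth across $\theta=0$ while $\eta$ vanishes near the other boundary point of $\TT$, the symbol $\mathfrak m_1$ extends to a $C^\infty$ function on $\TT$; hence its Fourier coefficients are summable, and for every $\lambda$ the kernel of $M_\lambda T_{\mathfrak m_1}M_{-\lambda}$ has $\ell^1(\Z)$ norm equal to $\|\widehat{\mathfrak m_1}\|_{\ell^1(\Z)}$, independent of $\lambda$. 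Consequently $\sup_{\lambda}|M_\lambda T_{\mathfrak m_1}M_{-\lambda}f|\le|\widehat{\mathfrak m_1}|*|f|$, which is bounded on every $\ell^p(\Z)$ by Young's inequality. It therefore remains to prove $\bigl\|\sup_{\lambda\in\TT}|M_\lambda T_{\mathfrak m_0}M_{-\lambda}f|\bigr\|_{\ell^p(\Z)}\lesssim_p\|f\|_{\ell^p(\Z)}$ for $1<p<\infty$, where now the symbol $\mathfrak m_0$ is supported in the unit interval $(-\tfrac14,\tfrac14)\subset(-\tfrac12,\tfrac12)$.

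For this I would pass to the continuous side and transfer back. Let $\widetilde{\mathcal C}F(x):=\sup_{\xi\in\RR}\bigl|T^{\RR}_{\mathfrak m_0(\cdot-\xi)}F(x)\bigr|$ on $\RR$. Writing $\mathfrak m_0(\cdot-\xi)=-i\pi\sgn(\cdot-\xi)+i\pi\sgn(\cdot-\xi)\bigl(1-\eta(\cdot-\xi)\bigr)$, the first term contributes the classical continuous Carleson operator, bounded on $L^p(\RR)$ for $1<p<\infty$ by Carleson--Hunt. The symbol $i\pi\sgn(\psi)(1-\eta(\psi))$ of the second term is bounded and $C^\infty$ on all of $\RR$, vanishes near $\psi=0$, and has compactly supported derivatives, so the corresponding convolution kernel is smooth away from the origin, rapidly decaying at infinity, and has a single Hilbert-type singularity at the origin; its maximally modulated operator is therefore dominated by the maximally truncated Carleson operator together with an operator of convolution with an integrable kernel, hence is bounded on $L^p(\RR)$ for $1<p<\infty$ by the variational Carleson theorem of Oberlin, Seeger, Tao, Thiele and Wright. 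Thus $\widetilde{\mathcal C}$ is bounded on $L^p(\RR)$. Because the symbols $\mathfrak m_0(\cdot-\xi)$ are the modulates of the single symbol $\mathfrak m_0$, which is supported in a unit interval, a maximal, modulation-invariant version of the Magyar--Stein--Wainger sampling principle converts this estimate into $\bigl\|\sup_{\lambda\in\TT}|M_\lambda T_{\mathfrak m_0}M_{-\lambda}f|\bigr\|_{\ell^p(\Z)}\lesssim_p\|f\|_{\ell^p(\Z)}$, and combining with the previous paragraph completes the argument.

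The main obstacle, I expect, is this last step: establishing (or locating in the literature) the sampling principle in the generality required here. The standard Magyar--Stein--Wainger statement transfers a \emph{single} frequency-localized multiplier; here the relevant continuous object is a supremum over all modulates of one whose localized symbol $\mathfrak m_0$ is merely bounded (it carries the jump), so the transference must be carried out uniformly in the modulation parameter, and one must also reconcile the Riemann sum $\sum_m f(n-m)/m$ with the principal-value integral $\int F(x-y)\,dy/y$ that governs the continuous operator. Everything else --- the modulation conjugation, the evaluation of $\widehat K$, the Young's-inequality treatment of $\mathfrak m_1$, and the reduction of $\widetilde{\mathcal C}$ to Carleson--Hunt and to Oberlin--Seeger--Tao--Thiele--Wright --- is routine.
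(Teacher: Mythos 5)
The paper does not supply a proof of this theorem: it is attributed to E.~Stein via personal communication (the reference \cite{SS}), and the only indication of method is the remark that Stein ``was able to \emph{transfer} this result from its celebrated continuous counterpart,'' exploiting the linearity of the phases $m\mapsto\lambda m$. Your proposal is precisely such a transference argument, so it matches the spirit of what the paper reports. The first several steps are correct as stated: the identity $\sum_{m\neq0}f(n-m)e(\lambda m)/m=M_\lambda H M_{-\lambda}f(n)$, the symbol computation $\widehat K(\theta)=-i\pi\sgn(\theta)+2\pi i\theta$ on the fundamental domain $(-\tfrac12,\tfrac12)$, the observation that $\widehat K$ is $C^\infty$ on $\TT\setminus\{0\}$ so that $\mathfrak m_1=\widehat K-\mathfrak m_0$ is $C^\infty$ on all of $\TT$, and the Young's-inequality disposal of $\sup_\lambda|M_\lambda T_{\mathfrak m_1}M_{-\lambda}f|$ via $|(\mathfrak m_1)^\vee|*|f|$.

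Two comments on the final steps, which you flag yourself as incomplete. First, your continuous estimate for $\widetilde{\Ca}$ is more delicate than advertised: the symbol $i\pi\sgn(1-\eta)$ is bounded but does not decay, so its convolution kernel is \emph{not} integrable near the origin --- it carries a genuine $1/x$ singularity there, and peeling that off reintroduces a Carleson-type piece, making the ``integrable kernel plus truncated Carleson'' claim circular as written. A cleaner route, and one this paper itself codifies exactly for this purpose, is the convexity observation of Lemma~\ref{gen}: since $\mathfrak m_0$ is compactly supported with derivative a finite measure (one jump at $0$ plus an integrable part), one has pointwise $\sup_\xi|T^{\RR}_{\mathfrak m_0(\cdot-\xi)}F|\le\|d\mathfrak m_0\|_{\mathrm{TV}}\sup_\xi|S_\xi F|$, so Carleson--Hunt alone gives the $L^p(\RR)$ bound, with no need for the variational theorem. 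Second, the ``maximal, modulation-invariant MSW'' you identify as the main obstacle is simply the vector-valued form recorded as Lemma~\ref{trans}, applied with $B_1=\C$ and $B_2=(\C^T,\ell^\infty_T)$ after restricting the supremum to a finite $T$-element set $\Lambda_T\subset\TT$ and then letting $T\to\infty$ by monotone convergence --- the paper carries out exactly this manoeuvre at the end of $\S4$, so the tool you need exists. The one genuine gap you gloss over is the support condition in Lemma~\ref{trans}: the $B_2$-valued symbol $\theta\mapsto(\mathfrak m_0(\theta-\lambda))_{\lambda\in\Lambda_T}$ must be supported in a single cube of side~$1$, while $\bigcup_{\lambda\in\TT}\supp\mathfrak m_0(\cdot-\lambda)$ has length $\tfrac32$ when $\supp\eta\subset(-\tfrac14,\tfrac14)$. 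This is easily repaired by shrinking $\supp\eta$ and partitioning $\TT$ into finitely many short arcs, applying the transference on each arc separately, but it does need to be noticed. With these repairs your outline becomes a complete proof.
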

Stein was able to \emph{transfer} this result from its celebrated continuous counterpart;  the fact that the phases $m \mapsto \lambda m$ are \emph{linear} plays a crucial role in this transference. Without this linearity, Krause and Lacey in \cite{KL} were only partially able to develop an $\ell^2(\Z)$-theory for the discrete quadratic Carleson operator
\[\sup_{0 \leq \lambda \leq 1} \left| \sum_{m \neq 0} f(n-m) \frac{e(\lambda m^2)}{m} \right|.  \]
Motivated by this work, Mirek proposed the study of the discrete Carleson operator along the primes
\[\Ca_{\PP} f(n):= \sup_{0 \leq \lambda \leq 1} \left| \sum_{p \in \pm \PP } f(n-p) \log |p| \frac{e(\lambda p) }{p} \right|;  \]
in this paper we begin a study of $\Ca_{\PP}$ by exhibiting a class of infinite sets $\Lambda \subset [0,1]$ for which the restricted maximal function
\[\Ca_{\Lambda} f(n):= \sup_{\lambda \in \Lambda} \left| \sum_{p \in \pm \PP } f(n-p) \log |p| \frac{e(\lambda p) }{p} \right| \]
is $\ell^p(\Z)$ bounded, in the range $\frac{3}{2} < p < 4$.

The type of condition that we impose on $\Lambda$ is described here.

For a subset $\Lambda \subset [0,1]$, define $\mathcal{N}(\delta) = \mathcal{N}_\Lambda(\delta)$ to be the smallest number of intervals of length $\delta$ needed to cover $\Lambda$, and set
\[ C_d := C_{d,\Lambda} := \sup_{0< \delta < 1} \delta^d \mathcal{N}(\delta).\]

\begin{definition} A subset $\Lambda \subset [0,1]$ is called a ``pseudo-lacunary set" if for each $j \geq 1$,
\[ C_{1/j} \leq A j^{M},\]
for some $A,M > 0 $ absolute constants.
\end{definition}
We note that pseudo-lacunary sets are zero-Minkowski-dimensional. As the name suggests, any lacunary set is automatically pseudo-lacunary: if $\Lambda = \{ \rho^{-k} : k \geq 0 \}$, where $\rho > 1$, then
\[ \mathcal{N}(\delta) = k+2 \]
for
\[ \rho^{-k-1} \leq \delta < \rho^{-k},\]
and consequently,
\[ C_{1/j} \leq \sup_{k\geq 0} \ (k+2) \rho^{-\frac{k}{j}} \leq A_\rho \cdot j,\]
where $A_\rho > 0$ is an absolute constant.
 
Under the above condition, we prove the following theorem.
\begin{theorem}\label{Main} Suppose $\frac{3}{2} < p < 4$. Then for any pseudo-lacunary set of modulation parameters, $\Lambda \subset [0,1]$, the maximal function
\[ \Ca_\Lambda f(n) := \sup_{\lambda \in \Lambda} \left| \sum_{p \in \pm \PP} f(n-p) \log |p| \frac{e(\lambda p)}{p} \right|,\]
is bounded on $\ell^p(\Z)$.
\end{theorem}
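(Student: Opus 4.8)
The plan is to run a Hardy--Littlewood circle method analysis in which the three inputs advertised in the abstract play complementary roles: the Carleson theorems of Hyt\"onen--Lacey and of Oberlin--Seeger--Tao--Thiele--Wright control a continuous ``arithmetic Carleson'' model operator for a single modulation frequency, while Bourgain's multi-frequency logarithmic lemma reinstates the supremum over $\Lambda$ at a logarithmic cost that the pseudo-lacunary hypothesis renders summable over scales. First I would replace the sum over $\pm\PP$ by a sum over $\Z\setminus\{0\}$ weighted by the von Mangoldt function $\Lambda_{\mathrm{vM}}$ (the proper prime powers contribute an absolutely summable kernel, hence bounded on every $\ell^p(\Z)$ uniformly in $\lambda$). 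For fixed $\lambda$ the resulting operator is convolution with the multiplier $g(\lambda-\theta)$ in the frequency variable $\theta$, where $g(\beta)=\sum_{m\neq 0}\Lambda_{\mathrm{vM}}(|m|)\,e(m\beta)/m$ is the arithmetic sawtooth; I would make sense of this conditionally convergent sum, and of the maximal function, via truncations $g_N$, deferring the passage $N\to\infty$ to a variation-norm estimate below. Using Vaughan's identity together with the classical Weyl/bilinear bounds for $\sum_{m\le N}\Lambda_{\mathrm{vM}}(m)e(m\theta)$, I would split $g_N$ into a major-arc main term
\[
\sum_{q}\frac{\mu(q)}{\phi(q)}\sum_{\substack{1\le a\le q\\(a,q)=1}}\Psi_q(\beta-a/q),
\]
where $\Psi_q$ is a continuous multiplier, essentially $\sgn$ near the origin, truncated to $|\beta|\lesssim\rho_q$, with the factor $\mu(q)/\phi(q)$ produced by a Ramanujan sum, plus a minor-arc error $E_N$. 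The summands of the main term have pairwise disjoint supports over $(a,q)$, and the coefficient attached to denominator $q$ is $\lesssim 1/\phi(q)$.

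\textbf{Minor arcs.} On $E_N$ the Weyl bounds, after summation by parts in the dyadic scale of the kernel, give an $\ell^\infty_\theta$ estimate with a power gain in that scale. To pass to the supremum over $\Lambda$ I would cover $\Lambda$ by $\mathcal N_\Lambda(\delta)$ intervals of length $\delta$, at a scale $\delta$ matched to the kernel scale, and apply on each interval the elementary bound $\sup_{\lambda\in I}|F(\lambda)|\lesssim |I|^{-1}\!\int_I|F|+\int_I|F'|$ to the $\ell^p(\Z)$-valued function $\lambda\mapsto(\text{piece of }E_N)\ast f$. Since $\Lambda$ is pseudo-lacunary, $\mathcal N_\Lambda(\delta)\lesssim_\varepsilon\delta^{-\varepsilon}$ for every $\varepsilon>0$, so the covering count is absorbed by the power gain and the minor-arc part is bounded on $\ell^p(\Z)$, summably over scales, for the whole range $\tfrac32<p<4$ (indeed for a larger range).

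\textbf{The arithmetic Carleson main term.} Writing $\beta=\lambda-\theta$, the main-term multiplier is concentrated near the frequencies $\{\lambda-a/q\}$, so the associated operator $\Ca^{\mathrm{maj}}_\Lambda$ is, up to the weights $\mu(q)/\phi(q)$, a superposition of Hilbert transforms of $f$ modulated to those frequencies and truncated at the parameters $\rho_q$, with $\sup_{\lambda\in\Lambda}$ on the outside. For a single denominator $q$, summing over the $\phi(q)$ residues $a$ and taking the supremum over $\lambda$ is precisely a Carleson operator restricted to a sub-family of frequencies, bounded on $\ell^p(\Z)$ with an $O(1)$ constant by transference from Carleson--Hunt (exactly as in Stein's discrete Carleson theorem quoted above, since a frequency shift by $a/q$ is an $\ell^p$-isometry); the vector-valued Carleson theorem of Hyt\"onen--Lacey lets me carry this out for all $q$ at once in an $\ell^2_q$-valued formulation with no dependence on the number of denominators, and the variation-norm Carleson theorem of Oberlin--Seeger--Tao--Thiele--Wright handles the $q$-dependent truncations $\rho_q$ -- and, back in the setup, the limit $N\to\infty$ -- through an $r$-variation bound with $r>2$. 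Grouping the denominators into dyadic blocks $q\sim 2^s$, the block multiplier has disjoint supports and size $\lesssim 1/\phi(q)\lesssim 2^{-s}\log s$, so these Carleson inputs yield, for each fixed $\lambda$, an $\ell^2(\Z)$ bound $\lesssim 2^{-s}\log s$ for the $s$-th block of $\Ca^{\mathrm{maj}}_\Lambda$. To reinstate $\sup_{\lambda\in\Lambda}$ I would apply Bourgain's multi-frequency logarithmic lemma, at the level of $\ell^2(\Z)$, to the family $\{\lambda-a/q:\lambda\in\Lambda,\ q\sim 2^s,\ (a,q)=1\}$; after discretization at scale $\rho_{2^s}$ this family has cardinality $\lesssim\mathcal N_\Lambda(\rho_{2^s})\cdot 2^{O(s)}\lesssim 2^{O(s)}$ by pseudo-lacunarity, so the lemma costs only a power of its logarithm, i.e.\ $s^{O(1)}$, which is summable against the block decay $2^{-s}\log s$. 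Summing over $s$ gives $\ell^2(\Z)$ boundedness of $\Ca^{\mathrm{maj}}_\Lambda$, and combining these multi-frequency estimates with the disjointness of the frequency localizations and crude $\ell^p$ bounds extends the estimate to $\tfrac32<p<4$, the range in which the vector-valued and variational Carleson inputs operate with the required uniformity.

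\textbf{Main obstacle.} The crux is to manage the arithmetic sum over $q$ and the supremum over the modulation set \emph{simultaneously}. Bourgain's lemma is an $\ell^2$ statement and degrades under interpolation by a power of the number of frequencies, so one must keep that number sub-polynomial relative to the per-block decay -- which is exactly what the bound $\mathcal N_\Lambda(\delta)\lesssim_\varepsilon\delta^{-\varepsilon}$ delivers -- while deploying the Hyt\"onen--Lacey and OSTTW Carleson estimates so that the $\phi(q)$ residue classes and the $q$-dependent truncations $\rho_q$ cost nothing beyond the disjointness gain. Balancing these three competing losses -- multi-frequency versus arithmetic versus Carleson -- is what confines the admissible exponents to $\tfrac32<p<4$, and I expect the technical heart of the argument to lie precisely in making the single-frequency vector-valued and variational Carleson estimates quantitatively compatible with the dyadic-block structure of the denominators.
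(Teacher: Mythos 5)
Your overall scheme -- circle method decomposition, minor arcs controlled by derivative estimates and an entropy count that pseudo-lacunarity keeps summable, major arcs reduced to a multi-frequency Carleson operator treated via Bourgain's logarithmic lemma with the Hyt\"onen--Lacey and Oberlin--Seeger--Tao--Thiele--Wright Carleson theorems as inputs, followed by interpolation to open the range $\tfrac32 < p < 4$ -- matches the paper's strategy in its broad outline. Your minor-arc treatment is morally the same as the paper's Lemma \ref{Mink}, and you correctly identify pseudo-lacunarity as what keeps the cover count subcritical at each dyadic scale.

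The genuine gap is in how the supremum over $\Lambda$ is reinstated on the major arcs. You propose to ``apply Bourgain's multi-frequency logarithmic lemma to the family $\{\lambda - a/q : \lambda \in \Lambda,\ q\sim 2^s\}$,'' treating the shifted rationals as the multi-frequency collection, with $\lambda$ simultaneously labeling frequencies and serving as the supremum variable. But Bourgain's lemma takes a \emph{fixed} set of frequencies $\theta_1,\dots,\theta_K$ and a supremum over a separate scale parameter; the operator sums over all $K$ frequencies for each value of that parameter. Here, for each $\lambda$ one sums over $\{a/q\in\R_s\}$ only -- the relevant frequency set moves with $\lambda$ -- so the lemma does not apply as stated. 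What the paper actually establishes is a new \emph{Carleson-type} extension of Bourgain's lemma, Proposition \ref{0}: for $s$-separated frequencies $\theta_1,\dots,\theta_K$, the maximal function
\[
\sup_{|\lambda|\ll N^{-s}}\Big|\sum_{i=1}^K e(\theta_i x)\,S^s_\lambda\big(\varphi(N^s\xi)\hat f(\xi+\theta_i)\big)^\vee(x)\Big|
\]
is bounded on $L^2$ with norm $\lesssim \log^2 K$, the key point being that the supremum is a local \emph{modulation} supremum rather than a dilation supremum. Proving this requires the variational Carleson theorem of \cite{OSTTW} (for the jump estimate inside the bootstrap) and the vector-valued Carleson theorem of \cite{HL} (for the trivial $K^{1/2}$ bound and the $\ell^2_K$-valued maximal function). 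You invoke these two theorems for a different purpose -- the $q$-dependent truncations and the $N\to\infty$ limit -- and your per-$\lambda$ block bound $\lesssim 2^{-s}\log s$ in fact needs neither, following just from disjointness and Plancherel. With Proposition \ref{0} in hand, the paper covers $\Lambda$ by $\lesssim 2^{\epsilon s}$ intervals of length $N^{-s}$, applies the triangle inequality, re-centers each piece so the frequencies $a_j + a/q$ are fixed, and invokes Proposition \ref{0} on each, absorbing the cover count and the $\log^2 K$ against the $2^{-(1-\epsilon)s}$ arithmetic decay. Your proposal would need both this localization step and a proof of the Carleson version of Bourgain's lemma to close.
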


The structure of the proof is as follows. Following the lead of Krause and Lacey \cite{KL}, we treat the inequality above as a maximal multiplier theorem, where the multipliers are a function of $\lambda$, given by
\[ m(\lambda,\beta) = \sum_{p \in \pm \PP} \log|p| \frac{e(\lambda p - \beta p)}{p}.\]
Using the work of \cite{MT}, a detailed description of $m(\lambda,\beta)$ is available. The analysis splits into the usual major/minor arc dichotomy.

On the minor arcs, little is known about the multiplier $m(\lambda,\beta)$ except that it is ``small;" a major difficulty is that the minor arcs vary with $\lambda$. As in \cite{KL}, we use trivial derivative estimates and the technical Lemma \ref{Mink} to get around this difficulty. This is one of the places where the pseudo-lacunarity hypothesis is needed.

On each individual major arc, centered at rationals with small denominators, the multiplier looks like a weighted version of the Carleson operator. Taking all rationals with small denominators into account leads to a ``multi-frequency Carleson-type operator" -- the Carleson analogue of the multi-frequency averaging operator considered by Bourgain in his work on polynomial ergodic theorems, \cite{B1}. Fortunately, after another appeal to pseudo-lacunarity, Bourgain's method becomes robust enough to apply to our setting; the key analytic ingredient required is the variational Carleson theorem, due to Oberlin, Seeger, Tao, Thiele, and Wright \cite[Theorem 1.2]{OSTTW}.

The structure of the paper is as follows:

In $\S 2$ we collect several preliminary tools;

In $\S 3$ we adapt Bourgain's argument to our setting;

In $\S 4$ we reduce the study of our operator to one amenable to our multi-frequency Carleson theory; and

In $\S 5$ we complete the proof.

\subsection{Acknowledgements} The authors wish to thank Michael Lacey, first and foremost, for his input and support. They would also like to thank Francesco di Plinio for many helpful conversations on the subject of time frequency analysis, and to Gennady Uraltsev for sharing with us Lemma \ref{gen}. Finally, they would like to thank Mariusz Mirek for proposing the problem, and for his early encouragement. 
This research was supported by NSERC Discovery Grants
22R80520 and 22R82900, and by NSF Research and Training Grant DMS 1147523. A substantial part of this work was done at the Institute for Computational and Experimental Research in Mathematics (ICERM), and the authors would like to thank ICERM for its hospitality.

\section{Preliminaries}
\subsection{Notation}
As previously mentioned, we let $e(t) := e^{2\pi i t}$. Throughout, $0<c<1$ will be a small number whose precise value may differ from line to line.

For finitely supported functions on the integers, we define the Fourier transform
\[ \F f(\beta) := \hat{f}(\beta) := \sum_n f(n) e(-\beta n),\]
with inverse
\[ \F^{-1}g(n) := g^{\vee}(n) := \int_{\TT} g(\beta) e(\beta n) \ d\beta.\]
For Schwartz functions on the line, we define the Fourier transform
\[ \F f(\xi) := \hat{f}(\xi) := \int f(x) e(-\xi x) \ dx,\]
with inverse
\[ \F^{-1}g(x) := g^{\vee}(x) := \int g(\xi) e(\xi x) \ d\xi.\]

We will also fix a smooth dyadic resolution of the function $\frac{1}{t}$. Thus,
\[ \frac{1}{t} = \sum_{j \in \Z} \psi_j(t) := \sum_{j \in \Z} 2^{-j} \psi(2^{-j}t), \; \; \; t \neq 0,\]
where $\psi$ is a smooth odd function satisfying $|\psi(x)| \lesssim 1_{[1/4,1]}(|x|)$. This can be accomplished by setting
\[ \psi(x) = \frac{ \eta(x) - \eta(2x) }{x},\]
where $1_{[-1/2,1/2]} \leq \eta \leq 1_{[-1,1]}$ is a Schwartz function. We will mostly be concerned with the regime $|t| \geq 2$, so we will often restrict our attention to
\[ \sum_{j\geq 2} \psi_j(t),\]
which agrees with $\frac{1}{t}$ for $|t| \geq 2$.

Finally, since we will be concerned with establishing a priori $\ell^p(\Z)$- or $L^p(\RR)$- estimates in this paper, we will restrict every function considered to be a member of a ``nice" dense subclass: each function on the integers will be assumed to have \emph{finite support}, and each function on the line will be assumed to be a \emph{Schwartz function.}

We will make use of the modified Vinogradov notation. We use $X \lesssim Y$, or $Y \gtrsim X$ to denote the estimate $X \leq CY$ for an absolute constant $C$. We use $X \approx Y$ as shorthand for $Y \lesssim X \lesssim Y$.

\subsection{Transference}
Although the focus of this paper is proving a discrete inequality, it will be convenient to conduct much of our analysis on the line, where we can take advantage of the dilation structure. The key tool that allows us to ``lift" our real-variable theory to the integers is the following lemma due to Magyar, Stein, and Wainger \cite[Lemma 2.1]{MSW}.

\begin{lemma}[Special Case]\label{trans}
Let $B_1,B_2$ be finite-dimensional Banach spaces, and 
\[ m: \RR \to L(B_1,B_2) \]
be a bounded function supported on a cube with side length one containing the origin that acts as a Fourier multiplier from
\[ L^p(\RR,B_1) \to L^p(\RR, B_2),\]
for some $1 \leq p \leq \infty.$ Here, $L^p(\RR,B):= \{ f: \RR \to B : \| \| f\|_B \|_{L^p(\RR)} < \infty\}$.
Define
\[ m_{\text{per}}(\beta) := \sum_{l \in \Z} m(\beta - l) \ \text{ for } \beta \in \TT.\]
Then the multiplier operator
\[ \| m_{\text{per}} \|_{\ell^p(\Z,B_1) \to \ell^p(\Z,B_2)} \lesssim \| m \|_{L^p(\RR,B_1) \to L^p(\RR,B_2)}.\]
The implied constant is independent of $p, B_1,$ and $B_2$.
\end{lemma}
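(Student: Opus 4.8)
The plan is the classical transference argument of de Leeuw--Magyar--Stein--Wainger type: I would realize the periodized operator on $\Z$ as a \emph{sampling} of the continuous multiplier operator on $\RR$, applied to functions whose Fourier support lies in a single fundamental domain, exploiting that the hypothesis $\supp m \subseteq Q$ (a cube of side length one containing $0$, hence $Q \subseteq [-1,1]$) forces the relevant real-variable output to be band-limited. Fix once and for all $\chi \in C_c^\infty(\RR)$ with $\chi \equiv 1$ on $[-1,1]$ and $\varphi \in \mathcal S(\RR)$ with $\widehat{\varphi} \equiv 1$ on $[-1,1]$; neither depends on $m$. Given a finitely supported $f \colon \Z \to B_1$ with periodization $\hat f(\xi) = \sum_n f(n) e(-\xi n)$ (a trigonometric polynomial), set $F := (\chi\, \hat f)^{\vee} \in \mathcal S(\RR, B_1)$, so that $F(x) = \sum_n f(n)\, \check\chi(x-n)$.

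The first thing to check is that, writing $T$ and $T_{\text{per}}$ for the multiplier operators attached to $m$ and $m_{\text{per}}$ respectively, one has $(TF)\big|_\Z = T_{\text{per}}f$. Indeed $\chi \equiv 1$ on $\supp m$, so $\widehat{TF} = m\, \hat f$; evaluating $\int_\RR m(\xi)\hat f(\xi) e(\xi n)\, d\xi$ at $n \in \Z$, collapsing the integral to $\int_Q$ (where $m$ is supported), using that $m$ agrees with $m_{\text{per}}$ on the interior of $Q$, and that $Q$ tiles $\RR$ under $\Z$-translation, one recovers exactly the $n$-th Fourier coefficient of $m_{\text{per}}\hat f$, i.e. $(T_{\text{per}}f)(n)$.

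It then remains to compare norms with constants independent of $p$ and of $B_1, B_2$. I would prove two estimates. (i) $\|F\|_{L^p(\RR, B_1)} \lesssim \|f\|_{\ell^p(\Z, B_1)}$: from the Schwartz bound $\|F(x)\|_{B_1} \lesssim_N \sum_n \|f(n)\|_{B_1}(1+|x-n|)^{-N}$, the splitting $\|F\|_{L^p(\RR)}^p = \sum_k \|F\|_{L^p([k,k+1])}^p \le \sum_k \sup_{[k,k+1]}\|F\|_{B_1}^p$, and Young's inequality $\ell^1 * \ell^p \to \ell^p$. (ii) For every $G \in \mathcal S(\RR, B_2)$ with $\supp\widehat{G} \subseteq [-1,1]$ one has $G = G*\varphi$, hence $\|G(n)\|_{B_2} \le \int \|G(y)\|_{B_2}\,|\varphi(n-y)|\, dy \lesssim_N \sum_k (1+|n-k|)^{-N}\|G\|_{L^p([k,k+1])}$ (H\"older on a unit interval costs the constant $1$), so $\|(G(n))_n\|_{\ell^p(\Z, B_2)} \lesssim \|G\|_{L^p(\RR, B_2)}$, again by Young for sequences. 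Since $\supp\widehat{TF} \subseteq \supp m \subseteq [-1,1]$, estimate (ii) applies to $G = TF$, and chaining (ii), the hypothesis $\|TF\|_{L^p(\RR, B_2)} \le \|m\|_{L^p(\RR,B_1) \to L^p(\RR,B_2)} \|F\|_{L^p(\RR, B_1)}$, and (i) yields the lemma.

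I expect the only real obstacle to be the bookkeeping behind the $p$-independence of the constant: one must \emph{not} invoke a Plancherel--P\'olya sampling inequality, whose constant degenerates as $p \to 1$ or $p \to \infty$. The organization above around decomposition of $\RR$ into unit intervals plus Young's inequality for sequences sidesteps this --- each step is manifestly insensitive to $p$ and, being pointwise or Minkowski-type in the $B_i$ norms, passes verbatim to the Banach-valued setting.
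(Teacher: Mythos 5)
The paper does not prove Lemma~\ref{trans}; it cites \cite[Lemma 2.1]{MSW} and applies it as a black box, so there is no in-paper proof against which to compare. Your sketch is a correct rendering of the standard Magyar--Stein--Wainger transference argument: the identity $(TF)|_{\Z} = T_{\text{per}}f$ for the band-limited extension $F=(\chi\hat f)^{\vee}$, together with the two unit-interval sampling estimates (i) $\|F\|_{L^p(\RR,B_1)}\lesssim\|f\|_{\ell^p(\Z,B_1)}$ and (ii) $\|(G(n))_n\|_{\ell^p(\Z,B_2)}\lesssim\|G\|_{L^p(\RR,B_2)}$ for $G$ band-limited to $[-1,1]$, each obtained by decomposing $\RR$ into unit intervals and applying Young's convolution inequality for sequences, which is visibly uniform in $p$ and in $B_1,B_2$. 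One small inaccuracy worth correcting: you state (ii) for $G\in\mathcal{S}(\RR,B_2)$, but the function $G=TF$ to which you apply it is not Schwartz in general, since $\widehat{TF}=m\hat F$ inherits only boundedness from $m$. This is harmless---$\widehat{TF}$ is a bounded, compactly supported function, so $TF$ is a continuous $L^p$ function satisfying $TF=TF*\varphi$ because $\hat\varphi\equiv1$ on $\supp\widehat{TF}\subseteq[-1,1]$, and that is all the proof of (ii) uses---but (ii) should be stated for this wider class rather than for $\mathcal{S}(\RR,B_2)$.
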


We will often be in the position of estimating certain Fourier multipliers. The sharpest tool in this regard is the following lemma due to Krause \cite[Theorem 1.5]{K}. We begin by recalling the $r$-variation of a function, $0< r < \infty$:
\[ \| m\|_{\V^r(\RR)} := \sup \left( \sum_{i} |m(\xi_i) - m(\xi_{i+1})|^r \right)^{1/r},\]
where the supremum runs over all finite increasing subsequences $\{\xi_i\} \subset \RR$. By the nesting of little $\ell^p$ spaces, it is easy to see that for $r \geq 1$, the $r$-variation is no larger than the total variation
\[ \| m\|_{\V^1(\RR)} := \sup \left( \sum_{i} |m(\xi_i) - m(\xi_{i+1})| \right),\]
with the same restrictions on the supremum, which agrees with $\| m' \|_{L^1(\RR)}$ for differentiable functions. Then:

\begin{lemma}[Special Case]\label{K1}
Suppose $\{m_\omega: \omega \in X\}$ are a finite collection of (say) $C^1$ functions compactly supported on disjoint intervals $\omega \subset \RR$, and $\{c_\omega : \omega \in X \} \subset \C$ are a collection of scalars. 
Then
\[ \| \left( \sum_{\omega \in X} c_\omega m_\omega \hat{f} \right)^{\vee} \|_{L^p(\RR)} \lesssim \sup_{\omega \in X} |c_\omega| \cdot |X|^{|1/2 - 1/p|} \cdot \sup_{\omega \in X} \| m_\omega \|_{\V^2(\RR)} \cdot \| f\|_{L^p(\RR)}\]
for each $1 < p < \infty$.

In particular, if $\| (m_\omega)' \|_{L^1(\RR)} \lesssim 1$ for each $\omega \in X$,
\[ \| \left( \sum_{\omega \in X} c_\omega m_\omega \hat{f} \right)^{\vee} \|_{L^p(\RR)} \lesssim \sup_{\omega \in X} |c_\omega| \cdot |X|^{|1/2 - 1/p|} \cdot \| f\|_{L^p(\RR)}\]
for each $1 < p < \infty$.
\end{lemma}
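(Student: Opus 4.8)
The plan is to treat this estimate as a vector-valued Calder\'on--Zygmund bound and interpolate it against the trivial $L^2$ estimate. After dividing through by $\sup_\omega|c_\omega|$ I may assume $|c_\omega|\le 1$ for all $\omega$, and since choosing a base point outside $\supp m_\omega$ in the definition of the $2$-variation shows $\|m_\omega\|_{L^\infty(\RR)}\le\|m_\omega\|_{\V^2(\RR)}$, after dividing by $\sup_\omega\|m_\omega\|_{\V^2(\RR)}$ I may also assume $\|m_\omega\|_{\V^2(\RR)}\le 1$. Write $S_\omega f:=(1_\omega\widehat f\,)^{\vee}$ and let $M_\omega$ be the multiplier with symbol $m_\omega 1_\omega$, so that $Tf:=\bigl(\sum_{\omega\in X}c_\omega m_\omega\widehat f\,\bigr)^{\vee}=\sum_{\omega\in X}c_\omega M_\omega S_\omega f$ is a sum of operators whose images have the pairwise disjoint Fourier supports $\omega$. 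At $p=2$, Plancherel together with this disjointness immediately gives $\|Tf\|_{L^2}^2=\sum_\omega|c_\omega|^2\int_\omega|m_\omega|^2|\widehat f\,|^2\le\|f\|_{L^2}^2$, which is the claim since $|X|^{|1/2-1/2|}=1$. Everything else is aimed at producing a companion estimate, near $L^1$, carrying the loss $|X|^{1/2}$.

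I would obtain that second estimate first under the stronger hypothesis $\|m_\omega'\|_{L^1(\RR)}\lesssim 1$, which is exactly the ``in particular'' clause. In that case I would further decompose each $m_\omega$ into smooth (Schwartz-adapted) bumps supported on subintervals of $\omega$, in the style of Coifman--Rubio de Francia--Semmes, so that it suffices to treat the case where each $m_\omega$ is itself such a bump; the point of doing so is that for a bump adapted to $\omega$ the operator $M_\omega$ is a dilated and modulated copy of a fixed Calder\'on--Zygmund convolution operator with \emph{integrable} kernel, and because $\int_{|t|>2|h|}|e^{i\theta}K(t-h)-K(t)|\,dt\le\int_{|t|>2|h|}|K(t-h)-K(t)|\,dt+|e^{i\theta}-1|\,\|K\|_{L^1(\RR)}$, such operators have H\"ormander constants bounded uniformly in the modulation and dilation. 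Then the $\ell^2(X)$-valued operator $Uf:=(c_\omega M_\omega S_\omega f)_{\omega\in X}$ has, by Minkowski's integral inequality, a kernel with $\ell^2(X)$-size $\lesssim|X|^{1/2}|t|^{-1}$ and an $\ell^2(X)$-valued H\"ormander constant $\lesssim|X|^{1/2}$, so the vector-valued Calder\'on--Zygmund theorem yields $U\colon L^1(\RR)\to L^{1,\infty}(\RR;\ell^2(X))$ with norm $\lesssim|X|^{1/2}$; since also $\|Uf\|_{L^2(\ell^2)}^2=\sum_\omega|c_\omega|^2\|M_\omega S_\omega f\|_{L^2}^2\lesssim\|f\|_{L^2}^2$, Marcinkiewicz interpolation gives $U\colon L^p\to L^p(\RR;\ell^2(X))$ of norm $\lesssim|X|^{|1/2-1/p|}$ for $1<p\le 2$.

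To pass from $U$ back to $T$ I would invoke the dual form of Rubio de Francia's square-function inequality: since the functions $c_\omega M_\omega S_\omega f$ have disjoint Fourier supports,
\[
\|Tf\|_{L^p}=\Bigl\|\sum_\omega c_\omega M_\omega S_\omega f\Bigr\|_{L^p}\lesssim\Bigl\|\Bigl(\sum_\omega|c_\omega M_\omega S_\omega f|^2\Bigr)^{1/2}\Bigr\|_{L^p}=\|Uf\|_{L^p(\ell^2)}\lesssim|X|^{|1/2-1/p|}\|f\|_{L^p}
\]
for $1<p\le 2$, all constants being independent of $X$; this proves the ``in particular'' case in that range, and $2<p<\infty$ then follows by duality, since the adjoint of $T$ is of the same type with symbols $\overline{c_\omega}\,\overline{m_\omega}$ (conjugation preserving $\V^2$ and $\V^1$ norms and moduli of scalars) and $|1/2-1/p'|=|1/2-1/p|$. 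For the general $\V^2$ statement one runs the Coifman--Rubio de Francia--Semmes decomposition at the outset: since $\|m_\omega\|_{\V^2(\RR)}\le 1$, at each dyadic scale $j\ge 0$ the symbol $m_\omega$ has $O(2^{2j})$ maximal subintervals of $\omega$ on which its oscillation exceeds $2^{-j}$, and one writes $m_\omega=\sum_{j\ge 0}m_{\omega,j}$ with each $m_{\omega,j}$ a sum of $O(2^{Cj})$ adapted bumps of coefficient $\lesssim 2^{-j}$; applying the ``in particular'' case to the family at scale $j$, of cardinality $\lesssim 2^{Cj}|X|$, contributes $\lesssim 2^{-j}(2^{Cj}|X|)^{|1/2-1/p|}\|f\|_{L^p}$, and the series over $j$ converges precisely because $|1/2-1/p|<1/2$ for every $1<p<\infty$, reproducing the asserted bound.

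I expect the main obstacle to be the Coifman--Rubio de Francia--Semmes step itself: one must arrange the decomposition so that the individual pieces are genuinely smooth bumps with coefficient comparable to the local oscillation (which forces subtracting local averages and then telescoping them back together), keep the per-scale count of the form $2^{Cj}|X|$, and ensure that the replacement intervals at each scale still consist of (essentially) disjoint intervals so that the Rubio de Francia step retains an absolute constant. That accounting, together with the elementary but essential observation that modulated Calder\'on--Zygmund operators with integrable kernels have uniformly bounded H\"ormander constants, is where the hypotheses are really used, and it is the marginal convergence of the $j$-sum that confines the conclusion to the open range $1<p<\infty$.
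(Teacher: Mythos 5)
The paper does not prove Lemma~\ref{K1}; it is quoted verbatim as a special case of Theorem 1.5 of Krause~\cite{K}, so there is no in-text proof to compare against. Judged on its own terms, your strategy is the right circle of ideas and matches what one expects of the source: normalize, interpolate the trivial $L^2$ Plancherel bound against a near-$L^1$ estimate that carries the $|X|^{1/2}$ loss, obtain the latter from a vector-valued Calder\'on--Zygmund/weak-$(1,1)$ bound for the square-function operator $U$, pass back to $T$ by the dual Rubio de Francia inequality (valid because the images have disjoint Fourier supports), and reach the $\mathcal{V}^2$ statement via the Coifman--Rubio de Francia--Semmes dyadic decomposition, whose $j$-sum converges precisely when $|1/2-1/p|<1/2$.

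Two steps need repair. First, the appeal to ``Minkowski's integral inequality'' for the $\ell^2(X)$-valued H\"ormander constant is wrong as written: Minkowski gives $\bigl\|\int F\,dt\bigr\|_{\ell^2}\le\int\|F\|_{\ell^2}\,dt$, i.e.\ it bounds the quantity you want from \emph{below}, not above; and neither $\ell^2\hookrightarrow\ell^1$ nor $\|\cdot\|_{\ell^2}\le|X|^{1/2}\|\cdot\|_{\ell^\infty}$ combined with pulling the supremum past the integral produces the asserted $|X|^{1/2}$. The estimate $\int_{|t|>2|h|}\bigl(\sum_\omega|K_\omega(t-h)-K_\omega(t)|^2\bigr)^{1/2}dt\lesssim|X|^{1/2}$ is nonetheless true, but the kernels $K_\omega$ corresponding to bumps adapted to intervals of wildly varying widths $\ell_\omega$ do not share a common integrable envelope, so one has to argue more carefully --- e.g.\ a weighted Cauchy--Schwarz in $t$ (weight $|h|/|t|^2$) reduces the $\ell^2$-valued H\"ormander bound to the scalar estimates $\int_{|t|>2|h|}|t|^2|h|^{-1}|K_\omega(t-h)-K_\omega(t)|^2\,dt\lesssim1$, which do hold uniformly in $\ell_\omega$. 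Second, your plan proves the ``in particular'' ($\mathcal{V}^1$) clause \emph{first} by ``further decomposing each $m_\omega$ into bumps,'' then deduces the $\mathcal{V}^2$ statement from it by another CRS decomposition --- but the reduction from $\|m_\omega'\|_{L^1}\lesssim1$ to single bumps is itself a CRS-type decomposition (a $\mathcal{V}^1$ function may contain bumps at infinitely many scales), so as stated the two decompositions are stacked and the constants are not tracked. The cleaner route, consistent with the last paragraph of your own argument, is to prove the lemma directly for families of single adapted bumps (by the corrected CZ argument), reach the general $\mathcal{V}^2$ case with a single CRS decomposition, and then note --- as the paper does --- that the ``in particular'' clause is an immediate consequence of $\|m_\omega\|_{\mathcal{V}^2}\le\|m_\omega\|_{\mathcal{V}^1}=\|m_\omega'\|_{L^1}$ rather than an independent input.
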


In the special case where each $\omega \subset A \cong \TT$, a fundamental domain for the torus with $A \subset [-1,1]$, we may apply Lemma \ref{trans} with $B_1 = B_2 = \C$.
\begin{lemma}\label{KK}
Suppose $\{m_\omega: \omega \in X\}$ are a finite collection of (say) $C^1$ functions compactly supported on disjoint intervals $\omega \subset A \cong \TT$, $A \subset [-1,1]$, and $\{c_\omega : \omega \in X \} \subset \C$ are a collection of scalars. 
Then
\[ \| \left( \sum_{\omega \in X} c_\omega m_\omega \hat{f} \right)^{\vee} \|_{\ell^p(\Z)} \lesssim \sup_{\omega \in X} |c_\omega| \cdot |X|^{|1/2 - 1/p|} \cdot \sup_{\omega \in X} \| m_\omega \|_{\V^2(A)} \cdot \| f\|_{\ell^p(\Z)}\]
for each $1 < p < \infty$.

In particular, if $\| (m_\omega)' \|_{L^1(A)} \lesssim 1$ for each $\omega \in X$,
\[ \| \left( \sum_{\omega \in X} c_\omega m_\omega \hat{f} \right)^{\vee} \|_{\ell^p(\Z)} \lesssim \sup_{\omega \in X} |c_\omega| \cdot |X|^{|1/2 - 1/p|} \cdot \| f\|_{\ell^p(\Z)}\]
for each $1 < p < \infty$.
\end{lemma}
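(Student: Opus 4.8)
The plan is to obtain Lemma~\ref{KK} from its real-line predecessor, Lemma~\ref{K1}, by transferring the estimate to $\ell^p(\Z)$ via Lemma~\ref{trans} with $B_1 = B_2 = \C$. To begin, note that the operator appearing in the statement is precisely the Fourier multiplier operator on $\ell^p(\Z)$ whose (periodic) symbol is $m(\beta) := \sum_{\omega \in X} c_\omega m_\omega(\beta)$, a bounded function supported in $A$.

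The one thing preventing an immediate appeal to Lemma~\ref{trans} is that it requires $m$ to be the periodization of a function supported in a cube of side length one about the origin, whereas $A$ is only known to lie in $[-1,1]$. I would handle this by a harmless decomposition of the index set. Since the intervals $\{\omega\}$ are pairwise disjoint, at most one of them, say $\omega_0$, contains $0$ in its interior; cutting $\omega_0$ at $0$ (and $m_{\omega_0}$ with it) produces a family in which every interval is contained in $[-1,0]$ or in $[0,1]$, so we may write $X = X_- \sqcup X_+$ with all $\omega \in X_\mp$ contained in $[-1,0]$, resp.\ $[0,1]$, and with $|X_\pm| \le |X| + 1$. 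It then suffices to prove the asserted bound for each of the two sub-sums $\sum_{\omega \in X_\pm} c_\omega m_\omega$ separately and add, since $|X_\pm|^{|1/2 - 1/p|} \lesssim |X|^{|1/2 - 1/p|}$.

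Consider one such half, say $X_+$. I would view $\tilde m := \sum_{\omega \in X_+} c_\omega m_\omega$ as a bounded, compactly supported function on $\RR$ by extending each $m_\omega$ (or each piece of $m_{\omega_0}$) by zero. Extending by zero, or cutting at a point and extending by zero, only introduces the endpoint/cut values of $m_\omega$, which are themselves dominated by the variation of $m_\omega$; hence the $\V^2(\RR)$-norm of each summand of $\tilde m$ is $\lesssim \|m_\omega\|_{\V^2(A)}$, and, in the presence of the hypothesis $\|(m_\omega)'\|_{L^1(A)} \lesssim 1$, is moreover $\lesssim 1$ (recall $\|m_\omega\|_{\V^2} \le \|m_\omega\|_{\V^1} = \|(m_\omega)'\|_{L^1}$ for $C^1$ functions, and $\|m_\omega\|_\infty \lesssim \|(m_\omega)'\|_{L^1}$). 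So Lemma~\ref{K1} applies to $\tilde m$ and yields
\[ \Big\| \Big( \sum_{\omega \in X_+} c_\omega m_\omega \hat f \Big)^{\vee} \Big\|_{L^p(\RR)} \lesssim \sup_{\omega \in X_+} |c_\omega| \cdot |X_+|^{|1/2 - 1/p|} \cdot \sup_{\omega \in X_+} \| m_\omega \|_{\V^2(A)} \cdot \| f \|_{L^p(\RR)}, \]
together with its sharpened form in which the variation factor is replaced by an absolute constant under the derivative hypothesis. Since $\tilde m$ is supported in the side-one cube $[0,1]$ containing the origin, its periodization induces exactly the operator $f \mapsto \big( \sum_{\omega \in X_+} c_\omega m_\omega \hat f \big)^{\vee}$ on $\ell^p(\Z)$, so Lemma~\ref{trans} turns the display above into the matching $\ell^p(\Z)$ bound, with implied constant independent of $p$. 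Repeating the argument for $X_-$ and summing completes the proof.

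I do not anticipate any genuine obstacle: the argument is essentially bookkeeping on top of Lemmas~\ref{K1} and \ref{trans}. The only steps requiring (routine) care are the decomposition of $A \subset [-1,1]$ into pieces small enough to be transferred, and the verification that cutting and zero-extending the $m_\omega$ costs at most a constant factor in the $\V^2$ and $L^1$-derivative quantities.
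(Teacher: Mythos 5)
Your proof is correct and follows exactly the intended route: view $\sum_{\omega}c_\omega m_\omega$ as a compactly supported multiplier on $\RR$, apply Lemma~\ref{K1} there, and transfer to $\ell^p(\Z)$ via Lemma~\ref{trans} with $B_1=B_2=\C$. The decomposition into $X_\pm$ and the cutting of $\omega_0$ at the origin are, however, unnecessary: the hypothesis $A\cong\TT$ means $A$ is a fundamental domain of the torus and hence an interval of length one, and together with $A\subset[-1,1]$ this forces $0\in A$, so $A$ is already a side-length-one cube containing the origin and Lemma~\ref{trans} applies directly to the full sum.
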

Here, the $\V^2(A)$ norm is defined as is the $\V^2(\RR)$ norm, with the restriction that all subsequences live inside $A$.

\subsection{A Technical Estimate}
This lemma exhibits one way in which pseudo-lacunarity is used. It is a generalization of the maximal multiplier estimate proved in \cite[Lemma 2.3]{KL}. Its proof follows the same entropy argument used there, so we choose to omit it. In practice, one will apply this lemma at many different scales as well as vary the value of $d$, which is why we need good control over the constants
\[ C_d := \sup_{0<\delta <1} \delta^d \mathcal{N}(\delta).\]

\begin{lemma}\label{Mink}
Suppose $1 \leq p < \infty$, $\Lambda \subset [0,1]$ has upper Minkowski dimension at most $d$, and set
\[ C_d := \sup_{0< \delta <1} \delta^d \mathcal{N}(\delta) ,\]
where $\mathcal{N}(\delta)$ is the $\delta$-entropy of $\Lambda$ as above.
Suppose that $\{ T_\lambda : \lambda \in [0,1] \}$ is a family of operators so that for each $f \in L^p$, $ T_\lambda f(x)$
is a.e.\ differentiable in $\lambda \in [0,1]$. Set
\[ a := \sup_{\lambda \in [0,1]} \| T_\lambda \|_{L^p \to L^p},\]
and
\[ A := \sup_{\lambda \in [0,1]} \| \partial_\lambda T_\lambda \|_{L^p \to L^p}.\]
Then
\[ \| \sup_\Lambda |T_\lambda f| \|_{L^p} \lesssim C_d^{1/p} (a + a^{1-d/p} A^{d/p} ) \|f\|_{L^p}.\]
\end{lemma}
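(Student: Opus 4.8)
The plan is to run the entropy-plus-Sobolev-embedding argument of \cite[Lemma 2.3]{KL}, tracking carefully the dependence on $\mathcal{N}(\delta)$. Fix $\delta \in (0,1)$ and choose intervals $I_1, \dots, I_N$ with $N = \mathcal{N}(\delta)$, each of length $\delta$, covering $\Lambda$; after replacing $I_k$ by $I_k \cap [0,1]$ we may assume $I_k \subset [0,1]$, and we fix an arbitrary point $\lambda_k \in I_k$. Since $\lambda \mapsto T_\lambda f(x)$ is absolutely continuous on $[0,1]$ for a.e.\ $x$ --- which we may assume after restricting to the nice dense class of $f$, interpreting the supremum over $\Lambda$ as one over a fixed countable dense subset --- the fundamental theorem of calculus yields, for every $\lambda \in I_k$,
\[ |T_\lambda f(x)| \leq |T_{\lambda_k} f(x)| + \int_{I_k} |\partial_\mu T_\mu f(x)| \, d\mu . \]
Taking the supremum over $\lambda \in I_k \cap \Lambda$, then the maximum over $k$, and bounding $\ell^\infty_k$ by $\ell^p_k$, one obtains the pointwise estimate
\[ \sup_{\lambda \in \Lambda} |T_\lambda f(x)| \leq \Big( \sum_{k} |T_{\lambda_k} f(x)|^p \Big)^{1/p} + \Big( \sum_{k} \Big( \int_{I_k} |\partial_\mu T_\mu f(x)| \, d\mu \Big)^p \Big)^{1/p} . \]

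Next I take $L^p$ norms of both sides. The first sum contributes at most $N^{1/p} \sup_k \| T_{\lambda_k} f \|_{L^p} \leq N^{1/p} a \| f \|_{L^p}$. For the second, Minkowski's integral inequality gives $\big\| \int_{I_k} |\partial_\mu T_\mu f| \, d\mu \big\|_{L^p} \leq \int_{I_k} \| \partial_\mu T_\mu f \|_{L^p} \, d\mu \leq \delta A \| f \|_{L^p}$, so that sum contributes at most $N^{1/p} \delta A \| f \|_{L^p}$. Using $N = \mathcal{N}(\delta) \leq C_d \delta^{-d}$, this proves
\[ \Big\| \sup_{\Lambda} |T_\lambda f| \Big\|_{L^p} \lesssim C_d^{1/p} \, \delta^{-d/p} \big( a + \delta A \big) \| f \|_{L^p} \qquad \text{for every } 0 < \delta < 1 . \]

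It remains to optimize in $\delta$. If $A \geq a$ I take $\delta = a/A \in (0,1]$ (a limiting value if the ratio equals $1$), making the two terms in the bracket comparable and giving $\delta^{-d/p}(a + \delta A) = 2 a^{1-d/p} A^{d/p}$; note $a^{1-d/p}A^{d/p} \geq a$ in this regime, so this is $\lesssim a + a^{1-d/p}A^{d/p}$. If $A < a$ I let $\delta \to 1^-$, so that $\delta^{-d/p}(a + \delta A) \to a + A \leq 2a \leq 2\big(a + a^{1-d/p}A^{d/p}\big)$. In either case the bracket is $\lesssim a + a^{1-d/p} A^{d/p}$, which is the claimed inequality. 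I do not expect any real analytic difficulty here; the only delicate points are the bookkeeping ones --- the measurability of $\sup_\Lambda |T_\lambda f|$ and the legitimacy of the pointwise fundamental-theorem-of-calculus step --- both handled by working with the dense class of nice functions and a fixed countable dense subset of $\Lambda$, on which the supremum is unchanged by continuity of $\lambda \mapsto T_\lambda f(x)$.
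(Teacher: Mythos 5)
Your proof is correct and follows the same single-scale entropy argument that the paper alludes to (covering $\Lambda$ by $\mathcal N(\delta)$ intervals, applying the fundamental theorem of calculus on each interval, bounding the $\ell^\infty_k$ maximum by $\ell^p_k$, and optimizing in $\delta$), which is exactly the scheme of Lemma 2.3 in Krause--Lacey that the authors cite and explicitly decline to reproduce. The only fine point --- that a.e.\ differentiability alone does not license the pointwise FTC step, so one must really assume absolute continuity (automatic here for Schwartz $f$ and multiplier operators smooth in $\lambda$) --- is correctly flagged and handled in your write-up.
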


\subsection{Variational Preliminaries}
For a sequence of functions, $\{ f_\lambda(x) \}$, we define the $r$-variation, $0<r< \infty$
\[ \V^r(f_\lambda)(x) := \sup \left( \sum_{i} | f_{\lambda_i} - f_{\lambda_{i+1}}|^r \right)^{1/r}(x),\]
where the supremum runs over all finite increasing subsequences $\{ \lambda_i\}$. (The $\infty$-variation, $\V^\infty(f_\lambda)(x) := \sup_{\lambda,\lambda'} |f_\lambda - f_{\lambda'}|(x)$ is comparable to the maximal function, $\sup_\lambda |f_\lambda|(x)$, and so is typically not introduced.) These variation operators are more difficult to control than the maximal function $\sup_\lambda |f_\lambda|$: for any $\lambda_0$, one may pointwise dominate
\[ \sup_\lambda |f_\lambda| \leq \V^\infty(f_\lambda) + |f_{\lambda_0}| \leq \V^r(f_\lambda) + |f_{\lambda_0}|,\]
where $r< \infty$ is arbitrary. This difficulty is reflected in the fact that although having bounded $r$-variation, $r < \infty$, is enough to imply pointwise convergence, there are functions which converge as $\lambda \to \infty$, but which have unbounded $r$ variation for any $r < \infty$. (e.g. $\{ f_\lambda = (-1)^\lambda \frac{1}{\log(1 + \lambda)} : \lambda \in \N \}$)

In developing an $L^2$-theory for his polynomial ergodic theorems, Bourgain proved and used crucially the following variational result \cite[Lemma 3.11]{B1}:
\begin{lemma} Let $A_\lambda f(x) := \frac{1}{\lambda} \int_0^\lambda f(x-t) \ dt$ denote the Lebesgue averaging operator. Then for $r > 2$,
\[ \| \V^r (A_\lambda f) \|_{L^2(\RR)} \lesssim \frac{r}{r-2} \| f\|_{L^2(\RR)}.\]
\end{lemma}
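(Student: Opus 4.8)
The plan is to exploit the scaling structure of $A_\lambda$ together with a telescoping decomposition of the variation into dyadic blocks, reducing matters to a short-variation and a long-variation estimate. First I would normalize by writing $A_\lambda f(x) = \phi_\lambda * f(x)$, where $\phi_\lambda(t) = \lambda^{-1}\mathbf 1_{[0,\lambda]}(t)$, so that on the Fourier side $\widehat{A_\lambda f}(\xi) = \widehat\phi(\lambda\xi)\widehat f(\xi)$ with $\widehat\phi(\eta) = (e^{-2\pi i\eta}-1)/(-2\pi i\eta)$. The key structural point is the dilation invariance: if $D_t f(x) = f(x/t)$, then $A_{t\lambda}(D_t f) = D_t(A_\lambda f)$, so the variation operator $\V^r(A_\lambda f)$ is (up to the obvious rescaling) invariant under dyadic dilations of the $\lambda$-parameter. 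This lets us split any increasing sequence $\{\lambda_i\}$ into the contribution from jumps lying within a single dyadic scale $[2^k,2^{k+1})$ (the short variation) and the contribution from the ``sampled'' values $A_{2^k}f$ (the long variation), via the standard inequality
\[ \V^r(A_\lambda f)(x)^r \lesssim \sum_{k\in\Z} \V^r\big(A_\lambda f : \lambda\in[2^k,2^{k+1})\big)(x)^r + \V^r\big(A_{2^k}f : k\in\Z\big)(x)^r. \]

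For the long variation I would use an $L^2$ square-function bound: since $r>2$,
\[ \V^r\big(A_{2^k}f\big)(x) \leq \Big(\sum_{k\in\Z}\big|A_{2^{k+1}}f(x) - A_{2^k}f(x)\big|^2\Big)^{1/2}, \]
and by Plancherel this has $L^2$ norm controlled by $\big\|\big(\sum_k |\widehat\phi(2^{k+1}\xi) - \widehat\phi(2^k\xi)|^2\big)^{1/2}\big\|_{L^\infty}\|f\|_2$. The multiplier difference $\widehat\phi(2^{k+1}\xi)-\widehat\phi(2^k\xi)$ is $O(\min(2^k|\xi|, (2^k|\xi|)^{-1}))$ because $\widehat\phi(\eta) = 1 + O(\eta)$ near $0$ and $\widehat\phi(\eta) = O(|\eta|^{-1})$ at infinity, so the sum over $k$ converges to an absolute constant uniformly in $\xi$, giving the long-variation bound with a constant independent of $r$. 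For the short variation, within $[2^k,2^{k+1})$ I would bound the $r$-variation by the $2$-variation and then, after rescaling $\lambda = 2^k s$ with $s\in[1,2)$, estimate $\V^2(A_{2^k s}f : s\in[1,2))$ via the Sobolev-type inequality $\V^2(g_s)^2 \lesssim \big(\int_1^2 |g_s|^2\,ds\big)^{1/2}\big(\int_1^2|\partial_s g_s|^2\,ds\big)^{1/2} + \int_1^2|g_s|^2\,ds$. Both $A_{2^ks}f$ and $\partial_s(A_{2^ks}f) = 2^k(\partial_\lambda A_\lambda f)|_{\lambda=2^ks}$ have Fourier multipliers bounded by $\min(2^k|\xi|,(2^k|\xi|)^{-1})$ up to constants (the derivative multiplier is $\eta\widehat\phi{}'(\eta)$ evaluated at $\eta=2^ks\xi$, which enjoys the same decay), so summing the squares over $k$ again yields an absolute constant via Plancherel.

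The factor $r/(r-2)$ enters only through the passage from $\ell^r$ to $\ell^2$ in the short-variation term: splitting a sequence of jumps of total count $N$ inside one dyadic block and interpolating, or more precisely using the jump-counting formulation $\lambda^{1/2}\cdot(\text{number of }\epsilon\text{-jumps})^{1/2}\cdot\epsilon$ summed dyadically in $\epsilon$, produces a geometric series in $2^{-\alpha}$ with $\alpha = 1/2 - 1/r > 0$, whose sum is $\lesssim (1/2-1/r)^{-1} \approx r/(r-2)$ for $r$ near $2$. I expect \textbf{the short-variation estimate to be the main obstacle}: one must be careful that the $2$-variation at a fixed dyadic scale is controlled with a constant uniform in $k$ (this is where the $L^2$ Sobolev embedding and the exact scaling are used), and that reassembling the dyadic pieces in $\ell^r$ rather than $\ell^2$ is exactly what forces the $r/(r-2)$ blow-up and no worse. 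All other steps are routine Plancherel computations with the explicit, well-behaved multiplier $\widehat\phi$.
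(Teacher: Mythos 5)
The paper does not prove this lemma at all: it is quoted verbatim as \cite[Lemma 3.11]{B1} and used as a black box (indeed, its role downstream is only via Theorem~\ref{var} of \cite{OSTTW}, the Carleson analogue). So there is no in-paper proof to compare against. Still, your proposal has a genuine gap in the long-variation step, and you have also misidentified where the $r/(r-2)$ factor comes from.

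The pointwise inequality you invoke for the long variation,
\[
\V^r\big(A_{2^k}f\big)(x)\ \leq\ \Big(\sum_{k\in\Z}\big|A_{2^{k+1}}f(x)-A_{2^k}f(x)\big|^2\Big)^{1/2},
\]
is false for a general sequence. The $r$-variation is a supremum over arbitrary increasing subsequences, and a slowly drifting sequence (e.g.\ $g_k$ monotone over a long stretch) makes $\V^r(g_k)$ as large as the total increment while the consecutive-differences square function is only its square root; the inequality $\V^r \leq (\text{consecutive square function})$ simply has no reason to hold, for any $r$. Handling the dyadic long variation of $A_{2^k}f$ is exactly where the work is: one compares $A_{2^k}f$ to a martingale $\mathbb{E}_k f$ (conditional expectation on the dyadic filtration), controls the difference $A_{2^k}f - \mathbb{E}_k f$ by a genuine square function with an absolute constant, and then appeals to L\'epingle's martingale variation inequality for $\V^r(\mathbb{E}_k f)$ --- and it is L\'epingle's inequality that produces the $r/(r-2)$ constant. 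Your attempt to locate the $r/(r-2)$ in the short variation cannot work: within a fixed dyadic block one has the trivial pointwise bound $\V^r \leq \V^2$ for $r \geq 2$, and $\ell^2 \hookrightarrow \ell^r$ when reassembling blocks, so the short-variation piece carries an absolute constant independent of $r$. Your short-variation argument (Sobolev/Gagliardo--Nirenberg in the scale variable plus Plancherel over $k$) is otherwise sound and is indeed the standard way to treat that term; the missing ingredient is the martingale comparison and L\'epingle for the long variation.
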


Since then, variational estimates have been the source of much work in ergodic theory and harmonic analysis. An important result for our paper is the following theorem, due to Oberlin, Seeger, Tao, Thiele, and Wright \cite[Theorem 1.2]{OSTTW}:
\begin{theorem}[Theorem 1.2 of \cite{OSTTW} -- Special Case]\label{var} Let $S_\lambda f(x) := \int_{-\infty}^\lambda \hat{f}(\xi) e(\xi x) \ d\xi$. Then for $r > 2$,
\[ \| \V^r(S_\lambda f) \|_{L^2(\RR)} \lesssim \frac{r}{r-2} \| f\|_{L^2(\RR)}.\]
\end{theorem}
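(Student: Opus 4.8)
\emph{Proof proposal.} This is a strengthening of Carleson's theorem, so the plan is to prove it by the Lacey--Thiele time-frequency method, with the variational norm woven directly into the argument. First I would linearize: by a measurable selection it suffices to bound, uniformly over all measurable families of finite increasing sequences $\lambda_0(x) < \lambda_1(x) < \cdots$ and coefficient sequences with $\sum_j |\varepsilon_j(x)|^{r'} \le 1$ (where $r' = r/(r-1) < 2$) and over $\|g\|_{L^2(\RR)} \le 1$, the bilinear form
\[
\int_\RR \sum_{j} \varepsilon_j(x)\,\bigl( S_{\lambda_{j+1}(x)}f(x) - S_{\lambda_j(x)}f(x)\bigr)\,\overline{g(x)}\,dx .
\]
It is convenient to peel off first, via a standard long/short variation splitting (as in Jones--Seeger--Wright), the \emph{short variation} $\bigl(\sum_{k} \V^2(S_\lambda f : 2^k \le \lambda < 2^{k+1})^2\bigr)^{1/2}$: each summand only sees one Littlewood--Paley block, so after rescaling it reduces to a single fixed-scale estimate that follows from Plancherel together with a Rademacher--Menshov square-function argument, and the sum over $k$ closes by orthogonality; this piece contributes a bound with a constant independent of $r$. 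What remains is the \emph{long variation} over the lacunary frequency cutoffs $\{2^k\}_{k \in \Z}$, which still contains the Carleson operator and is the real content.

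For the long variation I would apply the usual smooth discretization, writing the linearized form as a sum over tiles $P$ in the phase plane of $\ip{f,\phi_P}$ against a coefficient $a_P(x)$ that now encodes the entire linearizing datum $(\lambda_j,\varepsilon_j)$ rather than a single modulation frequency. Then run the standard scheme: organize the tiles into trees with common tops; prove a \emph{single-tree estimate} dominating each tree's contribution by $(\text{size})\times(\text{energy})\times|I_{\mathrm{top}}|$; prove the \emph{size} and \emph{energy} selection lemmas exhibiting all tiles as a union of trees whose sizes decay geometrically; and sum the resulting geometric series. The one genuinely new ingredient relative to Carleson's theorem is that the single-tree estimate must be carried out \emph{with the $\V^r$ norm present}: along the spine of a tree the operators $S_\lambda$ behave like Lebesgue averages, so one controls the variation over the scales internal to the tree by a Lépingle/Bourgain-type variational inequality for averages --- the same phenomenon as in the lemma of \cite{B1} quoted above --- and the residual within-scale fluctuations by Rademacher--Menshov. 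For $r > 2$ this costs a factor comparable to $\frac{r}{r-2}$, which is exactly the source of the stated constant; and because $r' < 2$, the $\ell^{r'}$-normalization of $(\varepsilon_j(x))_j$ is precisely what keeps the $\ell^2$-type (energy) and BMO/Carleson-type (size) bounds for the coefficients $a_P$ in force.

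The hard part, then, is this variational single-tree estimate together with checking that the $\V^r$-twisted coefficients still obey the size and energy axioms; the remainder of the argument is a faithful transcription of the proof of Carleson's theorem. Once the single-tree estimate and the two selection lemmas are in hand, summing over the dyadic size classes gives the bound $\lesssim \frac{r}{r-2}\|f\|_{L^2(\RR)}$ for the long variation, which combined with the short-variation estimate yields the theorem.
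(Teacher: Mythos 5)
The paper does not prove this theorem: it is imported wholesale as \cite[Theorem 1.2]{OSTTW}, and the accompanying Remark explicitly attributes the $\frac{r}{r-2}$ dependence to ``close inspection of the proof of the stated theorem'' on page 24 of that reference. So there is no in-paper argument to compare your proposal against.

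Taken as a free-standing sketch of how the variational Carleson theorem is actually proved, your outline is a fair high-level description of the Lacey--Thiele-style strategy that Oberlin, Seeger, Tao, Thiele, and Wright use: linearization with $\ell^{r'}$-normalized coefficients, tile decomposition, tree organization with size/energy selection, and a variational single-tree estimate whose L\'{e}pingle-type input is the source of the $\frac{r}{r-2}$ constant. But you leave the two genuinely new estimates --- the variational tree estimate, and the verification that the $\V^r$-linearized coefficients still satisfy the size and energy axioms --- as items to be proved rather than proving them; these are precisely the substantive content of \cite{OSTTW}. The short-variation $L^2$ bound you describe as routine also needs more care than your phrasing suggests (a genuine Rademacher--Menshov argument, not just Plancherel). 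In short, this is a plausible roadmap to the cited proof, not a proof, and in the context of this paper the correct move is exactly the one the authors make: cite \cite{OSTTW} and use the result as a black box.
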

\begin{remark}
The dependence on $r$ follows from close inspection of the proof of the stated theorem, \cite[page 24]{OSTTW}.
\end{remark}

In fact, our variational operators will never exactly involve the $\{ S_\lambda \}$ operators, but rather ``truncated" versions thereof,
\[ S^\phi_\lambda f(x) := \int_{-\infty}^\lambda \phi(\xi - \lambda) \hat{f}(\xi) e(\xi x) \ d\xi,\]
where $\phi \in C_c^\infty$ is compactly supported. The following convexity argument, kindly shared with us by Gennady Uraltsev, shows that the variation corresponding to $\{ S^\phi_\lambda\}$ may be controlled by the variation corresponding to the $\{ S_\lambda\}$:

\begin{lemma}\label{gen}
Suppose that $m(t)$ is a compactly supported absolutely continuous function.
then for any $0 < r < \infty$
\[ \V^r( \F^{-1}( m(\xi -\lambda) \hat{f}(\xi) ) ) \leq \| dm \|_{\text{TV}} \cdot \V^r (S_\lambda f), \]
pointwise; at the endpoint
\[ \sup_\lambda | \F^{-1}( m(\xi -\lambda) \hat{f}(\xi) ) | \leq \| dm \|_{\text{TV}} \cdot \sup_\lambda | S_\lambda f|. \]
 Here $\| - \|_{\text{TV}}$ denotes the total variation of the measure.
\end{lemma}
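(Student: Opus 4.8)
The plan is to write $m$ as a superposition of indicator functions of half-lines and exploit linearity. Since $m$ is compactly supported and absolutely continuous, we may write, for each $\xi$,
\[ m(\xi) = \int_{\RR} \mathbf{1}_{(-\infty,\xi]}(u) \, dm(u), \]
where $dm$ is the (finite, compactly supported) Lebesgue--Stieltjes measure associated with $m$; this is just the fundamental theorem of calculus rewritten, using that $m$ vanishes at $-\infty$. Substituting $\xi \mapsto \xi - \lambda$ and plugging into the multiplier, we obtain, by Fubini (justified since $f$ is Schwartz and $dm$ has finite total variation with compact support),
\[ \F^{-1}\big( m(\xi-\lambda)\hat f(\xi)\big)(x) = \int_{\RR} \F^{-1}\big( \mathbf{1}_{(-\infty,\xi]}(u)\big|_{\xi \mapsto \xi-\lambda} \,\hat f(\xi)\big)(x) \, dm(u) = \int_{\RR} S_{\lambda + u} f(x) \, dm(u), \]
because $\mathbf{1}_{(-\infty,\xi-\lambda]}(u) = \mathbf{1}_{(-\infty,\xi]}(u+\lambda)= \mathbf{1}_{\{\xi \ge \lambda+u\}}$, so the inner inverse Fourier transform is exactly the Carleson truncation $S_{\lambda+u}f(x)$.

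Now the estimate follows from the triangle inequality for the $\V^r$ seminorm together with the translation-invariance of the variation in the truncation parameter. Concretely, for any finite increasing sequence $\lambda_1 < \cdots < \lambda_n$,
\[ \Big( \sum_i \big| (S^m_{\lambda_i} f - S^m_{\lambda_{i+1}} f)(x)\big|^r \Big)^{1/r} = \Big( \sum_i \Big| \int_{\RR} (S_{\lambda_i+u}f - S_{\lambda_{i+1}+u}f)(x)\, dm(u)\Big|^r\Big)^{1/r}, \]
and by Minkowski's integral inequality in $\ell^r$ (valid for $r\ge 1$; for $0<r<1$ one uses the quasi-triangle inequality $\|\cdot\|_{\ell^r}$ is a norm on sequences after the elementary $(\sum|a_k+b_k|^r)^{1/r}\le(\sum|a_k|^r)^{1/r}+\cdots$ fails, so instead one integrates the scalar bound — see below) this is at most
\[ \int_{\RR} \Big( \sum_i \big| (S_{\lambda_i+u}f - S_{\lambda_{i+1}+u}f)(x)\big|^r\Big)^{1/r} \, |dm|(u) \le \int_{\RR} \V^r(S_\lambda f)(x)\, |dm|(u) = \|dm\|_{\mathrm{TV}} \cdot \V^r(S_\lambda f)(x), \]
since $\{\lambda_i + u\}$ is again a finite increasing sequence and hence the inner sum is dominated by $\V^r(S_\lambda f)(x)$ uniformly in $u$. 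Taking the supremum over all such sequences $\{\lambda_i\}$ on the left gives the claimed pointwise bound. The endpoint statement is the same argument with the sum replaced by $\sup_{\lambda,\lambda'}|\cdot|$, i.e. $r=\infty$, using $|S^m_\lambda f(x)| = |\int S_{\lambda+u}f(x)\,dm(u)| \le \|dm\|_{\mathrm{TV}}\sup_\mu |S_\mu f(x)|$ directly.

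The only genuine subtlety — and the main thing to be careful about — is the range $0<r<1$, where $\ell^r$ is not normed and Minkowski's inequality is unavailable in the form stated. Here one instead argues pointwise in the summation index before integrating: for fixed $x$ one has the elementary bound, for each $i$,
\[ \big| (S^m_{\lambda_i}f - S^m_{\lambda_{i+1}}f)(x)\big|^r \le \Big( \int |S_{\lambda_i+u}f - S_{\lambda_{i+1}+u}f|(x)\,|dm|(u)\Big)^r, \]
and then applies the inequality $(\int g\, d\mu)^r \le \|\mu\|_{\mathrm{TV}}^{r} \cdot \|\mu\|_{\mathrm{TV}}^{-1}\int g^r\, d\mu$ is false in general; the clean route is to note that for $0<r<1$ the subadditivity $(a+b)^r\le a^r+b^r$ upgrades to $\big(\sum_k a_k\big)^r \le \sum_k a_k^r$ only for finitely many terms, so one discretizes $dm$ as a weak-$*$ limit of finitely supported measures $\sum_k c_k \delta_{u_k}$ with $\sum_k |c_k| \le \|dm\|_{\mathrm{TV}} + \varepsilon$, applies $\V^r(S^{m_\varepsilon}_\lambda f) \le \big(\sum_k |c_k|^r\big)^{1/r}\sup\cdots$ — no, cleaner still: for $0<r<1$ use that $\V^r \le \V^{r'}$ pointwise for... this also fails. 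In practice the lemma is only invoked for $r>2$ in the sequel, so I would state and prove it for $r\ge 1$ via Minkowski as above and remark that the case $0<r<1$, if needed, follows by a routine approximation of $dm$ by finitely supported measures combined with the convexity (triangle) inequality for $\V^r$-quasinorms on finite sums; this approximation step, not any real analysis, is the only place requiring care.
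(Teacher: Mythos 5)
Your argument is essentially the paper's: both decompose $m$ as an integral of half-line indicators against $dm$, pass the $dm$-integral outside the Fourier multiplier, and close with convexity of the $\V^r$ quasinorm (the paper phrases the final step via $\ell^r$--$\ell^{r'}$ duality, you via Minkowski's integral inequality; for $r\ge 1$ these are the same). Two small points worth flagging. First, with your representation $m(\xi)=\int\mathbf 1_{(-\infty,\xi]}(u)\,dm(u)$, the inner inverse Fourier transform of $\mathbf 1_{\{\xi\ge\lambda+u\}}\hat f$ is the \emph{complementary} truncation $f(x)-S_{\lambda+u}f(x)$, not $S_{\lambda+u}f(x)$ as you state; your identity $\F^{-1}(m(\xi-\lambda)\hat f)(x)=\int S_{\lambda+u}f(x)\,dm(u)$ is nonetheless correct (up to a sign), but only because $\int dm=0$ by compact support, so the spurious $f(x)\cdot\int dm$ term vanishes --- you should either invoke that cancellation explicitly or use the paper's representation $m(\xi)=-\int\mathbf 1_{(-\infty,t]}(\xi)\,dm(t)$, which lands on $S_{\lambda+t}f$ directly. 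Second, your digression on $0<r<1$ is long and self-admittedly inconclusive; note that the paper's duality argument also only covers $r\ge 1$, and since the lemma is invoked only for $r>2$, the clean fix is just to state and prove it for $r\ge 1$ and move on.
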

We will apply this lemma, of course, in the case where $m = \phi \cdot 1_{t < 0}$.
\begin{proof}
We begin with the variation.

With $x$ fixed, we select appropriate finite subsequences, $\{\lambda_i\} \subset \RR$ and $\{ a_i\} \subset \C$ with 
\[ \sum_i |a_i|^{r'} = 1 \]
so that
\[
\V^r( \F^{-1}( m(\xi -\lambda) \hat{f}(\xi) ) )(x) = \left| \sum_i \F^{-1} \left( (m(\xi - \lambda_{i+1}) - m(\xi-\lambda_i) ) \hat{f} (\xi) \right)(x) \cdot a_i \right|.\]

Writing
\[ m(\xi) = - \int 1_{[\xi,\infty)}(t) \ dm(t) = - \int 1_{(-\infty,t]}(\xi) \ dm(t)\]
and substituting appropriately allows us to express
\[ \aligned
&\left| \sum_i \F^{-1} \left( (m(\xi - \lambda_{i+1}) - m(\xi-\lambda_i) ) \hat{f} (\xi) \right)(x) \cdot a_i \right| \\
& \qquad =
\left| \int \sum_i 
\F^{-1} \left( 1_{(t+\lambda_i,t+\lambda_{i+1}]} \hat{f} (\xi) \right)(x) \cdot a_i \ dm(t) \right| \\
& \qquad \leq
\int \V^r (S_\lambda f)(x) \ d|m|(t) \endaligned\]
by the triangle inequality and the definition of the $\V^r$ norm.

The case of the supremum is easier: for any $\lambda, x$
\[ \aligned 
\left| \F^{-1} \left( m(\xi -\lambda) \hat{f}(\xi) \right) \right|(x) &= \left| \int \F^{-1} \left( 1_{ (-\infty,t+ \lambda] }(\xi) \hat{f}(\xi) \right)(x) \ dm(t) \right| \\
&\leq \int \sup_\lambda \left| \F^{-1} \left( 1_{(-\infty,\lambda]}(\xi) \hat{f}(\xi) \right) \right|(x) \ d|m|(t), \endaligned\]
which yields the result.

\end{proof}

\section{A Key Maximal Inequality}
In this section we prove the key maximal inequality of our paper. It is an extension of an inequality of Bourgain \cite[Lemma 4.11]{B1}, and follows closely his proof, using Lemma \ref{var} appropriately.

Let $P(t) := 1_{t < 0}$ and $1_{[-c,c]} \leq \phi \leq 1_{[-2c,2c]}$ be a Schwartz function, where $0 < c \ll 1$ is sufficiently small.
Set
\[ \widehat{S^s_\lambda f}(\xi) := P(\xi - \lambda) \phi(N^s(\xi - \lambda))\hat{f}(\xi),\]
where $N$ is a large enough dyadic integer, and $s \geq 0$ is an integer which will be fixed throughout.
By our variational convexity lemma, we know that
\[ \V^r (S^s_\lambda f ) \lesssim \V^r( S_\lambda f) \]
pointwise, uniformly in $s \geq 0$. 

Now, let $1_{[-a,a]} \leq \varphi \leq 1_{[-2a,2a]}$ be a Schwartz function, with $0 < c \ll a < 1$, and $a$ sufficiently larger than $c$. 

For a collection of frequencies $\{ \theta_1, \dots, \theta_K \} \subset \RR$, we say that $\{ \theta_i \}$ are \emph{$s$-separated} if
\[ \min_{1 \leq i \neq j \leq K} |\theta_i - \theta_j| > 2^{-2s-2}.\]

We have the following proposition, whose proof combines the argument of Bourgain and the variational result of \cite{OSTTW}:
\begin{proposition}\label{0}
For any collection of $s$-separated frequencies $\{ \theta_1, \dots, \theta_K \}$, we have the following estimate,
\[ Mf(x):= \sup_{|\lambda| \ll N^{-s}} \left| \sum_{i=1}^K e(\theta_i x) S^s_\lambda ( \varphi(N^s\xi) \hat{f}(\xi + \theta_i) )^{\vee}(x) \right| \]
is $L^2(\RR)$-bounded with operator norm $\lesssim \log^{2} K$.
\end{proposition}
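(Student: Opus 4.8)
The plan is to follow Bourgain's multi-frequency argument from \cite[Lemma 4.11]{B1}, replacing the appeal to his variational estimate for averages by the variational Carleson estimate, Theorem \ref{var}, together with the convexity Lemma \ref{gen}. First I would record the two basic building blocks. On the one hand, since the operators $S^s_\lambda$ are frequency projections to $(-\infty,\lambda]$ cut off at scale $N^{-s}$, Lemma \ref{gen} gives, pointwise and uniformly in $s$, the bound $\V^r(S^s_\lambda g) \lesssim \V^r(S_\lambda g)$, and then Theorem \ref{var} gives $\|\V^r(S^s_\lambda g)\|_{L^2} \lesssim \frac{r}{r-2}\|g\|_{L^2}$ for $r>2$. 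On the other hand, for the maximal function one has the trivial pointwise domination $\sup_\lambda |f_\lambda| \le |f_{\lambda_0}| + \V^r(f_\lambda)$ for any fixed $\lambda_0$ (here $\lambda_0 = 0$, say), so it suffices to bound the $r$-variation in $\lambda$ of the full sum $F_\lambda(x) := \sum_{i=1}^K e(\theta_i x)\, S^s_\lambda\big(\varphi(N^s\xi)\hat f(\xi+\theta_i)\big)^\vee(x)$ in $L^2$, plus the single term at $\lambda=0$, which is harmless.

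The heart of the matter is that the $K$ summands have essentially disjoint frequency supports once we exploit $s$-separation. The function $\xi \mapsto \varphi(N^s\xi)\hat f(\xi+\theta_i)$ is supported in $|\xi| \lesssim a N^{-s}$, i.e. the $i$-th summand (before the modulation $e(\theta_i x)$ is absorbed) lives at frequencies within $O(N^{-s})$ of $-\theta_i$; applying $S^s_\lambda$ with $|\lambda| \ll N^{-s}$ keeps us there. Since the $\theta_i$ are $2^{-2s-2}$-separated and $N$ is a large dyadic integer (so $N^{-s} \ll 2^{-2s}$), these frequency blobs are pairwise disjoint. This is exactly the setup where a Rademacher/Khintchine square-function argument à la Bourgain applies: one wants to say $\|\V^r(F_\lambda)\|_{L^2}^2 \lesssim \sum_i \|\V^r(\text{$i$-th term})\|_{L^2}^2$ up to the logarithmic loss. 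Concretely, following Bourgain, I would linearize the variation by choosing measurable sequences $\lambda_0(x) < \lambda_1(x) < \cdots$ and coefficients, decompose the increasing family of "stopping intervals" dyadically by length (this is where one first $\log K$ enters, since only $\sim \log K$ scales matter after the separation is used), and on each scale use almost-orthogonality of the frequency-localized pieces together with the scalar variational estimate above applied to each $g = \varphi(N^s\cdot)\hat f(\cdot+\theta_i)$. Summing the $\sim \log K$ scales with Cauchy--Schwarz produces the second $\log K$, for a total of $\log^2 K$; and $\sum_i \|g_i\|_{L^2}^2 \lesssim \|f\|_{L^2}^2$ because the translates $\hat f(\cdot+\theta_i)$ restricted to the disjoint windows recombine to (a piece of) $\|f\|_{L^2}^2$.

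The main obstacle, and the step I would spend the most care on, is making the almost-orthogonality rigorous in the presence of the \emph{supremum over $\lambda$} (equivalently, the variation): the cutoff $\phi(N^s(\xi-\lambda))$ and the sharp cutoff $P(\xi-\lambda)$ both depend on $\lambda$, so the frequency support of the $i$-th term wobbles with $\lambda$, and one cannot simply insert fixed Littlewood--Paley projections. Bourgain's device is to absorb this by noting the $\lambda$-dependent cutoffs have bounded total variation at the relevant scale, which is precisely what Lemma \ref{gen} is designed to handle — it lets one trade the moving truncated projection $S^s_\lambda$ for the clean family $S_\lambda$ at the cost of an absolute constant, \emph{before} doing the orthogonality. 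A secondary technical point is bookkeeping the $r \to 2^+$ blow-up: one fixes $r = 2 + 1/\log K$ (or similar), so that $\frac{r}{r-2} \sim \log K$, and checks this is consistent with the two logarithmic losses above rather than worsening them — it does not, since it only rescales one of the already-present $\log K$ factors. Once these are in place, assembling the pieces is routine, and one concludes $\|Mf\|_{L^2} \lesssim \log^2 K \,\|f\|_{L^2}$.
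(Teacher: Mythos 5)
Your plan correctly identifies the right references and the right ingredients (Bourgain's Lemma 4.11, the variational Carleson theorem, Lemma \ref{gen} to pass from $S^s_\lambda$ to $S_\lambda$, the choice $r=2+(\log K)^{-1}$), and the target estimate and exponent count are right. But the mechanism you describe in the middle of the proof does not match, and as written it has a genuine gap.

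The gap is in the step where you reduce $\sup_\lambda$ to $\V^r(F_\lambda)$ and then try to establish $\|\V^r(F_\lambda)\|_{L^2}^2 \lesssim \log^2K \sum_i \|\V^r(\text{$i$-th term})\|_{L^2}^2$ by ``almost-orthogonality of the frequency-localized pieces.'' Variation norms do not respect orthogonality in this way: once you linearize $\V^r(F_\lambda)(x)$ via $x$-dependent stopping sequences $\lambda_0(x)<\lambda_1(x)<\cdots$ and coefficients $a_k(x)$, the differences $F_{\lambda_{k+1}(x)}-F_{\lambda_k(x)}$ are evaluated at $x$-dependent parameters, so Plancherel cannot be applied across the $i$-sum; a fortiori, bounding the full $r$-variation of the multi-frequency sum is strictly harder than bounding the maximal function, and the paper never attempts it. The asserted ``only $\sim\log K$ scales matter'' is also unjustified: the parameter $\lambda$ is a continuum and there is no a priori reason the relevant dyadic range of jump sizes is $\log K$.

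What the paper actually does is quite different: it bootstraps on the best constant $B$ in the $L^2$ estimate by exploiting the $N^{-s}$ frequency localization of the pieces $f_i$. One first shows $B\lesssim K^{1/2}$ via Cauchy--Schwarz and the Hyt\"onen--Lacey vector-valued Carleson bound (Lemma \ref{vector-Carleson-lemma}). Then, for $u\in[0,cN^s]$, the Fourier support restriction gives $\|\vec f - T_u\vec f\|_{\mathcal H}\le\tfrac12\|\vec f\|_{\mathcal H}$, so $B\le C\log^2K + \tfrac12 B$ provided one finds \emph{some} $u$ for which $\|M(T_u\vec f)\|_{L^2}\lesssim\log^2K\,\|\vec f\|_{\mathcal H}$; this is done by averaging in $u$. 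After averaging, one is faced pointwise in $x$ with controlling $\sup_{a\in A_x}\bigl|\sum_i a_i e(\theta_i x)e(\theta_i u)\bigr|$ in $L^2_u[0,cN^s]$, where $A_x=\{(S^s_\lambda f_1(x),\dots,S^s_\lambda f_K(x)):\lambda\in\Delta\}\subset\ell^2_K$; this is handled by a chaining argument (Lemma \ref{ent}) bounded by $\int\min\{K^{1/2},\mathcal N^*_{A_x}(t)^{1/2}\}\,dt$. The entropy numbers $\mathcal N^*_{A_x}(t)$ are controlled by the jump numbers $J_t(x)$ of the $\ell^2_K$-valued curve $\lambda\mapsto S^s_\lambda\vec f(x)$, and $tJ_t(x)^{1/r}\le\V^r_{\ell^2_K}(S^s_\lambda\vec f)(x)$ ties this to the vector-valued variational Carleson bound (via \ref{gen} and \ref{var}). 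The two logarithms arise in Lemma \ref{tech,2}: one from $\int_{F/K^{1/2}}^{F}\,dt/t\sim\log K$ and one from $r/(r-2)\sim\log K$. In short, the chaining takes place in the $\ell^2_K$ fiber for fixed $x$, not across frequency components in $L^2(\RR)$, and the variation is used only through jump counting, never as a target norm on the full $K$-term sum. If you want to realize your outline, you would need to either adopt this translation--averaging/chaining scheme, or find an independent argument that controls $\V^r(F_\lambda)$ for the superposition, which is not supplied by the $s$-separation alone.
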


Before beginning with the proof proper, we need some tools.

\subsection{Tools}
We will often work on the Hilbert space 
$\mathcal{H}:=L^{2}(\ell_K^2)$, equipped with the norm
\begin{equation}\label{e-normB}
\|\vec{f}\|_\mathcal{H}=\left\| \left( \sum_{i=1}^K |f_i|^2 \right)^{1/2} \right\|_{L^{2}(\RR)},\ \ 
\vec{f}=(f_1,\dots,f_K),
\end{equation}
and let the operators under consideration act on $\vec{f}\in\mathcal{H}$ componentwise, so that for example
\[ S_\lambda^s \vec{f} := (S_\lambda^s f_1,\dots, S_\lambda^s f_K).\]

With $\vec{f}$ given, define
\[ \aligned 
J_t(x) &:= J_t(S_\lambda^s \vec{f}(x) : \lambda) \\
&:= \sup \{ N : \exists \lambda_0 < \lambda_1 < \dots < \lambda_N \text{ with } \|S_{\lambda_i}^s \vec{f}(x) - S_{\lambda_{i-1}}^s \vec{f}(x)\|_{\ell^2_K} > t \} \endaligned \]
to be the {\it $t$-jump number} of $\{S_{\lambda_i}^s \vec{f}(x):\ \lambda\in\Lambda\}$ (note that the ``jumps'' are with respect to the parameter $\lambda$, with $x$ fixed).

For $\vec{f} \in \mathcal{H}$, we consider $\V^r_{\ell^2_K}(S_\lambda^s \vec{f})(x)$, with the $\ell^2_K$-norm replacing the Euclidean norm in the natural way. For each $0< t < \infty$, we have the pointwise inequality
\begin{equation}\label{vec-e20}
 t J_t(x)^{1/r} \leq \V^r_{\ell^2_K}(S^s_\lambda \vec{f})(x)
 \end{equation}
for each $0 < r < \infty$. Taking Lemma \ref{gen} into consideration, and using Minkowski's inequality, we are able to deduce that for each $r > 2$, 
\[ \V^r_{\ell^2_K} (S^s_\lambda \vec{f}) \leq \left( \sum_{i=1}^K \V^r (S^s_\lambda f_i)^2 \right)^{1/2} \lesssim \left( \sum_{i=1}^K \V^r (S_\lambda f_i)^2 \right)^{1/2}
\]
uniformly in $s$. Using also Theorem \ref{var}, it follows that
\begin{equation}\label{H}
 \| \V^r_{\ell^2_K} (S^s_\lambda \vec{f}) \|_{L^2(\RR)} \lesssim \frac{r}{r-2} \| \vec{f} \|_{\mathcal{H}}.
\end{equation}

We will need the following technical lemmas.

\begin{lemma}\label{vector-Carleson-lemma}
For any vector $\vec{f} = (f_1,\dots, f_K)$ with
\[ \left( \sum_{i=1}^K |f_i|^2 \right)^{1/2} \in L^p(\RR),\]
 define
\begin{equation}\label{def-F}
F(x) := \sup_\lambda \left( \sum_{i=1}^K |S^s_\lambda f_i(x)|^2 \right)^{1/2}. 
\end{equation}
Then for any $1 < p < \infty$,
\[ \|F\|_{L^{p}(\RR)}\lesssim \|\left( \sum_{i=1}^K |f_i|^2 \right)^{1/2} \|_{L^p(\RR)}.\]
\end{lemma}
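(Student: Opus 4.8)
The plan is to dominate $F$ pointwise by the $\ell^2_K$-valued Carleson maximal operator applied to $\vec f$ and then to quote a vector-valued form of the Carleson--Hunt theorem. All constants will come out independent of $K$ and of $s$, which is what is needed later.

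First I would strip off the smooth cutoff $\phi(N^s(\xi-\lambda))$. Writing $m(\eta) := P(\eta)\phi(N^s\eta) = 1_{\eta<0}\,\phi(N^s\eta)$, we have $\widehat{S^s_\lambda f}(\xi) = m(\xi-\lambda)\hat f(\xi)$, and the Lebesgue--Stieltjes measure $dm$ is the sum of a unit jump at the origin (note $\phi(0)=1$) and the absolutely continuous piece $N^s\phi'(N^s\eta)\,1_{\eta<0}\,d\eta$, so that $\|dm\|_{\mathrm{TV}} \leq 1 + \|\phi'\|_{L^1(\RR)} \lesssim 1$ uniformly in $s$ --- the same bound already underlying $\V^r(S^s_\lambda f)\lesssim\V^r(S_\lambda f)$. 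The endpoint case of Lemma \ref{gen} then gives, pointwise and uniformly in $s\geq 0$,
\[ \sup_\lambda |S^s_\lambda f_i(x)| \;\lesssim\; \sup_\lambda |S_\lambda f_i(x)| \qquad (1 \leq i \leq K).\]
Next I would interchange the supremum with the $\ell^2_K$-norm: for fixed $x$ and any $\lambda_0$ one has $\big(\sum_i |S^s_{\lambda_0}f_i(x)|^2\big)^{1/2}\leq\big(\sum_i(\sup_\lambda |S^s_\lambda f_i(x)|)^2\big)^{1/2}$, and taking the supremum over $\lambda_0$ and then applying the previous display yields
\[ F(x) \;\leq\; \Big( \sum_{i=1}^K \big(\sup_\lambda |S^s_\lambda f_i(x)|\big)^2 \Big)^{1/2} \;\lesssim\; \Big( \sum_{i=1}^K \big(\sup_\lambda |S_\lambda f_i(x)|\big)^2 \Big)^{1/2}.\]

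It then remains to bound the right-hand side in $L^p(\RR)$ by a constant times $\|(\sum_i |f_i|^2)^{1/2}\|_{L^p(\RR)}$. Since $S_\lambda f = \F^{-1}(1_{(-\infty,\lambda]}\hat f)$, the scalar operator $f\mapsto\sup_\lambda|S_\lambda f|$ is the Carleson operator governing a.e.\ convergence of Fourier integrals; it is bounded on $L^p(w)$ for every $w\in A_p$ and $1<p<\infty$ (equivalently, after the usual reduction to a modulation-invariant maximal Hilbert transform, by the weighted Carleson--Hunt theorem), and hence, by the Rubio de Francia extrapolation theorem, admits the $\ell^2$-valued extension
\[ \Big\| \Big( \sum_{i=1}^K \big(\sup_\lambda |S_\lambda f_i|\big)^2 \Big)^{1/2} \Big\|_{L^p(\RR)} \;\lesssim\; \Big\| \Big( \sum_{i=1}^K |f_i|^2 \Big)^{1/2} \Big\|_{L^p(\RR)}\]
with implied constant independent of $K$; alternatively one may cite directly the vector-valued Carleson theorem of Hyt\"onen and Lacey. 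Chaining the three displays completes the proof. The only substantive input is this last vector-valued Carleson estimate --- everything else is the mechanical use of Lemma \ref{gen} together with the elementary interchange of $\sup_\lambda$ and the $\ell^2_K$-norm --- so that (or, more honestly, pinning down the cleanest reference for it) is where I would expect the one real point of care to lie.
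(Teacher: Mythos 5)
Your proposal is correct and follows essentially the same two-step strategy as the paper: use Lemma \ref{gen} to strip off the cutoff $\phi(N^s(\cdot-\lambda))$ uniformly in $s$, then invoke a vector-valued Carleson theorem. The paper's own proof is two sentences: it applies (the vector-valued extension of) Lemma \ref{gen} to obtain $F(x) \lesssim \sup_\lambda \big( \sum_i |S_\lambda f_i(x)|^2\big)^{1/2}$ with the supremum \emph{outside} the $\ell^2_K$-sum, which is precisely the quantity that Hyt\"onen--Lacey \cite[Theorem 1.1]{HL} control, and stops there.

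One small but real distinction: you interchange $\sup_\lambda$ with the $\ell^2_K$-norm before comparing, landing on the larger quantity $\big(\sum_i (\sup_\lambda |S_\lambda f_i|)^2\big)^{1/2}$. This Fefferman--Stein-type bound is \emph{not} what Hyt\"onen--Lacey's theorem gives directly (they bound the maximal $\ell^2$-valued partial sum, i.e.\ sup outside), so the ``alternatively cite Hyt\"onen--Lacey'' remark at the end is a little off for the quantity you wrote down. Your primary route — weighted Carleson--Hunt plus Rubio de Francia extrapolation — does correctly yield the stronger estimate with sup inside, so the argument as a whole stands; but if you want to cite Hyt\"onen--Lacey verbatim, you should keep the supremum outside the $\ell^2_K$-sum as the paper does, which costs nothing since Lemma \ref{gen} (applied to $\vec f$ componentwise, then Minkowski for the Bochner integral) already gives $\sup_\lambda \|S^s_\lambda \vec f(x)\|_{\ell^2_K} \lesssim \sup_\lambda \|S_\lambda \vec f(x)\|_{\ell^2_K}$.
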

\begin{proof}
By Lemma \ref{gen}, we may majorize
\[ F(x) \lesssim \sup_\lambda \left( \sum_{i=1}^K |S_\lambda f_i(x)|^2 \right)^{1/2} \]
uniformly in $s$. The result now follows from Hyt\"{o}nen's and Lacey's \cite[Theorem 1.1]{HL}.
\end{proof}

\begin{lemma}\label{tech,2}
For a fixed $\vec{f} \in \mathcal{H}$, define
the function $G:\RR\to\RR$,
\[ G(x)= \int \min\{ K^{1/2}, J_t(x)^{1/2} \} \ dt\]
Then $\|G\|_{L^2(\RR)}\lesssim \log^{2} K \cdot \| \vec{f} \|_{\mathcal{H}}$.
\end{lemma}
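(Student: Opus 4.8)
The plan is to run the jump-counting argument of Bourgain \cite{B1}, feeding in the variational Carleson bound \eqref{H} in the place occupied there by the variational estimate for averages. The mechanism is that the jump numbers are governed by the variation: abbreviate $W_r(x) := \V^r_{\ell^2_K}(S^s_\lambda \vec f)(x)$ and set
\[ \tau_N(x) := \big|\{\, t > 0 : J_t(x) \geq N \,\}\big|, \qquad N \geq 1 . \]
Since $J_\bullet(x)$ is non-increasing, $\{\,t : J_t(x) \geq N\,\}$ is an interval with left endpoint $0$, so the pointwise inequality \eqref{vec-e20} gives $N^{1/r}\,\tau_N(x) \leq W_r(x)$, that is,
\[ \tau_N(x) \leq W_r(x)\, N^{-1/r} \qquad \text{for every } r \in (0,\infty) . \]
Note $\tau_N(x) < \infty$ for $N \geq 1$ because $\lambda \mapsto S^s_\lambda \vec f(x)$ is continuous on the compact parameter range $\{|\lambda| \ll N^{-s}\}$; what genuinely fails is boundedness of $J_t(x)$ as $t \to 0$, and confining that blow-up is exactly the role of the truncation at $K^{1/2}$ in the definition of $G$.

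First I would reduce $G$ to the quantities $\tau_{2^i}$. By the layer-cake identity $\min\{K^{1/2}, J_t(x)^{1/2}\} = \int_0^{K^{1/2}} \mathbf{1}[J_t(x) > u^2]\, du$, Tonelli's theorem, and the change of variables $v = u^2$, one obtains $G(x) = \tfrac12\int_0^K |\{\,t : J_t(x) > v\,\}|\, v^{-1/2}\, dv$, an integral which converges at $v = 0$ since $J_t$ is integer-valued and $\{\,t : J_t(x) \geq 1\,\}$ has finite measure. Splitting $(0,K)$ into dyadic blocks $[2^i, 2^{i+1})$ and using monotonicity of $\tau_\bullet(x)$ then yields the pointwise estimate
\[ G(x) \lesssim \sum_{0 \leq i \lesssim \log K} 2^{i/2}\, \tau_{2^i}(x) , \]
the contribution of $\{\,t : J_t(x) \geq K\,\}$ being $K^{1/2}\tau_K(x)$, which is absorbed into the top scale $i \approx \log K$.

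The key step is to apply $\tau_{2^i}(x) \leq W_{r_i}(x)\, 2^{-i/r_i}$ with an exponent $r_i$ chosen adaptively in the scale: take $r_i > 2$ with $\tfrac12 - \tfrac1{r_i} = \tfrac1{2(i+1)}$ (so $r_0 = 4$ and $r_i = 2 + \tfrac2i$ for $i \geq 1$), for which $\tfrac{r_i}{r_i - 2} \lesssim i+1$ while
\[ 2^{i/2}\, \tau_{2^i}(x) \leq W_{r_i}(x)\cdot 2^{i(\frac12 - \frac1{r_i})} = 2^{\frac{i}{2(i+1)}}\, W_{r_i}(x) \lesssim W_{r_i}(x) . \]
Summing over $i$, applying Minkowski's inequality, and invoking \eqref{H} once per scale gives
\[ \|G\|_{L^2(\RR)} \lesssim \sum_{0 \leq i \lesssim \log K} \|W_{r_i}\|_{L^2(\RR)} \lesssim \sum_{0 \leq i \lesssim \log K} \frac{r_i}{r_i - 2}\, \|\vec f\|_{\mathcal{H}} \lesssim \sum_{0 \leq i \lesssim \log K} (i+1)\, \|\vec f\|_{\mathcal{H}} \lesssim (\log K)^2\, \|\vec f\|_{\mathcal{H}} , \]
which is the desired bound.

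The one delicate point — and the reason the exponent must vary with the scale — is the bookkeeping that yields $(\log K)^2$ rather than a power of $K$. Pushing $r_i$ toward $2$ is what defeats the scale factor $2^{i(\frac12 - \frac1{r_i})}$; a single fixed $r > 2$ (i.e.\ using $J_t(x)^{1/2} \leq (W_r(x)/t)^{r/2}$ crudely) would instead leave a factor $2^{ci} \approx K^{c}$ in the $i$-th term. But $r_i \to 2$ costs a factor $\tfrac{r_i}{r_i - 2}$ in \eqref{H}, and the balance above makes each of the $O(\log K)$ dyadic scales contribute $O(\log K)$; doing better than $(\log K)^2$ would require a genuinely logarithmic variational estimate, which is not on hand. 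I expect this balancing — together with the minor care needed because $J_t(x)$ is unbounded as $t \to 0$, absorbed by the $K^{1/2}$ cap — to be the entire substance of the argument.
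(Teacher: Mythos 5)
Your proof is correct, and the route is genuinely different from the paper's. The paper's argument starts from the observation that $J_t(x)=0$ for $t\geq 2F(x)$ (with $F$ the maximal function of Lemma~\ref{vector-Carleson-lemma}), splits the $t$-integral into $(0,2F(x)/K^{1/2})$ and $(2F(x)/K^{1/2},2F(x))$, and on the second region applies \eqref{vec-e20} with a \emph{single} $K$-dependent exponent $r$ satisfying $\tfrac12-\tfrac1r=\tfrac1{\log K}$, so that $K^{1/2-1/r}=O(1)$ and the $\int dt/t$ yields one $\log K$; the pointwise bound is $G(x)\lesssim F(x)+\log K\cdot\V^r_{\ell^2_K}(S^s_\lambda\vec f)(x)$, and the second $\log K$ comes from $\tfrac{r}{r-2}$ in \eqref{H}, with the $F(x)$ term estimated by Lemma~\ref{vector-Carleson-lemma}. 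You instead layer-cake in the jump level $v$, decompose dyadically, and apply \eqref{H} once per scale with a scale-adapted $r_i\to 2$. What your route buys is that it dispenses with Lemma~\ref{vector-Carleson-lemma} inside this lemma: the small-$t$ contribution is absorbed by $\tau_1(x)\lesssim W_{r_0}(x)$, so the entire estimate rests on the variational bound \eqref{H} alone. One small inaccuracy in your commentary, not in the proof: the exponent need not actually vary with the scale. A single $K$-dependent choice $\tfrac12-\tfrac1r=\tfrac1{\log K}$ also works inside your dyadic sum, since $\sum_{0\leq i\lesssim\log K}2^{i(\frac12-\frac1r)}\lesssim\log K$ and then $\tfrac{r}{r-2}\approx\log K$ supplies the second factor. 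What is essential in either argument is that $r\to 2$ as $K\to\infty$; the paper fixes one such $r$, you choose it adaptively, and both land on $(\log K)^2$.
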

\begin{proof}
Define $F(x)$ as in (\ref{def-F}).
Note that for $t \geq 2 F(x)$, $J_t(x) = 0$ by the triangle inequality, so that 
\[ G(x)= \int_0^{2F(x)} \min\{ K^{1/2}, J_t(x)^{1/2} \} \ dt.\]
Choosing $r > 2$ so that
\[ \frac{1}{2} - \frac{1}{r} = \frac{1}{\log K},\]
we majorize 
\begin{align*}
 G(x)&\lesssim \int_0^{2F(x)/K^{1/2}} K^{1/2} \ dt + \int_{2F(x)/K^{1/2}}^{2F(x)} K^{1/2-1/r} \left( t \cdot J_t(x)^{1/r} \right) \ \frac{dt}{t}
 \\
  &\lesssim F(x) + \log K \cdot \V^r_{\ell^2_K}(S_\lambda^s \vec{f})(x),
 \end{align*}
 where at the last step we used (\ref{vec-e20}).
We now take the $L^2$ norms. The first term is acceptable by Lemma \ref{vector-Carleson-lemma}. To estimate the second term, we use that the $L^2(\RR)$-operator norm of $\V^r(S_\lambda \vec{f})$ grows like $\frac{r}{r-2}$ by (\ref{H}), so that the result follows.
\end{proof}

We will also use the following maximal lemma on $\ell^2_K$.
\begin{lemma}\label{max}
Suppose $A \subset \ell^2_K$ has finite cardinality $|A|$, and that $\{ \theta_1,\dots, \theta_K\}$ are $s$-separated. 
Then
\begin{align}
\label{eq:key:L2kMoment}
\bigg\| \sup_{a \in A} \bigg| \sum_{i=1}^K a_i e(\theta_i x) e(\theta_i u) \bigg| \bigg\|_{L^{2}_u[0,cN^s]} 
\lesssim N^{s/2} \cdot \min\{ K^{1/2}, |A|^{1/2} \} \sup_{a \in A} \|a \|_{\ell^2_K}.
\end{align}
%Here $0 < c \ll 1$ is a sufficiently small positive number.
\end{lemma}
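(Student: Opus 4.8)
The plan is to treat the two bounds $N^{s/2} K^{1/2} \sup_a \|a\|_{\ell^2_K}$ and $N^{s/2} |A|^{1/2} \sup_a \|a\|_{\ell^2_K}$ separately and then take the minimum. For the first bound, I would discard the supremum by the crude estimate $\sup_{a \in A} |F_a(u)|^2 \le \sum_{a \in A} |F_a(u)|^2$, where $F_a(u) := \sum_{i=1}^K a_i e(\theta_i x) e(\theta_i u)$; but this loses a factor of $|A|$, so instead I would use the sharper route: for \emph{each fixed} $a$, expand $\|F_a\|_{L^2_u[0,cN^s]}^2 = \sum_{i,j} a_i \bar a_j e((\theta_i-\theta_j)x)\int_0^{cN^s} e((\theta_i - \theta_j)u)\,du$. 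The diagonal terms contribute exactly $cN^s \|a\|_{\ell^2_K}^2$. For the off-diagonal terms, $s$-separation gives $|\theta_i - \theta_j| > 2^{-2s-2}$, so $\left|\int_0^{cN^s} e((\theta_i-\theta_j)u)\,du\right| \lesssim |\theta_i - \theta_j|^{-1} \lesssim 2^{2s}$. Summing the off-diagonal contribution against $|a_i||a_j| \le \tfrac12(|a_i|^2 + |a_j|^2)$ and using that there are only $K$ values of the other index gives a bound of $O(2^{2s} K \|a\|_{\ell^2_K}^2)$, which is worse than the diagonal term unless $N^s \gtrsim 2^{2s} K$. This is the standard Plancherel-type argument, but the honest way to get the clean $N^{s/2}K^{1/2}$ bound with the supremum inside is different, so let me reorganize.

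The cleaner approach is as follows. \textbf{Bound by $|A|^{1/2}$:} write $\sup_{a\in A}|F_a(u)| \le \big(\sum_{a \in A} |F_a(u)|^2\big)^{1/2}$, take $L^2_u[0,cN^s]$ norms, interchange the sum and the integral, and apply the single-function estimate $\|F_a\|_{L^2_u[0,cN^s]}^2 \lesssim N^s \|a\|_{\ell^2_K}^2$ — which holds because the functions $u \mapsto e(\theta_i u)$ are, by $s$-separation, an $L^2$-orthogonal-up-to-constants system on an interval of length $\sim N^s$ (Montgomery's large sieve / almost-orthogonality: for $N^s$ larger than a fixed multiple of $2^{2s}$, which we may assume since $N$ is a large dyadic integer and $s$ is fixed, the Gram matrix of $\{e(\theta_i u)\}_{i=1}^K$ on $[0,cN^s]$ is within a bounded factor of $N^s$ times the identity). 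This yields $\|\sup_{a}|F_a|\|_{L^2_u} \lesssim N^{s/2} |A|^{1/2} \sup_a \|a\|_{\ell^2_K}$. \textbf{Bound by $K^{1/2}$:} here I would pointwise dominate, for every $u$, $|F_a(u)| \le \sum_{i=1}^K |a_i| \le K^{1/2} \|a\|_{\ell^2_K}$ by Cauchy–Schwarz, hence $\sup_{a \in A} |F_a(u)| \le K^{1/2} \sup_a \|a\|_{\ell^2_K}$ uniformly in $u$, and integrating over $u \in [0,cN^s]$ (an interval of length $\lesssim N^s$) gives $\|\sup_a |F_a|\|_{L^2_u[0,cN^s]} \lesssim N^{s/2} K^{1/2} \sup_a \|a\|_{\ell^2_K}$. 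Taking the better of the two bounds gives \eqref{eq:key:L2kMoment}.

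The main obstacle is the large-sieve / almost-orthogonality input for the $|A|^{1/2}$ bound: one must verify that the constant in $\|F_a\|_{L^2_u[0,cN^s]}^2 \lesssim N^s\|a\|_{\ell^2_K}^2$ is absolute (independent of $K$ and of the particular $s$-separated frequencies). This follows from the standard large sieve inequality, whose hypothesis "interval length times minimal separation $\gtrsim 1$" is exactly $cN^s \cdot 2^{-2s-2} \gtrsim 1$; since $N$ is chosen to be a sufficiently large dyadic integer (larger than an absolute power of $2^{s}$, with $s$ fixed once and for all), this is satisfied. I would also note the phase factor $e(\theta_i x)$ is harmless — it has modulus one and can be absorbed into $a_i$ — so it plays no role in either estimate. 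Everything else is a routine interchange of sum and integral together with Cauchy–Schwarz.
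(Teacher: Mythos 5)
Your final, reorganized argument is essentially the paper's: the $K^{1/2}$ bound via pointwise Cauchy--Schwarz, and the $|A|^{1/2}$ bound via dominating the supremum by the $\ell^2$-sum over $a\in A$ and then invoking almost-orthogonality of the $s$-separated exponentials on $[0,cN^s]$ (the paper realizes this last step by a change of variables, a bump function, and Plancherel, which is exactly how the large sieve bound you cite is proved). The opening digression with the diagonal/off-diagonal split is a dead end that you correctly abandon, and your remark that the separation--length product $cN^s\cdot 2^{-2s-2}\gtrsim 1$ requires only that the fixed dyadic $N$ be large (not anything depending on $s$ or $K$) is the right sanity check.
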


\begin{proof}
For the $K^{1/2}$ bound, we just use Cauchy-Schwartz. For the cardinality bound, we use a change of variables and
bound the left-hand side of~\eqref{eq:key:L2kMoment} by
\begin{align*}
	N^{s/2} \left( \sum_{a \in A} \bigg\| \sum_{i=1}^K a_i e(\theta_i x) e(\lambda_i u ) \chi (u) \, \Bigg\|_{L^{2}_u(\mathbb{R})}^{2} \right)^{1/2},
\end{align*}
where $\chi$ is a smooth bump function equal to $1$ on $[-1,1]$
and the $\lambda_i$ are $1$-separated real numbers.
The desired bound then follows from Plancherel and the Fourier decay of $\chi$.
\end{proof}

\begin{lemma}\label{ent}
Suppose that $A \subset \ell^2_K$ is a finite subset, and represent elements $a \in A$ as 
$a = (a_1,\dots,a_K)$. Assume that the $\{\theta_i\}$ are $s$-separated.
Then for any $a \in A$, we have the upper bound
\[ \| \sup_{a \in A} \left| \sum_{i=1}^K e(\theta_i x) e(\theta_i u) a_i \right| \|_{L^{2}_u[0,cN^s]} \lesssim
N^{\frac{s}{2}} \cdot \sum_{l < r} 2^l \min\{ K^{1/2}, \mathcal{N}^*_A(2^l)^{1/2} \} + N^{\frac{s}{2}} |a|,\]
where $r$ is such that $2^{r-1} \leq \diam(A) < 2^r$, and $\mathcal{N}^*_A(t)$ denotes the minimum number of balls with radius $t$ centered at elements of $A$ required to cover $A$.\footnote{
This is of course comparable to the entropy number $\mathcal{N}_A(t)$.}
\end{lemma}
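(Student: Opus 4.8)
\emph{Proof plan.} The plan is to prove Lemma~\ref{ent} by a Dudley-type metric‑entropy chaining argument on the finite set $A\subset\ell^2_K$, following Bourgain's method (cf.\ \cite{B1}), with Lemma~\ref{max} supplying the basic increment estimate. Fix the parameter $x$, and for $b=(b_1,\dots,b_K)\in\ell^2_K$ abbreviate $Tb(u):=\sum_{i=1}^{K}e(\theta_i x)e(\theta_i u)b_i$, which is linear in $b$; the case $|A|=1$ of Lemma~\ref{max} already gives $\|Tb\|_{L^2_u[0,cN^s]}\lesssim N^{s/2}\|b\|_{\ell^2_K}$. First I would build a \emph{consistent} system of nets: greedily choose, for each integer $l\le r$, a $2^l$-net $A_l\subseteq A$ with centres in $A$, by adjoining to $A_{l+1}$ a maximal $2^l$-separated subset of the points of $A$ not yet within $2^l$ of $A_{l+1}$. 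This makes the $A_l$ nested, forces $A_r=\{a\}$ (legitimate since $\diam A<2^r$), forces $A_l=A$ once $2^l$ drops below the minimal gap of $A$, and — since each $A_l$ is then itself $2^l$-separated — gives $|A_l|\le\mathcal{N}^*_A(2^{l-1})$. Running upward from the fine scales, I then define $\pi_l:A\to A_l$ by $\pi_l\equiv\mathrm{id}$ for $l$ small and, recursively, $\pi_l(b):=$ a nearest point of $A_l$ to $\pi_{l-1}(b)$; the two features to retain are $\|\pi_{l-1}(b)-\pi_l(b)\|_{\ell^2_K}\le 2^l$ and, crucially, that $\pi_l(b)$ depends on $b$ only through $\pi_{l-1}(b)$.

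The next step is to telescope: for every $b\in A$,
\[ Tb = Ta + \sum_{l\le r} T\big(\pi_{l-1}(b)-\pi_l(b)\big), \]
a finite sum which stabilizes at $\pi_r(b)=a$ at one end and at $\pi_l(b)=b$ at the other. Put $D_l:=\{\pi_{l-1}(b)-\pi_l(b):b\in A\}$. Because $b\mapsto\pi_{l-1}(b)-\pi_l(b)$ factors through $b\mapsto\pi_{l-1}(b)\in A_{l-1}$, we get $|D_l|\le|A_{l-1}|\le\mathcal{N}^*_A(2^{l-2})$; moreover every element of $D_l$ has $\ell^2_K$-norm at most $2^l$, and $D_l=\{0\}$ for $l$ small. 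Taking $L^2_u[0,cN^s]$ norms, applying the triangle inequality, and then invoking Lemma~\ref{max} for each $D_l$ (and for $\{a\}$ in the leading term), I would obtain
\[ \Big\|\sup_{b\in A}|Tb|\Big\|_{L^2_u[0,cN^s]} \lesssim N^{s/2}|a| + \sum_{l\le r} N^{s/2}\,2^{l}\,\min\{K^{1/2},\mathcal{N}^*_A(2^{l-2})^{1/2}\}; \]
reindexing the sum — the bounded shift of the dyadic scale, and the finitely many vanishing levels, being absorbed into $\lesssim$ and into the range $l<r$ — delivers exactly the asserted inequality.

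The one point requiring real care is the construction in the first paragraph: the chain must be organized so that the scale-$l$ increment is parametrized by the \emph{single} coarser net $A_{l-1}$ rather than by a pair of consecutive nets. This is precisely what allows the entropy number to enter with exponent $\tfrac12$ — a naive chain would instead produce the far weaker $\min\{K^{1/2},\mathcal{N}^*_A(2^l)\}$ — and what carries the $\min\{K^{1/2},\cdot\}$ truncation of Lemma~\ref{max} intact through the summation; this refinement is exactly what is needed downstream, once $r$ is balanced logarithmically against $K$. The remaining ingredients — the Plancherel estimate for $s$-separated exponentials (already built into Lemma~\ref{max}) and the bookkeeping of dyadic scales — are routine.
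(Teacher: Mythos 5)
Your proof is correct and follows essentially the same Dudley-type chaining argument as the paper: both telescope over dyadic scales, parametrize the scale-$l$ increments by a single net whose cardinality is controlled by $\mathcal{N}^*_A$, and invoke Lemma~\ref{max} at each scale together with once more for the coarsest term $a$. The only differences are cosmetic bookkeeping — you use nested nets with a chain of projections $\pi_l$, whereas the paper uses independently chosen minimal nets linked by a ``parent function'' — plus a harmless shift of the dyadic scale (your $\mathcal{N}^*_A(2^{l-2})$ against the paper's $\mathcal{N}^*_A(2^l)$) absorbed by reindexing; note only that $A_{l-1}$ is the \emph{finer}, not coarser, net, though this slip does not affect the argument.
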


\begin{proof}
Suppose that $2^{r-1} \leq \diam(A) < 2^r$, and select an arbitrary $a^r \in A$. Then, let 
\[ \{ a^{r-1,j} : 1 \leq j \leq \mathcal{N}^*_A(2^{r-1})  \} \]
be such that
\[ A \subset \bigcup_j B(a^{r-1,j}, 2^{r-1}),\]
and $\{ a^{r-1,j} \}$ is minimal subject to this constraint. Inductively construct collections 
\[ \{ a^{l,j} : 1 \leq j \leq \mathcal{N}^*_A(2^l) \}\] subject to the same minimality condition, for all $l_0 \leq l < r$, where $l_0$ is so small that
\[ B(a,2^{l_0}) \cap A= \{ a\} \]
for each $a \in A$.

Define the implicit \emph{parent function} of a selected element as follows. For each $l_0 \leq l < r$, and each $1 \leq j \leq \mathcal{N}^*_A(2^l)$, define
\[ (a^{l,j})' \]
to be $a^{l+1,k}$ if 
\[ B(a^{l,j}, 2^l) \cap B(a^{l+1,k},2^{l+1}) \neq \emptyset,\]
and $k$ is the minimal index subject to the above constraint. Note that the parent of any $a^{r-1,j}$ is just $a^r$. Collect for each $l_0 \leq l < r$
\[ B_l := \{ a^{l,j} - (a^{l,j})' : 1 \leq j \leq \mathcal{N}^*_A(2^l)  \},\]
and note that
\[ \sup_{b \in B_l} |b| \lesssim 2^l\]
by the triangle inequality, while
\[ |B_l| \leq \mathcal{N}^*_A(2^l).\]

Now, for each $a \in A$, we have the telescoping representation
\[ a = (a-a') + (a' - a'') + \dots + a^r,\]
where each increment lives inside a particular set $B_l$, $l_0 \leq l < r$.

Consequently, we may majorize
\[ \sup_{a \in A} \left| \sum_{i=1}^K e(\theta_i x) e(\theta_i u) a_i \right| \leq
\sum_{l_0 =l}^{r-1} \sup_{b \in B_l} \left| \sum_{i=1}^K e(\theta_i x) e(\theta_i u) b_i \right| + 
\left| \sum_{i=1}^K e(\theta_i x) e(\theta_i u) a^r_i \right|.\]

Taking $L^{2}_u[0,c N^s]$ norms and applying our previous Lemma \ref{max} yields the desired estimate.
\end{proof}

We are now ready for the proof.

\subsection{The Proof}
Our goal is to prove Proposition \ref{0}, reproduced below for convenience:

\begin{proposition}
For any collection of $s$-separated frequencies $\{ \theta_1, \dots, \theta_K \}$, the operator $M$ defined by
\[ Mf(x):= \sup_{|\lambda| \ll N^{-s}} \left| \sum_{i=1}^K e(\theta_i x) S^s_\lambda ( \varphi(N^s\xi) \hat{f}(\xi + \theta_i) )^{\vee}(x) \right| \]
is $L^2(\RR)$-bounded with operator norm $\lesssim \log^{2} K$.
\end{proposition}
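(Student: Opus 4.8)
The plan is to reduce Proposition~\ref{0} to the vector-valued, jump-counting apparatus assembled above, following Bourgain's treatment of his multi-frequency maximal inequality \cite[Lemma 4.11]{B1}, with the variational Carleson theorem (Theorem~\ref{var}, accessed through Lemmas~\ref{gen} and~\ref{tech,2}) playing the role of the variational estimate for averages.

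\emph{Step 1 (pass to the vector-valued picture).} For $1\le i\le K$ I would set
\[ g_i:=\bigl(\varphi(N^s\xi)\,\hat f(\xi+\theta_i)\bigr)^{\vee},\qquad \vec g:=(g_1,\dots,g_K),\]
so that $Mf(x)=\sup_{|\lambda|\ll N^{-s}}\bigl|\sum_{i=1}^K e(\theta_i x)\,S^s_\lambda g_i(x)\bigr|$. The function $x\mapsto e(\theta_i x)g_i(x)$ has Fourier transform supported in $\theta_i+[-2aN^{-s},2aN^{-s}]$; since the $\theta_i$ are $s$-separated, $N$ is a large dyadic integer and $a$ is small, these $K$ intervals are pairwise disjoint, so Plancherel gives the almost-orthogonality bound $\|\vec g\|_{\mathcal H}=\bigl(\sum_i\|g_i\|_{L^2}^2\bigr)^{1/2}\lesssim\|f\|_{L^2}$. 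Moreover, since $|\lambda|\ll N^{-s}$, each $S^s_\lambda g_i$ has Fourier support in one \emph{fixed} interval $B$ of length $\lesssim N^{-s}$ depending only on $a$, uniformly in $i$ and $\lambda$. Writing $\vec G_\lambda:=S^s_\lambda\vec g$, the map $\lambda\mapsto\vec G_\lambda(x)\in\ell^2_K$ is smooth, so all suprema over $\lambda$ below may be taken over a fixed countable dense set, with every quantitative bound uniform over finite subsets.

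\emph{Step 2 (localize at scale $N^s$ and invoke Lemma~\ref{ent}).} Fix a Schwartz function $\omega$ with $\hat\omega\equiv1$ on a suitable dilate of $B$, so that for $c$ small in terms of $a$ and $\delta:=cN^s$ the smooth sampling identity
\[ S^s_\lambda g_i(x)=\sum_{j\in\Z}S^s_\lambda g_i(x_m+j\delta)\,\omega\!\Bigl(\tfrac{x-x_m}{\delta}-j\Bigr)\]
holds. Partition $\RR$ into the intervals $I_m:=x_m+[0,\delta]$; for $x\in I_m$ put $u:=x-x_m\in[0,cN^s]$ and write $e(\theta_i x)=e(\theta_i x_m)e(\theta_i u)$, so that the inner sum equals $\sum_j\omega(\tfrac u\delta-j)\sum_i e(\theta_i u)\,b^{(j,\lambda)}_i$ with $b^{(j,\lambda)}_i:=e(\theta_i x_m)\,S^s_\lambda g_i(x_m+j\delta)$ and $\|b^{(j,\lambda)}\|_{\ell^2_K}=\|\vec G_\lambda(x_m+j\delta)\|_{\ell^2_K}$. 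Taking $\sup_\lambda$, then the $L^2_u[0,\delta]$-norm, and summing over $j$ against the Schwartz decay $|\omega(t-j)|\lesssim(1+|j|)^{-10}$ on $[0,1]$, one is reduced for each $j$ to $\bigl\|\sup_{b\in B^{(j)}}\bigl|\sum_i e(\theta_i u)b_i\bigr|\bigr\|_{L^2_u[0,cN^s]}$, where $B^{(j)}:=\{b^{(j,\lambda)}:|\lambda|\ll N^{-s}\}\subset\ell^2_K$ — precisely the quantity controlled by Lemma~\ref{ent} (the free factor $e(\theta_i x)$ there being unimodular and irrelevant). Since multiplication by $(e(\theta_i x_m))_i$ is unitary on $\ell^2_K$, the covering numbers of $B^{(j)}$ coincide with those of the $\lambda$-curve $\{\vec G_\lambda(x_m+j\delta)\}$, and covering a curve by $t$-balls costs $\lesssim J_t(x_m+j\delta)+1$ of them; combining this with $\diam B^{(j)}\lesssim F(x_m+j\delta)$ and $|b^{(j,\lambda_0)}|\le F(x_m+j\delta)$ (with $F$ as in \eqref{def-F}), and bounding $\sum_{l<r}2^l\min\{K^{1/2},(J_{2^l}+1)^{1/2}\}\lesssim\int_0^\infty\min\{K^{1/2},J_t^{1/2}\}\,dt+\diam B^{(j)}$, Lemma~\ref{ent} yields
\[ \|Mf\|_{L^2(I_m)}\lesssim N^{s/2}\sum_{j\in\Z}(1+|j|)^{-10}\bigl(G(x_m+j\delta)+F(x_m+j\delta)\bigr),\]
with $G$ the function of Lemma~\ref{tech,2}.

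\emph{Step 3 (re-sum and conclude).} Squaring the last display, applying Cauchy--Schwarz in $j$, and summing over $m$ — using that for each fixed $j$ the points $\{x_m+j\delta\}_m$ form a $\delta$-net and that $F,G$, being built from the uniformly band-limited family $\vec G_\lambda$, cannot concentrate below scale $\delta=cN^s$, so that $\sum_m(G+F)^2(x_m+j\delta)\lesssim\delta^{-1}\bigl(\|G\|_{L^2}^2+\|F\|_{L^2}^2\bigr)$ — one obtains $\|Mf\|_{L^2(\RR)}^2\lesssim\|G\|_{L^2}^2+\|F\|_{L^2}^2$. Now Lemma~\ref{tech,2} gives $\|G\|_{L^2}\lesssim\log^2K\,\|\vec g\|_{\mathcal H}$, Lemma~\ref{vector-Carleson-lemma} with $p=2$ gives $\|F\|_{L^2}\lesssim\|\vec g\|_{\mathcal H}$, and Step~1 gives $\|\vec g\|_{\mathcal H}\lesssim\|f\|_{L^2}$, so $\|Mf\|_{L^2}\lesssim\log^2K\,\|f\|_{L^2}$, as claimed.

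\emph{Main obstacle.} The logarithmic-in-$K$ gain is already packaged inside Lemmas~\ref{tech,2} and~\ref{ent} — ultimately it comes from the $r/(r-2)$ growth in the variational Carleson estimate, optimized at $\tfrac12-\tfrac1r=\tfrac1{\log K}$ — so the genuinely technical part of this argument is the localization of Steps~2--3: one must verify that restricting the $\lambda$-supremum to a dense set, freezing $x$ to the lattice $\{x_m+j\delta\}$, and re-assembling the local $L^2$-estimates all cost only absolute constants. This rests on the uniform band-limitedness of $\{S^s_\lambda g_i\}$ at scale $N^{-s}$, which prevents the relevant maximal functions from concentrating below scale $N^s$: for $F$ this is a Bernstein estimate applied to $\|\vec G_\lambda\|_{\ell^2_K}^2$ before the supremum in $\lambda$ is taken, while for $G$ one routes (with some care) through its pointwise domination by $F$ plus $\log K$ times the $\ell^2_K$-valued $r$-variation of $\vec G_\lambda$, exactly as in the proof of Lemma~\ref{tech,2}. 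The parameter $c$ — equivalently, the block length — is pinned down only at the very end, and only in terms of the fixed parameter $a$.
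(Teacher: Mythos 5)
Your argument reaches the same final inequality as the paper, and routes through the same two key lemmas (Lemma~\ref{ent} to control the inner $L^2_u[0,cN^s]$-integral, and Lemma~\ref{tech,2} together with Lemma~\ref{vector-Carleson-lemma} to bound $\|G\|_{L^2}$ and $\|F\|_{L^2}$), but the mechanism by which you localize to scale $N^s$ is genuinely different. The paper proves (b) only \emph{on average} over a translation parameter $u\in[0,cN^s]$: after changing variables, $N^{-s/2}\|\,\|\sup_\lambda|\sum_i e(\theta_iu)e(\theta_ix)S^s_\lambda f_i(x)|\,\|_{L^2_u}\|_{L^2_x}\lesssim\log^2K\|\vec f\|_{\mathcal H}$, and then closes the estimate with a bootstrap: it sets $B$ to be the best constant, proves a priori that $B\lesssim K^{1/2}$ by crude Cauchy--Schwarz plus Lemma~\ref{vector-Carleson-lemma}, shows $\|\vec f-T_u\vec f\|_{\mathcal H}\le\frac12\|\vec f\|_{\mathcal H}$ from the band-limitation, and absorbs $\frac12 B$. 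You instead tile $\RR$ by intervals $I_m$ of length $\delta=cN^s$, use a smooth Shannon-type reconstruction of the uniformly band-limited family $S^s_\lambda g_i$ on the lattice $\{x_m+j\delta\}$, apply Lemma~\ref{ent} at each lattice point, and re-assemble; this eliminates the bootstrap (and the a priori $K^{1/2}$ bound) but requires, in exchange, Plancherel--P\'olya-type sampling inequalities $\sum_m\Phi^2(m\delta)\lesssim\delta^{-1}\|\Phi\|_{L^2}^2$ for $\Phi\in\{F,G\}$.

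That last step is the only place where I would push back a bit. You correctly flag it as the main obstacle and the argument for $F$ (apply the reproducing kernel to each band-limited $\|\vec G_\lambda\|_{\ell^2_K}^2$ and then take the supremum in $\lambda$) is fine. For $G$, routing through $G\lesssim F+\log K\cdot\V^r_{\ell^2_K}(S^s_\lambda\vec g)$ reduces you to the sampling inequality for $\V^r$, but $\V^r$ is a supremum of $\ell^r$-norms of band-limited increments rather than a supremum of band-limited functions, so the $F$-argument does not carry over verbatim. The fix is the Minkowski integral inequality in $\ell^r$: for a fixed increasing sequence $\{\lambda_k\}$, each $d_k:=\|S^s_{\lambda_k}\vec g-S^s_{\lambda_{k-1}}\vec g\|_{\ell^2_K}$ satisfies $d_k(m\delta)\le\int d_k(z)|\phi_\delta(m\delta-z)|\,dz$ with $\phi_\delta$ the reproducing kernel, whence $\|(d_k(m\delta))_k\|_{\ell^r}\le\int\|(d_k(z))_k\|_{\ell^r}|\phi_\delta(m\delta-z)|\,dz\le\int\V^r(z)|\phi_\delta(m\delta-z)|\,dz$; taking the supremum over $\{\lambda_k\}$, squaring, and summing over $m$ gives the desired $\sum_m\V^r(m\delta)^2\lesssim\delta^{-1}\|\V^r\|_{L^2}^2$. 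With that supplied, your proposal is a complete and correct alternative to the paper's bootstrap, slightly more elementary in spirit at the cost of these band-limitation lemmas.
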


\begin{proof}[Proof of Proposition \ref{0}]
First, by continuity we may restrict $\lambda$ to a countable dense subset of $[-cN^{-s},cN^{-s}]$, and by monotone convergence we may restrict to a finite subcollection, $\lambda \in \Delta \subset [-cN^{-s},cN^{-s}]$, provided that our bounds are eventually independent of $\Delta$.

It is helpful to adopt a vector-valued perspective; define via the Fourier transform
\begin{equation}\label{vec-e2}
 \hat{f_i}(\xi) := \varphi(N^s \xi) \hat{f}(\xi + \theta_i) 
 \end{equation}
and note that $\{ f_1,\dots, f_K\}$ are pairwise orthogonal and have Fourier support inside $[-c_0N^{-s},c_0N^{-s}]$, for a suitably small $c_0$. We will then consider $M$ as an operator (for now, depending on $\Delta$) on 
the space $\mathcal{H}=L^{2}(\ell_K^2)$, 
defined by
\[
M\vec{f}(x)=\sup_{\lambda \in \Delta} \left| \sum_{i=1}^K e(\theta_i x) S^s_\lambda f_i(x) \right| ,\ \ 
\vec{f}=(f_1,\dots,f_K).
\]
We need to prove that for $f_1,\dots,f_k$ as in (\ref{vec-e2}),
\begin{equation}\label{vec-e1}
\| M\vec{f} \|_{L^{2}(\mathbb{R})}\lesssim \log^2 K \|\vec{f}\|_\mathcal{H}.
\end{equation}

Let $B$ denote the best constant in (\ref{vec-e1}); that is,
\[ B := \sup_{\|\vec{f}\|_\mathcal{B}=1, \supp \hat{f_i} \subset [-c_0N^{-s},c_0N^{-s}]}
 \| M\vec{f}  \|_{L^{2}(\RR)}.\]
We first claim that 
\begin{equation}\label{vec-e3}
B \lesssim K^{1/2} .
\end{equation}
This does not imply (\ref{vec-e1}) yet, but it will allow us to proceed later
with a bootstrapping argument. To prove (\ref{vec-e3}), we use Cauchy-Schwartz to write
\[ B \leq  \sup_{\|\vec{f}\|_\mathcal{B}=1, \ \supp \hat{f_i} \subset [-c_0N^{-s},c_0N^{-s}]}
K^{1/2} \| \sup_{\lambda} \left( \sum_{i=1}^K |S^s_\lambda f_i|^2 \right)^{1/2} \|_{L^{2}(\RR)},\]
and apply Lemma \ref{vector-Carleson-lemma}.

We now proceed with the proof of (\ref{vec-e1}). For any $u\in\RR$, we have
\[ \| M\vec{f}  \|_{L^{2}(\RR)}\leq  \| M (T_u \vec{f} ) \|_{L^{2}(\RR)}+ \| M (\vec{f} -T_u \vec{f}) \|_{L^{2}(\RR)},\]
 where
 \[
 T_u\vec{f} (x)=(f_1(x-u),\dots, f_K(x-u)).
 \]
 The strategy of the proof is as follows. We will prove that if  $c>0$ is sufficiently small, then

\medskip
(a) for all $u\in[0,c2^s]$, we have
$\|\vec{f} -T_u \vec{f}\|_\mathcal{H}\leq \frac{1}{2}\|\vec{f}\|_\mathcal{H} $,

\medskip
(b) there is a $u\in[0,c2^s]$ such that $\| M (T_u \vec{f} ) \|_{L^{2}(\RR)}\lesssim \log^2 K \|\vec{f}\|_\mathcal{H}$.

\medskip

Let $\vec{f}\in\mathcal{H}$ with $\|\vec{f}\|_\mathcal{H}=1$ and $\supp \hat{f_i} \subset [-c_0N^{-s},c_0N^{-s}]$, and choose $u$ (possibly depending on $\vec{f}$) 
such that both (a) and (b) hold. Then for some $C = O(1)$
\[ \| M\vec{f}  \|_{L^{2}(\RR)}\leq  C \log^2 K + \frac{1}{2} B . \]
Taking the supremum over all admissible $\vec{f}$, we get that 
\[ B \leq C \log^2 K + \frac{1}{2} B,\]
which proves (\ref{vec-e1}).

We first prove (a). This is a direct consequence of Plancherel's inequality, taking into account the Fourier support restriction on $f_i$. Indeed, let $\tilde\varphi$ be a smooth function such that $\tilde\varphi\equiv 1$ on $[-c_0N^{-s},c_0N^{-s}]$
and $\supp \tilde\varphi \subset [-2c_0N^{-s},2c_0N^{-s}]$.
Then
for each $f_i$,
\[ \aligned
\| f_i(x) - f_i(x-u) \|_{L^2(\RR)} &= \| (1-e(-\xi u)) \cdot \tilde\varphi(N^s \xi) \hat{f}(\xi + \theta_i) \|_{L^2(\RR)} \\
&\lesssim
\sup_{|\xi| \leq 2c_0N^{-s}} |\xi u | \cdot \| \tilde\varphi(N^s \xi) \hat{f}(\xi+ \theta_i) \|_{L^2(\RR)} \\
&\ll \frac{1}{2} \|f_i\|_{L^2(\RR)}, \endaligned\]
for $|u| \leq cN^s$ with $c$ sufficiently small.
 
To prove (b), it suffices to show that it holds on average, i.e.,
\[ \aligned 
& N^{- \frac{s}{2}} \| \| \sup_{\lambda \in \Delta} \left| \sum_{i=1}^K e(\theta_i x) S^s_\lambda f_i(x-u) \right| \|_{L^{2}_u[0,cN^s]} \|_{L^{2}_x(\RR)} \\
& \qquad = 
N^{- \frac{s}{2}} \| \| \sup_{\lambda \in \Delta } \left| \sum_{i=1}^K e(\theta_i u) e(\theta_i x) S^s_\lambda f_i(x) \right| \|_{L^{2}_u[0,cN^s]} \|_{L^{2}_x(\RR)} \endaligned \]
is bounded by a constant multiple of 
\[ \log^2 K \| \vec{f} \|_{\mathcal{H}}.\]

With $x \in \RR$ fixed, we consider the set
\[ A = A_x = \{ S^s_\lambda f_1(x), \dots, S^s_\lambda f_K(x) : \lambda \in \Delta \} \subset \ell^2_K.\]
The quantity $\mathcal{N}^*_A(t)$ (defined in Lemma \ref{ent}) is bounded by $J_t(x)+1$; in
particular, for any $t < \diam(A)$ we have $\mathcal{N}^*_A(t) \lesssim J_t(x)$.
Applying Lemma \ref{ent}, we majorize the inner integral by a constant multiple of
\[ \int_0^{2F(x)} \min\{ K^{1/2}, J_t(x)^{1/2} \} \ dt  + \left( \sum_{i=1}^K |S^s_{\lambda_0} f_i(x)|^2 \right)^{1/2} \]
where $\lambda_0 \in \Delta$ is arbitrary, and 
\[ F(x) := \sup_\lambda \left( \sum_{i=1}^K |S^s_\lambda f_i|^2 \right)^{1/2} \]
is as above.

Taking $L^{2}(\RR)$ norms and applying Lemma \ref{tech,2} now completes the proof.
\end{proof}

Finally, we have the following straightforward result, which we will use for interpolation purposes in $\S 5$ below:

\begin{proposition}\label{2}
For any $1 < p < \infty$, and any collection of frequencies $\{ \theta_1, \dots, \theta_K \}$, we have the following estimate,
\[ \sup_{|\lambda| \ll N^{-s}} \left| \sum_{i=1}^K e(\theta_i x) S^s_\lambda ( \varphi(N^s\xi) \hat{f}(\xi + \theta_i) )^{\vee}(x) \right| \]
is $L^p(\RR)$-bounded with operator norm $\lesssim K^{\theta(p)}$, where
\[ \theta(p) := \begin{cases} \frac{1}{p} &\mbox{if } 1 < p \leq 2  \\ 
\frac{1}{2} & \mbox{if } 2 \leq p < \infty. \end{cases}\]
\end{proposition}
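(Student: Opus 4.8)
The plan is to obtain the estimate by interpolating two bounds, together with a separate direct argument in the range $p\ge 2$. Throughout, abbreviate $f_i:=\bigl(\varphi(N^s\xi)\hat f(\xi+\theta_i)\bigr)^{\vee}$. Unwinding the Fourier transform one has $f_i=e(-\theta_i\,\cdot)\,(P_if)$, where $\widehat{P_if}(\xi):=\varphi\bigl(N^s(\xi-\theta_i)\bigr)\hat f(\xi)$ is the smooth frequency projection of $f$ onto $I_i:=\{|\xi-\theta_i|\le 2aN^{-s}\}$; in particular $|f_i|=|P_if|$, and---the $s$-separation hypothesis of this section being in force and $N$ a large dyadic integer---the intervals $I_i$ are pairwise disjoint. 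First I would record a \emph{crude bound}: by the triangle inequality our operator is pointwise at most $\sum_{i=1}^K\sup_\lambda|S^s_\lambda f_i|$; by Lemma \ref{gen} each summand is $\lesssim\sup_\lambda|S_\lambda f_i|$, the Carleson maximal operator applied to $f_i$, which is $L^q(\RR)$ bounded for every $1<q<\infty$ (the scalar case of \cite{HL}); and since $f_i$ is a convolution of $e(-\theta_i\,\cdot)f$ with the fixed $L^1$ kernel $\bigl(\varphi(N^s\,\cdot)\bigr)^{\vee}$, Young's inequality gives $\|f_i\|_{L^q}\lesssim\|f\|_{L^q}$ uniformly in $i,N,s$. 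Summing $K$ terms, the operator norm on $L^q$ is $\lesssim_q K$ for every $1<q<\infty$.

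For the range $2\le p<\infty$ I would argue directly. Cauchy--Schwarz in $i$ bounds our operator pointwise by $K^{1/2}\sup_\lambda\bigl(\sum_{i=1}^K|S^s_\lambda f_i|^2\bigr)^{1/2}=K^{1/2}F$, with $F$ exactly the function appearing in Lemma \ref{vector-Carleson-lemma}; that lemma gives $\|F\|_{L^p}\lesssim_p\bigl\|(\sum_i|f_i|^2)^{1/2}\bigr\|_{L^p}=\bigl\|(\sum_i|P_if|^2)^{1/2}\bigr\|_{L^p}$. Since the $I_i$ are pairwise disjoint intervals, Rubio de Francia's Littlewood--Paley inequality for arbitrary families of disjoint intervals bounds this by $\lesssim_p\|f\|_{L^p}$ for $2\le p<\infty$, with a constant depending on $p$ alone. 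This gives operator norm $\lesssim_p K^{1/2}=K^{\theta(p)}$ on $L^p$ for $2\le p<\infty$; in particular $p=2$ is covered (alternatively, the case $p=2$ is immediate from Proposition \ref{0}, since $\log^2K\lesssim K^{1/2}$).

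For $1<p<2$ I would interpolate Proposition \ref{0}---which, as the $\theta_i$ are $s$-separated, gives operator norm $\lesssim\log^2K$ on $L^2$---against the crude bound on $L^{p_0}$ for a suitable $p_0\in(1,p)$. As the operator is a supremum of a family of linear multiplier operators, (vector-valued) interpolation yields operator norm $\lesssim_{p,p_0}K^{1-t}(\log^2K)^{t}$ on $L^p$, where $\tfrac1p=\tfrac{1-t}{p_0}+\tfrac t2$. One checks that $1-t$ decreases to $\tfrac2p-1$ as $p_0\downarrow 1$, and $\tfrac2p-1<\tfrac1p$ because $p>1$; fixing $p_0$ close enough to $1$ that $1-t<\tfrac1p$, the logarithmic factor is absorbed by a positive power of $K$ and one obtains operator norm $\lesssim_p K^{1/p}=K^{\theta(p)}$.

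The argument is soft modulo the earlier results, so there is no single hard step; the points that need care are bookkeeping ones. First, in the range $1<p<2$ it is essential to feed in Proposition \ref{0}'s \emph{sharp} $\log^2K$ bound rather than just $K^{1/2}$: interpolating $K^{1/2}$ against $K$ would only produce $K^{1/p+\varepsilon}$, not the claimed $K^{1/p}$. Second, the $s$-separation hypothesis of this section must be invoked throughout---it is what makes Proposition \ref{0} applicable and what makes the intervals $I_i$ genuinely disjoint, so that Rubio de Francia's inequality can be used with a $K$-independent constant in the range $p\ge2$. If one only wanted the range $\tfrac32<p<4$ relevant to the main theorem, the appeal to Rubio de Francia could instead be replaced by interpolating Proposition \ref{0} against the crude bound on a large $L^q$.
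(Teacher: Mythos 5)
Your proof is correct. For $2\le p<\infty$ it is essentially the paper's argument: Cauchy--Schwarz to pull out $K^{1/2}$, Lemma~\ref{vector-Carleson-lemma} to pass to the vector Carleson maximal function, and Rubio de Francia on the resulting square function. For $1<p<2$ you take a genuinely different route: the paper keeps the same Cauchy--Schwarz/square-function structure and instead shows, following \cite[Proposition 4.3]{K}, that the square function $\bigl(\sum_i |P_i f|^2\bigr)^{1/2}$ has weak $(1,1)$ norm $\lesssim K^{1/2}$, interpolates against the Plancherel $L^2$ bound to get the sharp $L^p$ estimate $K^{1/p-1/2}$, and multiplies by $K^{1/2}$ --- all without invoking Proposition~\ref{0}. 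Your alternative, interpolating Proposition~\ref{0}'s $\log^2 K$ bound on $L^2$ against the $O(K)$ triangle-inequality bound on $L^{p_0}$ with $p_0$ close to $1$, also yields $K^{1/p}$ after absorbing the logarithm, but it trades the external weak-type square function estimate for a dependence on the heavier Proposition~\ref{0}. The paper's version is more self-contained (it deliberately presents Proposition~\ref{2} as a ``straightforward'' companion estimate used later only for interpolation); yours is shorter given that Proposition~\ref{0} is already available. Both are valid, and since Proposition~\ref{0} does not rest on Proposition~\ref{2}, there is no circularity.

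You also correctly read the $s$-separation hypothesis into the statement even though the proposition literally says ``any collection of frequencies.'' That phrase is an oversight: without separation (e.g., all $\theta_i$ equal) the operator has norm $\approx K$ on every $L^p$, contradicting $K^{\theta(p)}$ for $p>1$, and the paper's own proof tacitly uses separation both in the Plancherel/$L^2$ bound on the square function and in the Rubio de Francia step. Your explicit invocation of $s$-separation --- both for the disjointness of the intervals $I_i$ and for Proposition~\ref{0} --- is the right reading, and it matches how the proposition is actually applied in $\S 5$, where the $\theta_i$ always come from $\R_s$.
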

\begin{proof}
We use Cauchy-Schwartz in $1 \leq i \leq K$ and then Lemma \ref{gen} to estimate
\[
\sup_{|\lambda| \ll N^{-s}} \left| \sum_{i=1}^K e(\theta_i x) S^s_\lambda ( \varphi(N^s\xi) \hat{f}(\xi + \theta_i) )^{\vee} \right| \lesssim K^{1/2} F(x),\]
where 
\[ F(x) := \sup_{\lambda} \left( \sum_{i=1}^K | S_\lambda \left( \varphi(N^s\xi) \hat{f}(\xi+\theta_i) \right) |^2 \right)^{1/2}(x).\]
Taking $L^p(\RR)$ norms and applying Lemma \ref{vector-Carleson-lemma} shows that the maximal function is bounded in norm by
\[ K^{1/2} \| \left( \sum_{i=1}^K | \F^{-1} \left( \varphi(N^s(\xi - \theta_i)) \hat{f}(\xi) \right) |^2 \right)^{1/2} \|_{L^p(\RR)},\]
by an easy change of variables. By Rubio de Francia's inequality \cite{R} (see also \cite{L} for a nice exposition), we have
\[ \| \left( \sum_{i=1}^K | \F^{-1} \left( \varphi(N^s(\xi - \theta_i)) \hat{f}(\xi) \right) |^2 \right)^{1/2} \|_{L^p(\RR)} \lesssim \|f\|_{L^p(\RR)} \]
for $2 \leq p < \infty$.
By the argument of \cite[Proposition 4.3]{K}, we also have that
\[ \| \left( \sum_{i=1}^K | \F^{-1} \left( \varphi(N^s(\xi - \theta_i)) \hat{f}(\xi) \right) |^2 \right)^{1/2}  \|_{L^{1,\infty}(\RR)} \lesssim K^{1/2} \|f\|_{L^1(\RR)}, \]
which implies that
\[ \| \left( \sum_{i=1}^K | \F^{-1} \left( \varphi(N^s(\xi - \theta_i)) \hat{f}(\xi) \right) |^2 \right)^{1/2}  \|_{L^{p}(\RR)} \lesssim K^{1/p - 1/2} \|f\|_{L^p(\RR)}, \]
for $1 < p \leq 2$ by interpolation. Multiplying by $K^{1/2}$ completes the result.
\end{proof}

\begin{remark} A natural question to ask is the correct operator-norm growth of the ``multi-frequency Carleson operator''
\[ \sup_\lambda \left| \sum_{i=1}^K e(\theta_i x) S^s_\lambda ( \varphi(N^s\xi) \hat{f}(\xi + \theta_i) )^{\vee} (x) \right|, \]
where $\{\theta_1,\dots,\theta_K\}$ are $s$-separated. Stronger estimates on this operator will lead to an improved range of $\ell^p(\Z)$ estimates in Theorem \ref{Main}, see $\S 5$ below.
We anticipate returning to this problem in the future.
\end{remark}

\section{Reduction to Multi-Frequency Carleson}

We now reduce the proof of Theorem \ref{Main} to estimating a multi-frequency Carleson operator of the form considered in Section 3. We first note that Theorem \ref{Main} can be rewritten as a maximal multiplier problem. Specifically, we have
\begin{align*}
\Ca_\Lambda f(n) &= \sup_{\lambda \in \Lambda} \left| \sum_{p \in \pm \PP} f(n-p) \log |p| \frac{e(\lambda p)}{p} \right|\\
 &= \sup_{\lambda \in \Lambda} \left| f*\mathcal{K}_\lambda (n) \right|,
\end{align*}
with
 \[
 \mathcal{K}_\lambda=  \sum_{p \in \pm \PP} \log |p| \frac{e(\lambda p)}{p}
 \delta_p.
 \]
 Using Fourier transforms, we rewrite this as 
 \[
 \Ca_\Lambda f(n) = \sup_{\lambda \in \Lambda}  \left| \mathcal{F}^{-1}
 (m(\lambda,\cdot) \widehat{f})(n) \right|,
\]
where
 \[ m(\lambda,\beta) := \widehat{\mathcal{K}_\lambda}(\beta) =\sum_{p \in \pm \PP} \frac{e(\lambda p -\beta p) \log|p|}{p} .\]

\subsection{Number-Theoretic Approximations}

Following Mirek-Trojan \cite[\S 3]{MT}, we first introduce an intermediate number-theoretic approximation. 
Throughout this section and the following one, $1 \ll \alpha \lesssim_p 1$ will be a sufficiently large but fixed constant.

We begin by dyadically decomposing 
\[ m(\lambda,\beta) := \sum_{j\geq 2} m_j(\lambda,\beta) := \sum_{j \geq 2} \left( \sum_{p \in \pm \PP} e(\lambda p -\beta p) \log|p| \psi_j(p) \right),\]
where $\sum_{j \geq 2} \psi_j(t) = \frac{1}{t}$ for $|t| \geq 2$.
Each $m_j(\lambda,\beta)$ is the Fourier transform (with $n$ and $\beta$ as the dual variables) of the kernel
\[ K_j(\lambda,n) := \sum_{p \in \pm \PP} e(\lambda p) \log |p| \psi_j(p) \delta_p(n).\]

We consider the approximating multipliers 
\[ \nu_j(\beta) := \sum_{s \geq 0 : 2^s < \frac{2^{j-1}}{j^{2\alpha}}} \nu_j^s(\beta) :=
\sum_{s \geq 0 : 2^s < \frac{2^{j-1}}{j^{2\alpha}}} \sum_{\R_s} \frac{\mu(q)}{\varphi(q)} \widehat{\psi_j}(\beta - a/q) \chi_s( \beta - a/q). \]
These are identical to the multipliers from \cite[Proposition 3.2]{MT}, except for
the cut-off $2^s < \frac{2^{j-1}}{j^{2\alpha}}$ in $s$.
Here, $\R_s := \{ a/q \in \TT \cap \QQ : (a,q) = 1, 2^s \leq q < 2^{s+1} \}$, $\mu$ is the M\"{o}bius function, $\varphi$ is the totient function,
$\chi$ is a Schwartz function satisfying
\[ 1_{[-c/N,c/N]} \leq \chi \leq 1_{[-2c/N,2c/N]},\]
and 
\[ \chi_s(t) := \chi(N^st)\]
where $N$ is our large enough dyadic integer. We will often use the well known estimate on the growth of the totient function: for any $\epsilon > 0$, there exists an absolute constant $C_\epsilon > 0$ so that
\begin{equation}\label{phi-growth}
 \varphi(q) \geq C_{\epsilon} q^{1-\epsilon}.
 \end{equation}
We also note here that the multipliers $\nu_j$ depend on the value of the constant $\alpha$ introduced above. Since $\alpha$ will be fixed throughout the rest of this paper, we will not display that dependence.

We then have the following.

\begin{proposition}\label{app}
For every $\alpha > 16$,
\[ |E_j(\lambda,\beta)| := |m_j(\lambda,\beta) - \nu_j(\beta- \lambda)| \lesssim_\alpha j^{-\alpha/4}.\]
\end{proposition}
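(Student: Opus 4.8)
The plan is to follow the Mirek--Trojan circle method analysis \cite[\S 3]{MT} essentially verbatim, tracking the effect of the extra modulation parameter $\lambda$ and the extra cutoff $2^s < 2^{j-1}/j^{2\alpha}$. First I would observe that the multiplier $m_j(\lambda,\beta)$ is, by definition, the exponential sum $\sum_{p\in\pm\PP} e((\lambda-\beta)p)\log|p|\,\psi_j(p)$, so \emph{it depends on $\lambda$ and $\beta$ only through the difference $\beta-\lambda$}: writing $\gamma := \beta-\lambda$, we have $m_j(\lambda,\beta) = M_j(\gamma)$ where $M_j(\gamma) := \sum_{p\in\pm\PP} e(-\gamma p)\log|p|\,\psi_j(p)$, and the claim $|m_j(\lambda,\beta)-\nu_j(\beta-\lambda)| \lesssim_\alpha j^{-\alpha/4}$ reduces to the purely one-variable estimate $|M_j(\gamma) - \nu_j(\gamma)| \lesssim_\alpha j^{-\alpha/4}$, uniformly in $\gamma\in\TT$. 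So the modulation symmetry trivializes this step and the real content is the number-theoretic approximation of a single exponential sum over primes, exactly as in \cite{MT}.

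Next I would run the Hardy--Littlewood circle method on $M_j(\gamma)$. By Dirichlet's theorem every $\gamma$ admits an approximation $\gamma = a/q + \theta$ with $(a,q)=1$, $q \le Q$, $|\theta| \le 1/(qQ)$ for a parameter $Q$ to be chosen as a suitable power of $2^j/\text{poly}(j)$. On the \emph{major arcs}, where $q$ is at most a fixed power of $j$ and $|\theta|$ is small relative to $2^{-j}$, one uses the prime number theorem in arithmetic progressions (with classical error term, which suffices since $q$ is only a power of $\log$ of the length) together with partial summation against the smooth weight $\log|p|\,\psi_j(p)$; this produces the main term $\frac{\mu(q)}{\varphi(q)}\widehat{\psi_j}(\gamma - a/q)$, and the Gauss-sum/Ramanujan-sum computation identifies the arithmetic factor. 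Summing over $s$ with $2^s \le q < 2^{s+1}$ gives the sum defining $\nu_j$, and the cutoff $2^s < 2^{j-1}/j^{2\alpha}$ is exactly the statement that we only keep denominators $q \lesssim 2^j/j^{2\alpha}$ on the major-arc side; the tail $q \gtrsim 2^j/j^{2\alpha}$ of the model sum $\nu_j$ is estimated directly, using $|\widehat{\psi_j}| \lesssim 2^{-j}\min(1, (2^j|\gamma-a/q|)^{-1})$, $|\mu(q)/\varphi(q)| \le \varphi(q)^{-1}$, the totient bound \eqref{phi-growth}, and the support of $\chi_s$, to be $\lesssim_\alpha j^{-\alpha/4}$ (this is where the precise exponent $\alpha/4$, and the requirement $\alpha > 16$, comes from after balancing against the $2^s$-summation and the $\epsilon$-loss in \eqref{phi-growth}).

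On the \emph{minor arcs} — $\gamma$ whose best rational approximation has denominator $q$ lying between a power of $j$ and $2^j/j^{2\alpha}$, or with $|\theta|$ not too small — one invokes Vinogradov's estimate for exponential sums over primes (in the smoothed form, e.g. via Vaughan's identity applied to $\sum_p e(-\gamma p)\log p\,\psi_j(p)$), which gives a power saving $\lesssim 2^j (\log 2^j)^{C} (q^{-1/2} + 2^{-j/2} + (q 2^{-j})^{1/2})$, hence after dividing by the normalization $2^j$ a bound $\lesssim j^{C}(q^{-1/2} + \cdots)$. Choosing the major/minor cutoff so that $q \ge j^{B}$ on the minor arcs with $B$ large (in terms of $\alpha$) makes this $\lesssim_\alpha j^{-\alpha/4}$; on this range one also checks that the model multiplier $\nu_j$ is itself $\lesssim_\alpha j^{-\alpha/4}$ (since it only sees denominators $q\ge j^{B}$ there, and the same totient/decay estimates apply), so the difference is small. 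The main obstacle — really the only place requiring care rather than bookkeeping — is the minor-arc analysis: one must verify that the Vinogradov--Vaughan bound survives multiplication by the smooth weight $\log|p|\,\psi_j(p)$ (routine partial summation, since $\psi_j$ has bounded variation at scale $2^j$) and, more importantly, that the arc decomposition is compatible with the fixed choice of cutoff $2^s < 2^{j-1}/j^{2\alpha}$ defining $\nu_j$, so that every $\gamma$ is handled by \emph{either} a major-arc approximation reproducing a term of $\nu_j$ \emph{or} a minor-arc bound, with the error in both cases $\lesssim_\alpha j^{-\alpha/4}$; getting the exponents to line up (and pinning down the threshold $\alpha>16$) is the delicate part. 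All of this is exactly \cite[Proposition 3.2]{MT} with the denominator range truncated, so I would cite their computation for the bulk and only spell out the truncation tail and the $\lambda$-independence reduction above.
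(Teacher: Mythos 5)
Your proposal is correct and takes essentially the same route as the paper, which itself simply cites \cite[Proposition 3.2]{MT} and asserts that the cut-off modification is routine; your reduction via $\gamma=\beta-\lambda$ and your outline of the major/minor-arc argument plus the truncation-tail estimate is exactly the ``easy modification'' the paper is invoking. (One small slip: $\widehat{\psi_j}(\xi)=\widehat{\psi}(2^j\xi)$, so $|\widehat{\psi_j}|\lesssim\min(1,(2^j|\xi|)^{-M})$ with no extra $2^{-j}$ prefactor; this is harmless for the tail estimate but worth correcting.)
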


For the similar multipliers without the cut-off in $s$, this is Proposition 3.2 of \cite{MT}. The proof of that proposition can be modified easily to produce Proposition \ref{app}. We omit the details.

\subsection{Cleaning Up the Multipliers}
Our first goal now is to approximate $m(\lambda,\beta) = \sum_{j\geq 2} m_j(\lambda,\beta)$, acting as a multiplier on $\ell_p(\mathbb{Z})$, by $\sum_{j\geq 2} \nu_j(\lambda,\beta)$. To this end, we introduce the  ``error function"
\[ \mathcal{E}f(n) := \sup_{\lambda \in \Lambda} \left| \left( \sum_{j\geq 2} E_j(\lambda,\beta) \hat{f}(\beta) \right)^{\vee} \right|(n) \]
in $\ell^p(\Z)$, with $E_j(\lambda,\beta)$ defined in Proposition \ref{app}.

\begin{lemma}\label{error-est}
For any $1<p< \infty$, $\mathcal{E}f$ is bounded on $\ell^p(\Z)$, provided $\alpha$ is taken sufficiently large.
\end{lemma}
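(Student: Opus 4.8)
The plan is to sum the error estimate from Proposition~\ref{app} dyadically, exploiting two different bounds on each $E_j(\lambda,\cdot)$ as a Fourier multiplier and then invoking Lemma~\ref{Mink} to pass from the fixed-$\lambda$ bounds to the supremum over the pseudo-lacunary set $\Lambda$. First I would observe that for fixed $\lambda$ and each $j$, the operator with multiplier $E_j(\lambda,\beta)$ obeys two kinds of estimate. On the one hand, Proposition~\ref{app} gives the uniform pointwise bound $\|E_j(\lambda,\cdot)\|_{L^\infty(\TT)}\lesssim_\alpha j^{-\alpha/4}$, which by Plancherel yields an $\ell^2(\Z)\to\ell^2(\Z)$ operator norm of size $\lesssim_\alpha j^{-\alpha/4}$. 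On the other hand, $m_j(\lambda,\cdot)$ and $\nu_j(\cdot-\lambda)$ are each individually bounded on every $\ell^p(\Z)$, $1<p<\infty$, with operator norm $O(1)$ independent of $j$ (for $m_j$ this is the truncated singular-integral-along-primes estimate underlying Theorem~\ref{cot} / \cite{MT}, and for $\nu_j$ it is the standard major-arc multiplier bound, again from \cite{MT}); hence $\|E_j(\lambda,\cdot)\|_{\ell^p\to\ell^p}\lesssim_p 1$ uniformly in $j$ and $\lambda$. Interpolating these two facts gives, for every $1<p<\infty$, a bound $\|E_j(\lambda,\cdot)\|_{\ell^p\to\ell^p}\lesssim_{p,\alpha} j^{-\theta_p\alpha/4}$ for some $\theta_p>0$ depending only on $p$, uniformly in $\lambda\in[0,1]$. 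Summing in $j\ge 2$ shows that for \emph{fixed} $\lambda$ the operator $f\mapsto\big(\sum_{j\ge 2}E_j(\lambda,\beta)\hat f(\beta)\big)^\vee$ is bounded on $\ell^p(\Z)$ with norm $\lesssim_{p,\alpha}1$, once $\alpha$ is taken large enough that $\theta_p\alpha/4>1$.

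To upgrade this to the supremum over $\lambda\in\Lambda$ I would invoke Lemma~\ref{Mink} with $d=0$ (pseudo-lacunary sets have zero upper Minkowski dimension, and $C_0=\sup_\delta\mathcal N(\delta)$ is finite — indeed the pseudo-lacunarity hypothesis $C_{1/j}\le Aj^M$ is exactly what is needed to make the $C_d^{1/p}$ and $a^{1-d/p}A^{d/p}$ factors harmless as $d\to0$; in practice one applies Lemma~\ref{Mink} at a small positive $d$ chosen at the end). For this I need the derivative bound $A:=\sup_{\lambda}\|\partial_\lambda T_\lambda\|_{\ell^p\to\ell^p}$ to be at most polynomial in something summable against the $j^{-\theta_p\alpha/4}$ gain. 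Differentiating the kernel $K_j(\lambda,n)$ in $\lambda$ brings down a factor of $2\pi i p$, and since $\psi_j$ is supported on $|p|\approx 2^j$ this costs a factor $O(2^j)$; the corresponding crude estimate (triangle inequality, or $\ell^2$ via Plancherel using $|\partial_\lambda E_j(\lambda,\beta)|\lesssim 2^j\cdot(\text{something})$) gives $\|\partial_\lambda E_j(\lambda,\cdot)\|_{\ell^p\to\ell^p}\lesssim_p 2^{Cj}$ for an absolute $C$. Applying Lemma~\ref{Mink} termwise with $a_j\lesssim j^{-\theta_p\alpha/4}$ and $A_j\lesssim 2^{Cj}$ gives a contribution $\lesssim C_d^{1/p}\big(a_j+a_j^{1-d/p}A_j^{d/p}\big)\lesssim C_d^{1/p}\,j^{-\theta_p\alpha/4}\,2^{Cdj/p}$, which is summable in $j$ provided $d$ is chosen small enough (depending on $p$ and $C$) that $2^{Cdj/p}$ is dominated by, say, $j^{\theta_p\alpha/8}$ — it is not, for fixed $d>0$, so instead one applies Lemma~\ref{Mink} to the \emph{whole} sum $\sum_{j\ge 2}E_j(\lambda,\cdot)$ at once, using $a\lesssim_{p,\alpha}1$ from the previous paragraph and a total derivative bound $A\lesssim 2^{CJ}$ after first splitting off the tail $j>J$ (which is already summable without any $\lambda$-uniformity issue since $j^{-\theta_p\alpha/4}$ is summable and the $\ell^\infty_\lambda$-sup of a sum is at most the sum of $\ell^\infty_\lambda$-sups — wait, that is false for suprema, so one genuinely must absorb the tail using the $d>0$ room). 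The clean way: fix $d\in(0,1)$ tiny, split $\sum_{j\ge2}=\sum_{2\le j\le J}+\sum_{j>J}$ where $J=J(\delta)$ grows slowly, handle the head with Lemma~\ref{Mink} (finitely many terms, derivative bound $2^{CJ}$, gain $C_d^{1/p}2^{CdJ/p}$ against the $\delta$-net), and handle the tail by a direct entropy/Sobolev-type bound of the same flavour, using that each tail term already has an $\ell^p$-norm gain $j^{-\theta_p\alpha/4}$ and a trivial $2^{Cj}$ derivative, summed against $\mathcal N(1/j)\le Aj^M$ exactly as in the proof of Lemma~\ref{Mink} itself.

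I expect the main obstacle to be bookkeeping the interplay between the $j$-summation and the $\lambda$-supremum: the naive ``sum over $j$, then take sup over $\Lambda$'' fails because the sup does not commute with the sum, while ``take sup over $\Lambda$, then sum over $j$'' requires a $\lambda$-uniform maximal estimate for each fixed $j$, and Lemma~\ref{Mink} applied termwise loses a factor $2^{Cdj}$ that is not beaten by the $j^{-\theta_p\alpha/4}$ gain for any fixed $d>0$. The resolution is to run the entropy argument behind Lemma~\ref{Mink} directly on the full sum, so that the polynomial control $C_{1/j}\le Aj^M$ afforded by pseudo-lacunarity is weighed against the super-exponential-in-$j$ derivative growth \emph{scale by scale}, with the $j^{-\theta_p\alpha/4}$ factor (made as strong as desired by enlarging $\alpha$) ensuring convergence; this is precisely the ``apply this lemma at many different scales as well as vary the value of $d$'' remark preceding Lemma~\ref{Mink}. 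Everything else — the interpolation producing the $j^{-\theta_p\alpha/4}$ gain, the $O(1)$ $\ell^p$-boundedness of $m_j$ and $\nu_j$, the $O(2^{Cj})$ derivative bound — is routine given \cite{MT} and the tools already assembled in~\S2.
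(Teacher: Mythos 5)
Your first paragraph is essentially right, and you come tantalisingly close, but you miss the one idea that closes the argument: in the termwise application of Lemma~\ref{Mink} you must take $d = d_j = 1/j$, \emph{varying with the dyadic scale $j$}, rather than choosing a single small $d>0$ once and for all. With the derivative bound $A_j \lesssim N^j$ (the paper obtains $N^j$ via Lemma~\ref{KK}; your $2^{Cj}$ is the same order since $N$ is a fixed dyadic integer) and $d=1/j$, the problematic factor becomes $A_j^{d/p} = N^{j\cdot(1/j)/p} = N^{1/p}$, a \emph{constant}: the exponential-in-$j$ derivative growth is completely neutralised because the exponent $jd=1$ stays bounded. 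Meanwhile pseudo-lacunarity gives $C_{1/j}\lesssim j^M$, and $a_j^{1-d/p} = (j^{-\delta_p})^{1-\frac{1}{jp}}\approx j^{-\delta_p}$, so each term of the $j$-sum is $\lesssim j^{M-\delta_p/2}$, which is summable once $\alpha$ (hence $\delta_p$) is large enough. This is exactly what the remark preceding Lemma~\ref{Mink} (``vary the value of $d$'') is pointing at; you quote it but never actually try the $j$-dependent choice.

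Because you miss this, the second half of your proposal --- the claim that termwise Lemma~\ref{Mink} must fail, the proposed head/tail split at $j=J$, and the suggestion to redo the entropy argument from scratch --- is solving a problem that does not exist. Your worry about the supremum not commuting with the sum is also misplaced: the paper (and your own first paragraph, implicitly) bounds $\|\mathcal{E}f\|\le\sum_j\|\sup_{\lambda\in\Lambda}|(E_j(\lambda,\cdot)\hat f)^\vee|\|$ by the triangle inequality, which is perfectly legitimate, and the only question is whether the right-hand side converges; the $d=1/j$ choice is what makes it converge. One minor simplification: the $O(1)$ $\ell^p$-bound on the multiplier $m_j(\lambda,\cdot)$ for fixed $j$ follows from Young's inequality and the prime number theorem (since $\|K_j(\lambda,\cdot)\|_{\ell^1(\Z)}\lesssim1$ uniformly in $j$ and $\lambda$); invoking the full truncated singular-integral theorem of~\cite{MT} is unnecessary.
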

\begin{proof}
We first claim that  for any $0 \leq \lambda \leq 1$,
\begin{equation}\label{nt-e1}
 \| \left( \nu_j(\beta - \lambda) \hat{f}(\beta) \right)^{\vee} \|_{\ell^p(\Z)} \lesssim \|f \|_{\ell^p(\Z)},
 \end{equation}
and
\begin{equation}\label{nt-e2}
 \| \left( \partial_\lambda \nu_j(\beta - \lambda) \hat{f}(\beta) \right)^{\vee} \|_{\ell^p(\Z)} \lesssim N^j \|f \|_{\ell^p(\Z)},
 \end{equation}
for each $1< p < \infty$.
We first consider (\ref{nt-e1}) with $\nu_j$ replaced by $\nu_j^s$ for a fixed $s$. 
To this end, we apply Lemma \ref{KK} with $|X|=|\mathcal{R}_s|\lesssim  2^{2s}$. Using that 
\[ \aligned 
\|\widehat{\psi_j}  \chi_s \|_{\mathcal{V}^2(\RR)}  &\leq \| \partial( \widehat{\psi_j}  \chi_s ) \|_{L^1(\RR)} \\
&\leq \| \partial (\widehat{\psi_j} ) \chi_s \|_{L^1(\RR)} + \| \widehat{\psi_j} \partial (\chi_s ) \|_{L^1(\RR)} \\
& \lesssim 1 \endaligned \]
uniformly in $s$ and $j$, and that $|\varphi(q)|\geq C_\epsilon 2^{(1-\epsilon)s}$ for $q\geq 2^s$ by (\ref{phi-growth}), we get that
\begin{align} \| \left(  \nu_j^s(\beta - \lambda) \hat{f}(\beta) \right)^{\vee} \|_{\ell^p(\Z)} 
&\lesssim 2^{s(2|\frac{1}{2}-\frac{1}{p}|-(1-\epsilon))} \|f \|_{\ell^p(\Z)}.
\end{align}
Choose $\epsilon>0$ small enough so that 
$2|\frac{1}{2}-\frac{1}{p}|-(1-\epsilon)<0$ (note that $\epsilon$ here depends only on $p$ and not on $s$). This proves (\ref{nt-e1}) for $\nu_j^s$; since, moreover, the bounds we just obtained are summable in $s$, we get (\ref{nt-e1}) for $\nu_j$. The proof of (\ref{nt-e2}) is identical, except that we use the estimate
\[ \aligned
\| \partial ( \widehat{ \psi_j} \chi_s) \|_{\V^2(\RR)} &\leq \| \partial^2 ( \widehat{ \psi_j} \chi_s) \|_{L^1(\RR)} \\
&\leq \| \partial^2 ( \widehat{ \psi_j} ) \chi_s  \|_{L^1(\RR)} + \| \partial ( \widehat{ \psi_j}) \partial( \chi_s) \|_{L^1(\RR)} + \| \widehat{ \psi_j} \partial^2 (  \chi_s) \|_{L^1(\RR)} \\
&\lesssim 2^{j} + 2^j + N^s \\
&\lesssim N^j \endaligned\]
uniformly in $s \geq 0$ such that $2^s < \frac{2^{j-1}}{j^{2\alpha}}$ and $j$.

Next, we bound
\[ \| \left( \partial_\lambda^i m_j(\lambda,\beta) \hat{f}(\beta) \right)^{\vee} \|_{\ell^p(\Z)} \leq \| \partial^i_\lambda K_j(\lambda,n) \|_{\ell^1(\Z)} \|f\|_{\ell^p(\Z)} \lesssim 2^{ji} \|f\|_{\ell^p(\Z)}, \; i =0,1,\]
by Young's convolution inequality and the prime number theorem; both bounds are uniform in $0 \leq \lambda \leq 1$.

Putting everything together, we observe that for each $1 < p < \infty$,
\[ \aligned 
&\sup_\lambda \| \left( (E_j(\lambda,\beta) \hat{f}(\beta) \right)^{\vee} \|_{\ell^p(\Z)} \\
& \qquad \leq
\sup_\lambda \| \left( m_j(\lambda,\beta) \hat{f}(\beta) \right)^{\vee} \|_{\ell^p(\Z)} + \sup_\lambda \| \left( \nu_j(\beta - \lambda) \hat{f}(\beta) \right)^{\vee} \|_{\ell^p(\Z)} \\
& \qquad \lesssim \|f\|_{\ell^p(\Z)}, \endaligned \]
and
\[ \aligned
& \sup_\lambda \| \left( \partial_\lambda E_j(\lambda,\beta)  \hat{f}(\beta) \right)^{\vee} \|_{\ell^p(\Z)} \\
& \qquad \leq
\sup_\lambda \| \left( \partial_\lambda m_j(\lambda,\beta) \hat{f}(\beta) \right)^{\vee} \|_{\ell^p(\Z)} + \sup_\lambda \| \left( \partial_\lambda \nu_j(\beta - \lambda) \hat{f}(\beta) \right)^{\vee} \|_{\ell^p(\Z)} \\
& \qquad \lesssim N^j \|f\|_{\ell^p(\Z)}. \endaligned \]

Next, we use Proposition \ref{app} to improve the first inequality above. For $p = 2$, we have by Proposition \ref{app} 
\[ 
\sup_\lambda \| \left( (E_j(\lambda,\beta) \hat{f}(\beta) \right)^{\vee} \|_{\ell^2(\Z)} \lesssim
 j^{-\alpha/4} \|f\|_{\ell^2(\Z)}.\]
Therefore by interpolation,
\[ \sup_\lambda \| \left( (E_j(\lambda,\beta) \hat{f}(\beta) \right)^{\vee} \|_{\ell^p(\Z)} \lesssim
j^{-\delta_p} \| f \|_{\ell^p(\Z)},\]
where $\delta_p = \delta_p(\alpha) \gg 1$. Note that for a fixed $p$, $\delta_p\to\infty$ as $\alpha\to\infty$.

We are now ready to deduce our error term estimate. We have
\[ \aligned
\| \mathcal{E}f \|_{\ell^p(\Z)} &\leq \sum_{j\geq 2} \| \sup_{\lambda \in \Lambda} \left| \left( E_j(\lambda, \beta) \hat{f}(\beta) \right)^{\vee}\right| \|_{\ell^p(\Z)} \\
&\lesssim \sum_{j\geq2} C_{1/j}\left( j^{-\delta_p} + j^{-\delta_p (1-\frac{1/j}{p})} N^{j \cdot \frac{1/j}{p}} \right) \| f\|_{\ell^p(\Z)}, \endaligned \]
by Lemma \ref{Mink} with $d=1/j$, where
\[ C_{1/j} := \sup_{0<\delta <1} \delta^{1/j} \mathcal{N}(\delta),\]
and $\mathcal{N}(\delta) = \mathcal{N}_\Lambda(\delta)$ is the smallest number of intervals of length $\delta$ needed to cover $\Lambda$.
By our pseudo-lacunarity assumption on $\Lambda$, we know there exists some absolute $M\lesssim 1$ so that
\[ C_{1/j} \lesssim j^M\]
for each $j$. Inserting this estimate into the foregoing leads to a bound on each term in the sum of
\[ j^{M - \delta_p/2};\]
provided that $\alpha$ has been chosen sufficiently large, this sums nicely, proving the lemma.
\end{proof}

\subsection{Reduction to Carleson}
By Lemma \ref{error-est}, we have reduced the proof of Theorem \ref{Main} to estimating the maximal function
\[ \aligned
&\sup_{\lambda \in \Lambda} \left| \left( \sum_{j\geq 2} \nu_j(\beta - \lambda) \hat{f}(\beta) \right)^{\vee} \right|  \\
&\qquad \leq  \sum_{s\geq0} \sup_{\lambda \in \Lambda} \left| \left( \sum_{j : \frac{2^j}{j^{2\alpha}} > 2^{s+1}} \nu_j^s(\beta - \lambda) \hat{f}(\beta) \right)^{\vee} \right| \\
& \qquad=: \sum_{s\geq0} \sup_{\lambda \in \Lambda} \left| \left( \sum_{\R_s} \frac{\mu(q)}{\varphi(q)} \Psi^s(\beta - \lambda - a/q) \chi_s (\beta - \lambda - a/q) \times \hat{f}(\beta) \right)^{\vee} \right|,  \endaligned\]
 in $\ell^p(\Z)$, where we use the notation
\[ \Psi^s(\beta) := \sum_{j :\frac{2^j}{j^{2\alpha}} > 2^{s+1}} \widehat{\psi_j}(\beta).\]
Since for each scale $s \geq 0$ our multipliers are compactly supported inside $\TT$, we may apply Lemma \ref{trans} and view them instead as real-variable multipliers. Indeed, restricting our set of modulation parameters to $\Lambda_T \subset \Lambda$ an arbitrary (finite) $T$-element subset of $\Lambda$, if we knew that
\[ \sup_{\lambda \in \Lambda_T} \left| \left( \sum_{\R_s} \frac{\mu(q)}{\varphi(q)} \Psi^s(\xi - \lambda - a/q) \chi_s (\xi - \lambda - a/q) \times \hat{f}(\xi) \right)^{\vee} \right|\]
were $L^p(\RR)$ bounded with operator norm $A_s$, by invoking Lemma \ref{trans} with $B_1 = \C$ and $B_2 = (\C^T, \| - \|_{\ell^\infty_T})$, we would be able to deduce that the analogously restricted operator on the integers would be similarly $\ell^p(\Z)$ bounded with operator norm $\lesssim A_s$; invoking monotone convergence would yield the result for the unrestricted maximal operator. Here, we used that our set of modulation parameters may be assumed to be countable, since $\Z$ itself is countable.

So motivated, we will shift our perspective to the real-variable setting, with the aim of proving that
\[ A_s \lesssim 2^{-\kappa_p s}\]
for some $\kappa_p > 0$.

Our final reduction in this section is the following.

\begin{proposition}\label{carleson-red-prop}
In order to prove Theorem \ref{Main}, it suffices to prove that, for the same range of exponents $p$, we have
\[
\|\Ca_{\Lambda}^s f \|_{L^p(\RR) } \lesssim 2^{-\kappa_p s}\|f\|_{L^p(\RR)}
\]
for some $\kappa_p > 0$, where
\[ \Ca_{\Lambda}^s f := \sup_{\lambda \in \Lambda} \left| \left( \sum_{\R_s} \frac{\mu(q)}{\varphi(q)} P(\xi - \lambda - a/q) \chi_s (\xi - \lambda - a/q) \times \hat{f}(\xi) \right)^{\vee} \right|.\]
\end{proposition}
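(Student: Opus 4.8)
The reduction isolates inside $\Psi^s$ the only piece that is non‑smooth in the frequency variable $\xi$ — a sharp half‑line cutoff — which reassembles precisely into $\Ca^s_\Lambda$, and disposes of the smooth remainder by the soft machinery of $\S 2$. By the discussion preceding the statement it suffices to establish the real‑variable bound $A_s\lesssim 2^{-\kappa_p s}$. Write $j_s:=\min\{j\geq 2:\ 2^j/j^{2\alpha}>2^{s+1}\}$, so that $\Psi^s=\sum_{j\geq j_s}\widehat{\psi_j}$; the elementary inequality $j_s\leq s+C_\alpha\log(s+2)$ gives $2^{j_s}\lesssim_\alpha 2^s(s+2)^{2\alpha}$, in particular $2^{j_s}\ll N^s$ since $N$ is a fixed large dyadic integer. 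Since $\sum_{j\in\Z}\psi_j=\mathrm{p.v.}\,(1/t)$ and the series $\sum_{j\in\Z}\widehat{\psi_j}(\xi)$ converges for $\xi\neq 0$, one has $\sum_{j\in\Z}\widehat{\psi_j}(\xi)=-i\pi\,\sgn(\xi)$ there; hence, setting $G^s:=\sum_{j<j_s}\widehat{\psi_j}$ and using $\sgn=1-2P$ with $P=1_{\xi<0}$, we obtain the pointwise‑in‑$\xi$ identity
\[ \Psi^s(\xi)=-i\pi\,\sgn(\xi)-G^s(\xi)=2\pi i\,P(\xi)-i\pi-G^s(\xi).\]
Translating by $\lambda+a/q$, multiplying by $\chi_s$, weighting by $\mu(q)/\varphi(q)$, summing over $\R_s$, applying $(\,\cdot\,\widehat f)^{\vee}$, and finally taking $\sup_{\lambda\in\Lambda}$ and absolute values, the triangle inequality gives
\[ \sup_{\lambda\in\Lambda}\Big|\Big(\sum_{\R_s}\tfrac{\mu(q)}{\varphi(q)}\Psi^s(\xi-\lambda-\tfrac aq)\chi_s(\xi-\lambda-\tfrac aq)\widehat f(\xi)\Big)^{\vee}\Big|\ \leq\ 2\pi\,\Ca^s_\Lambda f\ +\ \pi\, I_s f\ +\ II_s f, \]
where $I_s f:=\sup_{\lambda\in\Lambda}\big|\big(\sum_{\R_s}\tfrac{\mu(q)}{\varphi(q)}\chi_s(\xi-\lambda-\tfrac aq)\widehat f(\xi)\big)^{\vee}\big|$ and $II_s f$ is defined the same way with $\chi_s$ replaced by $G^s\chi_s$. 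Since the left side is the operator whose $L^p(\RR)$ norm is $A_s$, it remains (the first term being $\lesssim 2^{-\kappa_p s}\|f\|_{L^p}$ by hypothesis) to bound $I_s$ and $II_s$, for \emph{every} $1<p<\infty$, by $O(2^{-\kappa_p' s})\|f\|_{L^p}$.

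The point is that $\chi_s(\cdot-\lambda-a/q)$ and $(G^s\chi_s)(\cdot-\lambda-a/q)$ are smooth and compactly supported in $\xi$ — all the non‑smoothness of $\Psi^s\chi_s$, namely the jump at $\xi=\lambda+a/q$, has been pushed into $\Ca^s_\Lambda$ — so one may differentiate freely in $\lambda$ and invoke Lemma \ref{Mink}. Both $I_s$ and $II_s$ are handled identically. For a \emph{fixed} $\lambda$ the underlying multiplier is $\sum_{\R_s}c_{a/q}\,h(\xi-\lambda-a/q)$ with $c_{a/q}=\mu(q)/\varphi(q)$ and $h\in\{\chi_s,\,G^s\chi_s\}$: there are $|\R_s|\lesssim 2^{2s}$ terms, $|c_{a/q}|\leq 1/\varphi(q)\lesssim 2^{-(1-\epsilon)s}$ by \eqref{phi-growth} and $q\geq 2^s$, and the bumps are supported on the intervals $\{|\xi-\lambda-a/q|\leq 2cN^{-s-1}\}$, which are pairwise disjoint because distinct fractions in $\R_s$ differ by more than $2^{-2s-2}>4cN^{-s-1}$ (as $N$ is large). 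Granting (see the last paragraph) that $\|(\chi_s)'\|_{L^1(\RR)}\lesssim 1$ and $\|(G^s\chi_s)'\|_{L^1(\RR)}\lesssim 1$, Lemma \ref{K1} gives, uniformly in $\lambda$,
\[ \|T_\lambda\|_{L^p(\RR)\to L^p(\RR)}\ \lesssim\ 2^{-(1-\epsilon)s}\,\big(2^{2s}\big)^{|1/2-1/p|}\ =:\ 2^{-\kappa_0 s},\qquad \kappa_0=\kappa_0(p):=(1-\epsilon)-2\,|\tfrac12-\tfrac1p|, \]
for either fixed‑$\lambda$ operator $T_\lambda$, with $\kappa_0(p)>0$ once $\epsilon$ is chosen small depending only on $p\in(1,\infty)$. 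Differentiating in $\lambda$ replaces $h$ by $h'$, and crudely $\|(\chi_s)'\|_{\V^2}\leq\|(\chi_s)''\|_{L^1}\lesssim N^s$, $\|(G^s\chi_s)'\|_{\V^2}\lesssim_\alpha N^s$, so $\|\partial_\lambda T_\lambda\|_{L^p\to L^p}\lesssim_\alpha 2^{-\kappa_0 s}N^s$. Feeding $a=2^{-\kappa_0 s}$, $A\lesssim_\alpha 2^{-\kappa_0 s}N^s$ and $d=1/s$ into Lemma \ref{Mink} — whereby $C_d=C_{1/s}\lesssim s^M$ by the pseudo‑lacunarity of $\Lambda$ — the factor $N^s$ enters only to the power $d/p=1/(sp)$ and collapses to $N^{1/p}=O(1)$, yielding $\|I_s f\|_{L^p},\,\|II_s f\|_{L^p}\lesssim_{\alpha,p} s^{M/p}\,2^{-\kappa_0 s}\|f\|_{L^p}\lesssim 2^{-\kappa_p' s}\|f\|_{L^p}$ for any $\kappa_p'<\kappa_0(p)$; the finitely many small values of $s$ (in particular $s=0$, where $\R_0=\{0/1\}$ and each operator is dominated by a single smooth‑bump maximal function $|h^{\vee}|*|f|$) are trivial. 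Combining the three estimates and taking $\kappa_p$ slightly smaller gives $A_s\lesssim 2^{-\kappa_p s}$, hence Theorem \ref{Main}.

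It remains to verify that $G^s$, on the support of $\chi_s$, is a smooth symbol with $\|(G^s\chi_s)'\|_{L^1(\RR)}\lesssim 1$ (and $\|(G^s\chi_s)''\|_{L^1(\RR)}\lesssim_\alpha N^s$). One has $\sum_{j<j_s}\psi_j(t)=t^{-1}(1-\beta_s(t))$, where $\beta_s:=t\sum_{j\geq j_s}\psi_j$ is a smooth bump equal to $1$ for $|t|\geq 2^{j_s}$ and to $0$ for $|t|\leq 2^{j_s-2}$, so $G^s=\widehat{\,t^{-1}(1-\beta_s)\,}$ is the Fourier transform of a compactly supported principal‑value distribution, hence $C^\infty(\RR)$, with $\partial_\xi^k G^s=(-2\pi i)^k\widehat{\,t^{k-1}(1-\beta_s)\,}$ for $k\geq1$, whence $\|\partial_\xi^k G^s\|_{L^\infty(\RR)}\lesssim_k 2^{j_s k}$; moreover $G^s(0)=0$ by oddness, so $\|G^s\|_{L^\infty(\{|\xi|\leq 2cN^{-s-1}\})}\lesssim 2^{j_s}N^{-s-1}$. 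Since $N\geq 4$, $2^{j_s}N^{-s}\lesssim_\alpha (s+2)^{2\alpha}(2/N)^s\to 0$, and the Leibniz rule with $\|\chi_s\|_{L^1}\lesssim N^{-s}$, $\|(\chi_s)'\|_{L^1}=\|\chi'\|_{L^1}\lesssim 1$ gives $\|(G^s\chi_s)'\|_{L^1}\lesssim 2^{j_s}N^{-s}\lesssim 1$; the second derivative is analogous.

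I expect the only real work to be bookkeeping: keeping the $N$‑, $s$‑ and $\alpha$‑dependence explicit enough that the $2^{2s|1/2-1/p|}$ loss of Lemma \ref{K1} is beaten by the totient gain $2^{-(1-\epsilon)s}$ (this is what confines \emph{this} step to $1<p<\infty$, and it is harmless since the genuine restriction $\tfrac32<p<4$ enters only through the hypothesis on $\Ca^s_\Lambda$), and that the crude $\lambda$‑derivative blow‑up $N^s$ is neutralized by choosing the Minkowski parameter $d=1/s$ in Lemma \ref{Mink} — the second and last use of pseudo‑lacunarity. No idea beyond the sharp‑cutoff splitting is needed: the time‑frequency content lives entirely in $\Ca^s_\Lambda$, and everything left over is smooth and summably small in $s$.
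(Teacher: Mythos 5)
Your argument is correct and follows the paper's own proof essentially line for line: you ``complete'' $\Psi^s$ to the sharp half-line cutoff $P$ by the identity $\Psi^s = 2\pi i\,P - i\pi - G^s$ (the paper's $\Psi_s$ playing the role of your $G^s$), dispose of the two smooth remainders ($\chi_s$ and $G^s\chi_s$, the paper's $E^s_2$ and $E^s_1$) via Lemma~\ref{K1} combined with Lemma~\ref{Mink} and pseudo-lacunarity, and observe that the constants are geometrically summable in $s$. The only cosmetic divergences are that you bound $G^s$ via the compactly supported physical-space kernel $t^{-1}(1-\beta_s)$ whereas the paper writes $\Psi_s = -\pi i\,\sgn * \bigl(2^{j(s)}\hat\eta(2^{j(s)}\cdot)\bigr)$ and estimates $\|\Psi_s\|_{\V^1}$ directly, and that you feed $d=1/s$ into Lemma~\ref{Mink} while the paper fixes a single small $d>0$; both choices give the same conclusion.
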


\begin{proof}
The idea is to ``complete" $\Psi^s(\xi)$ to 
\[ P(\xi) := 1_{\xi < 0} = \frac{1}{2} - \frac{\sgn(\xi) }{2},\]
and prove that the difference is bounded on $L^p(\RR)$.

To this end, we define 
\[ \Psi_s(\xi) := \sum_{j \in \Z :\frac{2^j}{j^{2\alpha}} \leq 2^{s+1}} \widehat{\psi_j}(\xi),\]
so that
\[ \Psi_s(\xi) + \Psi^s(\xi) = - \pi i \cdot \sgn(\xi).\]
More explicitly, if $1_{[-1/2,1/2]} \leq \eta \leq 1_{[-1,1]}$ is the Schwartz function from $\S 2.1$, and 
\[ j(s) := \max\{ j \in \Z : \frac{2^j}{j^{2\alpha}} \leq 2^{s+1} \},\]
then we may write
\[ \Psi_s(\xi) = -\pi i \sgn * \left( 2^{j(s)}\hat{\eta}(2^{j(s)} -)\right)(\xi) \]
and 
\[\Psi^s(\xi) = -\pi i \sgn *\left( \delta - \left( 2^{j(s)}\hat{\eta}(2^{j(s)} -) \right) \right)(\xi), \]
where $\delta$ is the point mass at the origin.
We consider the multipliers
\[ E^s_1(\lambda,\xi) := \sum_{\R_s} \frac{\mu(q)}{\varphi(q)} \Psi_s(\xi - \lambda - a/q) \chi_s (\xi - \lambda - a/q) \]
and
\[ E^s_2(\lambda,\xi) := \sum_{\R_s} \frac{\mu(q)}{\varphi(q)} \chi_s (\xi - \lambda - a/q). \]

By Lemma \ref{K1}, for each $i= 1,2$, and $1 < p < \infty$, taking the $\epsilon$ in the totient function estimate (\ref{phi-growth}), we may find $\delta_p > 0$ so that
\begin{equation}\label{vec-e30} 
\begin{split}
\sup_{0 \leq \lambda \leq 1} \| \left( E^s_i(\lambda,\xi) \hat{f}(\xi) \right)^{\vee} \|_{L^p(\RR)} &\lesssim 2^{(\epsilon-1)s +2s|1/p-1/2|} \|f\|_{L^p(\RR)}\\
& = 2^{-\delta_p s} \|f\|_{L^p(\RR)}.\\
\end{split}
\end{equation}
Indeed, one takes $|X| = |\R_s| \lesssim 2^{2s}$, and uses that
\[ \aligned 
\| \Psi_s \chi_s \|_{\V^2(\RR)} + \| \chi_s\|_{\V^2(\RR)} &\leq \| \Psi_s \chi_s \|_{\V^1(\RR)} + \| \chi_s \|_{\V^1(\RR)} \\
&\leq \| \Psi_s \|_{\V^1(\RR)} \| \chi_s \|_{L^\infty(\RR)} + \left( \| \Psi_s \|_{L^\infty(\RR)} + 1 \right) \| \chi_s \|_{\V^1(\RR)} \\
&\lesssim 1, \endaligned \]
since 
\[ \| fg \|_{\V^1(\RR)} \leq \| f \|_{\V^1(\RR)} \|g\|_{L^\infty(\RR)} + \|f \|_{L^\infty(\RR)} \| g\|_{\V^1(\RR)},\]
and $\| \Psi_s \|_{\V^1(\RR)}$ may be estimated as follows:
\[ \aligned 
\left| \sum_{i} \Psi_s(\xi_i) - \Psi_s(\xi_{i+1}) \right| &\lesssim \left| \int \left( \sum_{i} \sgn(\xi_i - \zeta) - \sgn(\xi_{i+1} - \zeta) \right) 2^{j(s)} \hat{\eta}(2^{j(s)} \zeta) \ d\zeta \right| \\
&\leq \| \sgn \|_{\V^1(\RR)} \| \hat{\eta} \|_{L^1(\RR)} \\
&\lesssim 1. \endaligned\]
Arguing similarly, one further computes
\[ \aligned
 \sup_{0 \leq \lambda \leq 1} \| \left( \partial_\lambda E^s_i(\lambda,\xi) \hat{f}(\xi) \right)^{\vee} \|_{L^p(\RR)} &\lesssim N^s \cdot 2^{(\epsilon-1)s +2s|1/p-1/2|} \|f\|_{L^p(\RR)} \\
&= N^s \cdot 2^{-\delta_p s} \|f\|_{L^p(\RR)}; \endaligned \]
the key computation here is that for an appropriate finite subsequence $\{ \xi_i \} \subset \RR$
\[ \aligned 
\| \partial \Psi_s \|_{\V^1(\RR)} &\lesssim \left| \int \left( \sum_{i} \sgn(\xi_i - \zeta) - \sgn(\xi_{i+1} - \zeta) \right) 2^{2j(s)} (\partial \hat{\eta})(2^{j(s)} \zeta) \ d\zeta \right| \\
&\leq \| \sgn \|_{\V^1(\RR)} \cdot 2^{j(s)} \| \partial \hat{\eta} \|_{L^1(\RR)} \\
&\lesssim 2^{j(s)}. \endaligned \]

By Lemma \ref{Mink}, taking $d > 0$ sufficiently small, we may estimate the $L^p(\RR)$ operator norm of 
\[ \| \sup_{\lambda \in \Lambda} | \left( E^s_i(\lambda,\xi) \hat{f}(\xi) \right)^{\vee}  \|_{L^p(\RR)} \lesssim
C_d^{1/p}( 2^{-\delta_p s} + 2^{-\delta_p s } N^{s \frac{d}{p}} ) \lesssim 2^{-\delta_p' s},\]
for some $\delta_p > \delta_p' >0$. Since this error is geometrically decaying, and thus summable, in $s \geq 0$, and $P$ is a linear combination of $\Psi_s, \Psi^s$, and the identity function, the proposition follows.
\end{proof}

\section{Completing the Proof}
In the last section we reduced matters to proving that the maximal function $\Ca_{\Lambda}^s f$
is bounded on $L^p(\RR)$ with norm $2^{-\kappa_p s}$ for some $\kappa_p > 0$.

The first move will be to split $\Ca_{\Lambda}^s$ into $\lesssim 2^{\epsilon s}$ separate sub-maximal functions, each of which is closer to the maximal functions considered in $\S 3$. To do this, we introduce a covering of $\Lambda$,
\[ \Lambda \subset \bigcup_{j=1}^{\mathcal{N}(2cN^{-s})} I_j = \bigcup_{j=1}^{\mathcal{N}(2cN^{-s})} [a_j - cN^{-s},a_j + cN^{-s}] \]
using $\mathcal{N}(2cN^{-s}) \lesssim 2^{\epsilon s}$ intervals. By an easy change of variables $\lambda\to\lambda+a_j$, we majorize
\[ 
\Ca_\Lambda^s f  \leq \sum_{j \lesssim 2^{\epsilon s}} 
\sup_{|\lambda| \ll N^{-s}} \left| \left( \sum_{\R_s} \frac{\mu(q)}{\varphi(q)} P(\xi - \lambda - a_j - a/q) \chi_s (\xi - \lambda - a_j - a/q) \times \hat{f}(\xi) \right)^{\vee} \right|.
\]
Note that the shifting $\R_s \ni a/q \to a/q + a_j$ does not affect the separation properties of the $a/q$. The shifted numbers might no longer be in $\mathcal{R}_s$, but this will not be necessary at this stage.

If we let (say)
\[ 1_{[-4c/N,4c/N]} \leq \tilde{\chi}(x) \leq 1_{[-8c/N,8c/N]} \]
be another Schwartz function, and 
\[ \tilde{\chi}_s(x) := \tilde{\chi}(N^s x),\]
for any $s$-separated collection of frequencies $\{\theta_1,\dots,\theta_K\}$, and any $|\lambda| \ll N^{-s}$, we can write the multiplier
\[ \sum_{k=1}^K c_k P(\xi - \lambda -\theta_k) \chi_s(\xi-\lambda -\theta_k) \]
as a product
\[ \sum_{k=1}^K P(\xi - \lambda -\theta_k) \chi_s(\xi-\lambda -\theta_k) \times  \sum_{j=1}^K c_j \tilde{\chi}_s(\xi - \theta_j), \]
since the sums diagonalize by our $s$-separation condition on the frequencies. In particular, if we write
\[ M_{\{c_j\},s} \hat{f}(\xi) := \sum_{j=1}^K c_j \tilde{\chi}_s(\xi-\theta_j) \hat{f}(\xi),\]
we may express
\[
\sup_{|\lambda | \ll N^{-s}} \left| \F^{-1}\left( \sum_{k=1}^K c_k P(\xi - \lambda -\theta_k) \chi_s(\xi-\lambda -\theta_k) \hat{f}(\xi) \right) \right| \]
as
\[ \sup_{|\lambda| \ll N^{-s}} \left| \F^{-1} \left( \sum_{k=1}^K P(\xi - \lambda -\theta_k) \chi_s(\xi-\lambda -\theta_k) \times M_{\{c_j\},s} \hat{f}(\xi) \right) \right|. \]
We will refer to this observation as the ``lifting" trick.

\begin{lemma}\label{L2}
The maximal function $\Ca_{\Lambda}^sf$ obeys
\[
\|\Ca_{\Lambda}^s f\|_{L^2(\RR)} \lesssim_\epsilon  2^{(\epsilon -1)s}\|f\|_{L^2(\RR)}
\]
for any $\epsilon > 0$.
\end{lemma}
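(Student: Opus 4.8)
The plan is to combine the covering of $\Lambda$ recorded just above the lemma with the ``lifting trick'', the multi-frequency Carleson estimate of Proposition \ref{0}, and the totient lower bound (\ref{phi-growth}). Recall that after covering $\Lambda$ by $\mathcal{N}(2cN^{-s})\lesssim_\epsilon 2^{\epsilon s}$ intervals $I_j$ centered at points $a_j$ --- which uses only that $\Lambda$ is pseudo-lacunary, hence $\mathcal{N}(\delta)\lesssim_\epsilon\delta^{-\epsilon}$ --- and shifting $\lambda\mapsto\lambda+a_j$, one has the pointwise majorization
\[
\Ca_\Lambda^s f \;\le\; \sum_{j\lesssim_\epsilon 2^{\epsilon s}}\;\sup_{|\lambda|\ll N^{-s}}\left|\F^{-1}\Big(\sum_{\R_s}\tfrac{\mu(q)}{\varphi(q)}\,P(\xi-\lambda-a_j-a/q)\,\chi_s(\xi-\lambda-a_j-a/q)\,\hat f(\xi)\Big)\right|,
\]
and for each fixed $j$ the frequencies $\{a_j+a/q:a/q\in\R_s\}$ are $s$-separated, since distinct $a/q,a'/q'$ with $2^s\le q,q'<2^{s+1}$ satisfy $|a/q-a'/q'|\ge(qq')^{-1}>2^{-2s-2}$ and a common translation preserves this. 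It therefore suffices to bound each $j$-summand on $L^2(\RR)$ with operator norm $\lesssim_\epsilon s^2\,2^{-(1-\epsilon)s}$: summing over the $\lesssim_\epsilon 2^{\epsilon s}$ values of $j$ and absorbing $s^2 2^{2\epsilon s}\lesssim_\epsilon 2^{3\epsilon s}$, a relabeling of $\epsilon$ then yields the claim.

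Fix $j$. By the lifting trick the $j$-th multiplier factors as
\[
\Big(\sum_{\R_s}P(\xi-\lambda-a_j-a/q)\,\chi_s(\xi-\lambda-a_j-a/q)\Big)\times M_{\{c_q\},s}\hat f(\xi),\qquad M_{\{c_q\},s}\hat f(\xi):=\sum_{\R_s}\tfrac{\mu(q)}{\varphi(q)}\,\tilde\chi_s(\xi-a_j-a/q)\,\hat f(\xi),
\]
so the $j$-summand equals $\sup_{|\lambda|\ll N^{-s}}\big|\F^{-1}\big(\sum_{\R_s}P(\xi-\lambda-a_j-a/q)\,\chi_s(\xi-\lambda-a_j-a/q)\,\widehat{M_{\{c_q\},s}f}(\xi)\big)\big|$. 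Since the $a_j+a/q$ are $s$-separated while $\tilde\chi_s$ is supported at the far finer scale $\sim N^{-s}$, the projections $\tilde\chi_s(\,\cdot\,-a_j-a/q)$ have pairwise disjoint Fourier supports, and Plancherel gives
\[
\|M_{\{c_q\},s}f\|_{L^2(\RR)}\;\lesssim\;\Big(\max_{2^s\le q<2^{s+1}}\varphi(q)^{-1}\Big)\|f\|_{L^2(\RR)}\;\lesssim_\epsilon\;2^{-(1-\epsilon)s}\|f\|_{L^2(\RR)}
\]
by (\ref{phi-growth}).

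It remains to bound the coefficient-free maximal operator $g\mapsto\sup_{|\lambda|\ll N^{-s}}\big|\F^{-1}\big(\sum_{\R_s}P(\xi-\lambda-a_j-a/q)\,\chi_s(\xi-\lambda-a_j-a/q)\,\hat g(\xi)\big)\big|$ on $L^2(\RR)$ with norm $\lesssim\log^2|\R_s|\lesssim s^2$. Applied to $g=M_{\{c_q\},s}f$ this is exactly the operator $M$ of Proposition \ref{0}, with $K=|\R_s|\le 2^{2s+1}$ and the $s$-separated frequencies $a_j+a/q$: writing $\chi_s(\xi-\lambda-\theta)=\phi^\ast\big(N^{s+1}(\xi-\lambda-\theta)\big)$ with $\phi^\ast(u):=\chi(u/N)$, which satisfies $1_{[-c,c]}\le\phi^\ast\le 1_{[-2c,2c]}$, identifies the cutoff with that of Proposition \ref{0} with $s$ replaced by $s+1$ and $\phi$ replaced by $\phi^\ast$, once one also inserts the factor $\varphi(N^{s+1}(\xi-\theta))$, which is identically $1$ on the Fourier support of $\widehat{M_{\{c_q\},s}f}$ near $\theta=a_j+a/q$. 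All scales in play ($N^{-s}$, $N^{-s-1}$, $2^{-2s}$) differ only by the \emph{fixed} constant $N$, so the constraint $|\lambda|\ll N^{-s-1}$ of Proposition \ref{0} is arranged --- if necessary after subdividing the $\lambda$-interval into the fixed number $O(N)$ of pieces, at a cost of an absolute factor. Proposition \ref{0} then supplies the bound $\lesssim\log^2|\R_s|\lesssim s^2$, and combining the three displayed estimates completes the proof.

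The substantive analytic input --- the $\log^2 K$ multi-frequency Carleson bound --- is already in hand via Proposition \ref{0}, so the only real work here is the bookkeeping needed to exhibit each $j$-th operator as a genuine instance of it: verifying the $s$-separation of the shifted Farey fractions $a_j+a/q$, the orthogonality of the localizations $\tilde\chi_s(\,\cdot\,-a_j-a/q)$ defining $M_{\{c_q\},s}$, and the reconciliation of the several cutoff scales and $\lambda$-ranges, all off only by the fixed dilation factor $N$. That reconciliation is the one place where care is needed.
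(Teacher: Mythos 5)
Your proof is correct and takes essentially the same approach as the paper: cover $\Lambda$ by $\lesssim_\epsilon 2^{\epsilon s}$ intervals, apply the lifting trick, bound $M_{\{c_q\},s}$ via Plancherel and the totient estimate (\ref{phi-growth}), and invoke Proposition \ref{0} for the coefficient-free maximal piece, absorbing $\log^2 K\lesssim s^2$ into a further $2^{\epsilon s}$. The extra bookkeeping you supply (verifying $s$-separation of the shifted Farey fractions and reconciling the cutoff scales $\chi_s$, $\tilde\chi_s$, $\varphi(N^s\cdot)$ across the fixed dilation factor $N$) is sound and makes explicit what the paper leaves implicit.
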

\begin{proof}
By the triangle inequality, it is enough to treat the maximal function with $|\lambda | \ll N^{-s}$ and all the frequencies shifted by some $a_j$.

We apply the lifting trick with $\{\theta_1,\dots,\theta_K\}= a_j + \R_s$, and $c_k := \frac{\mu(q)}{\varphi(q)}$ if $\theta_k = a_j + a/q$. The operator norm of the multiplier $M_{\{c_j\},s}$ is $\lesssim 2^{(\epsilon-1)s}$ by Plancherel and (\ref{phi-growth}); the operator norm of the maximal function
\[ \sup_{|\lambda| \ll N^{-s}} \left| \left( \sum_{\R_s} P(\xi - \lambda - a_j - a/q) \chi_s (\xi - \lambda - a_j - a/q) \times \hat{f}(\xi) \right)^{\vee} \right| \]
is bounded by $2^{\epsilon s}$ by Proposition \ref{0}. Replacing $\epsilon$ by $\epsilon/3$ completes the proof.
\end{proof}

\begin{lemma}\label{low}
For $1< p < 2$, the maximal function $\Ca_{\Lambda}^sf$ obeys
\[
\|\Ca_{\Lambda}^sf\|_{L^p(\RR) }\lesssim_\epsilon 2^{(\epsilon -2 + 4/p )s} \|f\|_{L^p(\RR)}
\]
for any $\epsilon > 0$.
\end{lemma}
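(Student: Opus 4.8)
The plan is to re-run the proof of Lemma \ref{L2} with the exponent $2$ replaced by $p\in(1,2)$ throughout: Proposition \ref{2} takes the role of Proposition \ref{0}, and (the dual of) Rubio de Francia's inequality \cite{R} takes the role of Plancherel. First I would invoke the triangle inequality and the covering $\Lambda\subset\bigcup_{j\lesssim 2^{\epsilon s}}[a_j-cN^{-s},a_j+cN^{-s}]$ fixed just before Lemma \ref{L2}, so that it suffices to prove, uniformly in $j$, that
\[ T_jf:=\sup_{|\lambda|\ll N^{-s}}\Big|\F^{-1}\Big(\sum_{\R_s}\frac{\mu(q)}{\varphi(q)}\,P(\xi-\lambda-a_j-a/q)\,\chi_s(\xi-\lambda-a_j-a/q)\,\hat f(\xi)\Big)\Big| \]
has $L^p(\RR)$ operator norm $\lesssim 2^{(\epsilon-2+4/p)s}$; summing the $\lesssim 2^{\epsilon s}$ pieces and relabelling $\epsilon$ then yields the stated bound.

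Next I would apply the ``lifting trick'' with the $s$-separated family $\{\theta_1,\dots,\theta_K\}=a_j+\R_s$ (so $K=|\R_s|\lesssim 2^{2s}$) and coefficients $c_k:=\mu(q)/\varphi(q)$ when $\theta_k=a_j+a/q$, writing $T_j$ as the composition of $M_{\{c_k\},s}$ followed by the unit-coefficient multi-frequency Carleson operator of $\S 3$ built from $P\chi_s$ and the frequencies $\theta_k$; as in Lemma \ref{L2}, the latter is estimated by Proposition \ref{2}, giving $L^p$ operator norm $\lesssim K^{\theta(p)}=K^{1/p}\lesssim 2^{2s/p}$ for $1<p\le 2$. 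It then remains to bound $M_{\{c_k\},s}\hat f=\sum_k c_k\tilde\chi_s(\cdot-\theta_k)\hat f$ on $L^p$ by $\lesssim \sup_k|c_k|\cdot K^{1/p-1/2}$. Since $N$ is large enough that the supports of the $\tilde\chi_s(\cdot-\theta_k)$ are pairwise disjoint, the dual form of Rubio de Francia's inequality \cite{R} gives $\|\sum_k c_kg_k\|_{L^p}\lesssim\big\|\big(\sum_k|c_kg_k|^2\big)^{1/2}\big\|_{L^p}\le\sup_k|c_k|\cdot\big\|\big(\sum_k|g_k|^2\big)^{1/2}\big\|_{L^p}$ with $g_k:=\big(\tilde\chi_s(\cdot-\theta_k)\hat f\big)^\vee$, while $\big\|\big(\sum_k|g_k|^2\big)^{1/2}\big\|_{L^p}\lesssim K^{1/p-1/2}\|f\|_{L^p}$ for $1<p\le 2$ follows by interpolating the trivial $L^2$ bound against the $L^1\to L^{1,\infty}$ bound of norm $\lesssim K^{1/2}$ from \cite[Proposition 4.3]{K} --- exactly the estimate already used in the proof of Proposition \ref{2}. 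Combined with $\sup_k|c_k|=\sup_{2^s\le q<2^{s+1}}|\mu(q)|/\varphi(q)\lesssim 2^{-(1-\epsilon)s}$, which comes from (\ref{phi-growth}), this gives $\|M_{\{c_k\},s}\|_{L^p\to L^p}\lesssim 2^{(\epsilon-2+2/p)s}$. Multiplying the two factors yields $\|T_j\|_{L^p\to L^p}\lesssim 2^{2s/p}\cdot 2^{(\epsilon-2+2/p)s}=2^{(\epsilon-2+4/p)s}$, as needed.

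The step I expect to be the main obstacle is the $L^p$ bound on $M_{\{c_k\},s}$. One cannot afford the Cauchy--Schwarz step in the index $k$ that is used in the proof of Proposition \ref{2}: there the coefficients equal $1$ and there is a supremum to control, so the accompanying factor $K^{1/2}$ is harmless, but here it would weaken the bound on each $T_j$ by a factor $K^{1/p-1/2}=2^{(2/p-1)s}$ and destroy the target exponent. The point is that $M_{\{c_k\},s}$ carries no supremum, so the frequency-disjoint pieces may instead be reassembled via the \emph{dual} Rubio de Francia inequality, which is a clean $L^p$ estimate for $1<p<2$ with no loss, leaving only the (unavoidable) $K^{1/2}$-type weak-$(1,1)$ cost for the Rubio de Francia square function. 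The remaining bookkeeping --- disjointness of the bump supports for $N$ large, invariance of $s$-separation under the shift $a/q\mapsto a/q+a_j$, and the exponent arithmetic --- is routine and parallels Lemma \ref{L2}.
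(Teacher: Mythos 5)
Your proof is correct and follows essentially the same path as the paper's: cover $\Lambda$ by $\lesssim 2^{\epsilon s}$ shifted blocks, apply the lifting trick, bound the unit-coefficient multi-frequency Carleson piece by $K^{1/p}\lesssim 2^{2s/p}$ via Proposition~\ref{2}, and bound $M_{\{c_k\},s}$ by $\sup_k|c_k|\cdot K^{1/p-1/2}\lesssim 2^{(\epsilon-2+2/p)s}$. The only variation is that for the latter bound the paper simply invokes Lemma~\ref{K1} as in (\ref{vec-e30}), whereas you rederive that same estimate explicitly from dual Rubio de Francia plus interpolation against the $L^1\to L^{1,\infty}$ bound of \cite[Proposition 4.3]{K}; these are the ingredients behind Lemma~\ref{K1} anyway, so the route is identical in substance and yields the same exponent.
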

\begin{proof}
By the triangle inequality, it is again enough to treat the maximal function with $|\lambda | \ll N^{-s}$ and all the frequencies shifted by some $a_j$.

We apply the previous lifting trick in the same context, with $\{\theta_1,\dots,\theta_K\}= a_j + \R_s$, and $c_k := \frac{\mu(q)}{\varphi(q)}$ if $\theta_k = a_j + a/q$. The operator norm of the $M_{\{c_j\},s}$ is $\lesssim 2^{(\epsilon-1)s}\cdot 2^{2s(\frac{1}{p} - \frac{1}{2})}$ by an application of Lemma \ref{K1} as in the proof of (\ref{vec-e30}); the operator norm of the maximal function
\[ \sup_{|\lambda| \ll N^{-s}} \left| \left( \sum_{\R_s} P(\xi - \lambda - a_j - a/q) \chi_s (\xi - \lambda - a_j - a/q) \times \hat{f}(\xi) \right)^{\vee} \right| \]
is bounded by $2^{2 s/p}$ by Proposition \ref{2}. Replacing $\epsilon$ by $\epsilon/2$ completes the proof.
\end{proof}

\begin{lemma}\label{high}
For $p > 2$, the maximal function $\Ca_{\Lambda}^sf$ obeys
\[
\|\Ca_{\Lambda}^sf\|_{L^{p}(\RR)} \lesssim_\epsilon 2^{(\epsilon + 1- 2/p)s} \|f\|_{L^{p}(\RR)}
\]
for any $\epsilon > 0$.
\end{lemma}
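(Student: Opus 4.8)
The plan is to run the same two-ingredient argument already used for Lemmas \ref{L2} and \ref{low}, now inserting the $p>2$ endpoints of the two auxiliary estimates. First I would invoke the triangle inequality over the covering $\Lambda \subset \bigcup_{j=1}^{\mathcal{N}(2cN^{-s})} [a_j - cN^{-s}, a_j+cN^{-s}]$ to reduce to a single sub-maximal function, at the cost of the factor $\mathcal{N}(2cN^{-s})$; since pseudo-lacunary sets have $C_d < \infty$ for every $d > 0$, we have $\mathcal{N}(2cN^{-s}) \lesssim_\epsilon 2^{\epsilon s}$, so (after a change of variables $\lambda \mapsto \lambda + a_j$) it suffices to bound, for each fixed $a_j$,
\[ \sup_{|\lambda| \ll N^{-s}} \left| \left( \sum_{\R_s} \frac{\mu(q)}{\varphi(q)} P(\xi - \lambda - a_j - a/q)\, \chi_s(\xi - \lambda - a_j - a/q)\, \hat f(\xi) \right)^{\vee} \right| \]
on $L^p(\RR)$ with norm $\lesssim 2^{(\epsilon + 1 - 2/p)s}$ after relabeling $\epsilon$.

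Next I would apply the lifting trick with $\{\theta_1,\dots,\theta_K\} = a_j + \R_s$ — an $s$-separated collection, since $\R_s$ is — and coefficients $c_k = \mu(q)/\varphi(q)$, which factors the multiplier above through the product of the symbol multiplier $M_{\{c_j\},s}$ and the ``Carleson piece'' $\sup_{|\lambda|\ll N^{-s}}|(\sum_{\R_s} P(\xi-\lambda-a_j-a/q)\chi_s(\xi-\lambda-a_j-a/q)\hat f(\xi))^\vee|$. For $M_{\{c_j\},s}$ I would argue exactly as in the derivation of (\ref{vec-e30}): Lemma \ref{K1} with $|X| = |\R_s| \lesssim 2^{2s}$, $\sup_k |c_k| \lesssim 2^{(\epsilon-1)s}$ by (\ref{phi-growth}), and $\|\tilde\chi_s(\cdot - \theta_k)\|_{\V^2(\RR)} \lesssim 1$ yields operator norm $\lesssim 2^{(\epsilon-1)s}\, 2^{2s|1/2 - 1/p|} = 2^{(\epsilon - 2/p)s}$, the last step using $p > 2$. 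For the Carleson piece I would use Proposition \ref{2}, which for $p \geq 2$ has $\theta(p) = \tfrac12$, so with $K = |\R_s| \lesssim 2^{2s}$ its $L^p(\RR)$ norm is $\lesssim K^{1/2} = 2^s$.

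Composing the two bounds through the lifting trick gives each sub-maximal function an $L^p(\RR)$ norm $\lesssim 2^{(\epsilon-2/p)s}\cdot 2^s = 2^{(\epsilon + 1 - 2/p)s}$; multiplying by the $\lesssim_\epsilon 2^{\epsilon s}$ sub-maximal functions and then replacing $\epsilon$ by $\epsilon/2$ finishes the proof. I do not expect a genuine obstacle here, since every ingredient is already in hand and the structure is dictated by Lemmas \ref{L2} and \ref{low}; the only point deserving attention is that the lifting trick's diagonalization is legitimate — this holds because the $\theta_k$ are $s$-separated while $\tilde\chi_s$ lives at scale $N^{-s} \ll 2^{-2s}$, so the translated bumps $\tilde\chi_s(\cdot - \theta_k)$ have pairwise disjoint supports — and that Proposition \ref{2} (unlike Proposition \ref{0}) requires no separation hypothesis, so it applies directly with $K = |\R_s|$. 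The resulting exponent $1-2/p$ is exactly what is needed to interpolate against Lemma \ref{L2} and push the range of $p$ in Theorem \ref{Main} up to $4$.
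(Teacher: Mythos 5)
Your proof is correct and follows the paper's route exactly: the paper's proof of Lemma~\ref{high} is a one-line pointer to Lemma~\ref{low} with the two relevant operator-norm exponents swapped, and your spelled-out computation — covering of $\Lambda$ into $\lesssim_\epsilon 2^{\epsilon s}$ pieces, the lifting trick, $\|M_{\{c_j\},s}\|_{L^p \to L^p} \lesssim 2^{(\epsilon-1)s}\cdot 2^{2s(1/2-1/p)} = 2^{(\epsilon - 2/p)s}$ from Lemma~\ref{K1} and (\ref{phi-growth}), and $\lesssim K^{1/2} = 2^s$ for the Carleson piece from Proposition~\ref{2} with $\theta(p)=1/2$ — reproduces precisely those two exponents and combines them as intended. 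No discrepancy.
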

\begin{proof}
The argument is the same as in Lemma \ref{low}, except that the operator norm of $M_{\{c_j\},s}$ is $\lesssim 2^{(\epsilon-1)s}\cdot 2^{2s(\frac{1}{2} - \frac{1}{p})}$, and the operator norm of the maximal function is bounded by $2^{s}$.
\end{proof}

Interpolating between Lemma \ref{L2} and Lemma \ref{low}, with $\epsilon$ small enough depending on $p$, proves that there exists $\kappa_p > 0$ so that
\[ \| \Ca_{\Lambda}^sf \|_{L^p(\RR)} \lesssim 2^{- \kappa_p s} \|f\|_{L^p(\RR)} \]
provided $\frac{3}{2} < p \leq 2$. Simillarly, interpolating between Lemma \ref{L2} and Lemma \ref{high} we see that there exists $\kappa_p > 0$ so that
\[ \| \Ca_{\Lambda}^sf \|_{L^p(\RR)} \lesssim 2^{- \kappa_p s} \|f\|_{L^p(\RR)} \]
provided $2 \leq p < 4$.

Summing at last over $s \geq 0$ completes the proof of Theorem \ref{Main}.

\end{document}